\definecolor{link_red}{rgb}{0.7,0,0}
\definecolor{cite_blue}{rgb}{0,0,0.97}
\theoremstyle{plain}
\newtheorem{theorem}{Theorem}[section]
\newtheorem{corollary}[theorem]{Corollary}
\newtheorem{lemma}[theorem]{Lemma}
\newtheorem{proposition}[theorem]{Proposition}
\newtheorem{m_theorem}{Theorem}
\theoremstyle{definition}
\newtheorem{definition}[theorem]{Definition}
\newtheorem{remark}[theorem]{Remark}
\newcommand{\beq}{\begin{equation}}
\newcommand{\eeq}{\end{equation}}
\newcommand{\beqn}{\begin{equation*}}
\newcommand{\eeqn}{\end{equation*}}
\newcounter{arealm}
\newcounter{grealm}
\newcounter{irealm}
\newcounter{mrealm}
\newcounter{sarealm}
\renewcommand{\thearealm}{A\arabic{arealm}}
\renewcommand{\thegrealm}{H\arabic{grealm}}
\renewcommand{\theirealm}{I\arabic{irealm}}
\renewcommand{\themrealm}{M\arabic{mrealm}}
\renewcommand{\thesarealm}{SA\arabic{sarealm}}
\numberwithin{equation}{section}
\begin{document}

\title{From limit cycles to strange attractors}

\author{William Ott}
\address[William Ott]{Department of Mathematics, University of Houston, Houston, 
TX 77204-3008, USA.}
\email{ott@math.uh.edu}
\urladdr{http://www.cims.nyu.edu/$\sim$ott}

\author{Mikko Stenlund}
\address[Mikko Stenlund]{
Courant Institute of Mathematical Sciences\\
New York, NY 10012, USA; Department of Mathematics and Statistics, P.O. Box 68, Fin-00014
University of Helsinki, Finland.} 
\email{mikko@cims.nyu.edu}
\urladdr{http://www.math.helsinki.fi/mathphys/mikko.html}

\keywords{limit cycle, periodic forcing, rank one map, shear, SRB measure, strange attractor}

\subjclass[2000]{37D25, 37D45}

\date{\today}

\begin{abstract}
  We define a quantitative notion of shear for limit cycles of flows.  We prove that
  strange attractors and SRB measures emerge when systems exhibiting limit cycles with
  sufficient shear are subjected to periodic pulsatile drives.  The strange attractors
  possess a number of precisely-defined dynamical properties that together imply chaos
  that is both sustained in time and physically observable.
\end{abstract}

\maketitle

\subsection*{Acknowledgements}
Mikko Stenlund was partially supported by the Academy of Finland. William Ott has 
been partially supported by NSF grant DMS-0603509.

\section{Introduction}\label{s:intro}

This paper is about a mechanism for producing chaos: shear.  We are guided by the idea
that in the presence of shear, a stable dynamical structure can be transformed into a
strange attractor with strong stochastic properties by forcing the structure with a
pulsatile drive.  The forcing does not overwhelm the intrinsic dynamics.  Instead, it acts
as an amplifier, amplifying the effects of the intrinsic shear.  We focus on one
particular dynamical structure of great importance: the limit cycle.  Limit cycles are
asymptotically stable periodic orbits of flows on Riemannian manifolds.

The application of a periodic pulsatile drive to a flow exhibiting a limit cycle causes
deformations to occur.  If shear is present in a neighborhood of the limit cycle, if the
limit cycle only weakly attracts nearby orbits, and if the time between pulses (the
relaxation time) is sufficiently large, then stretch-and-fold geometry emerges in a
neighborhood of the limit cycle.  Stretch-and-fold geometry suggests that chaotic behavior
that is both sustained in time and observable {\itshape may} exist.  We prove that such
chaotic behavior does exist in a certain parameter regime for {\itshape any} (generic)
forcing function if the shear is sufficiently strong.  Moreover, we define a quantity
called the shear integral that quantifies the amount of shear that is present in the
intrinsic flow in a neighborhood of the limit cycle.  We emphasize that the shear integral
depends only on the intrinsic system and not on the external forcing.  Our result is the
first of its kind for general limit cycles.  Wang and Young~\cite{WqYls2002, WqYls2003}
obtain results of a similar flavor for supercritical Hopf bifurcations and certain linear
models.

The search for and analysis of stochastic behavior in deterministic dynamical systems have
played a major role in guiding dynamical systems research.  We discuss a few relevant
developments.  The theory of uniformly hyperbolic systems is well-developed.  Let $M$ be a
compact Riemannian manifold and let $f : M \to M$ be a $C^{2}$ diffeomorphism of $M$.  An
attractor for $f$ is a compact set $\Om$ satisfying $f(\Om)=\Om$ for which there exists an
open set $U \subset M$ (the basin) such that $f(\bar{U}) \subset U$ and $\Om =
\bigcap_{i=0}^{\infty} f^{i} (\bar{U})$.  An attractor $\Om$ is said to be an Axiom A
attractor if the tangent bundle over $\Om$ splits into $2$ $Df$-invariant subbundles
$E^{s}$ and $E^{u}$ such that vectors in $E^{s}$ are contracted by $Df$ and vectors in
$E^{u}$ are expanded by $Df$ (we assume $E^{u}$ is nontrivial).  An Axiom A attractor
supports a special invariant measure known as a Sinai-Ruelle-Bowen (SRB) measure that
describes the asymptotic distribution of the orbit of almost every point in $U$ with
respect to Riemannian volume and has strong stochastic properties.  In this sense, the
chaotic behavior associated with Axiom A systems is observable.  It is also sustained in
time because of the presence of positive Lyapunov exponent(s).  One can, in principle,
detect the presence of uniform hyperbolicity in a given system by finding invariant cone
families with suitable properties.  For example, Tucker uses this approach to prove that
the Lorenz equations are chaotic for the classical parameter values studied by
Lorenz~\cite{Tw2002}.

Many systems of interest in the biological and physical sciences display some form of
hyperbolicity but are not uniformly hyperbolic.  A mature theory of nonuniform
hyperbolicity has emerged over the last $4$ decades.  However, the following problem
remains a challenge.  Given a dynamical system (or a parametrized family of dynamical
systems), how can nonuniform hyperbolicity be detected?  Numerical techniques include the
calculation of Lyapunov exponents and the $0$-$1$ test~\cite{FiGgMiWk2007, GgMi2004}.
This paper addresses the analytical component of the problem in the context of limit
cycles.  Our proofs are based on the recently-developed theory of rank one
maps~\cite{WqYls2001, WqYls2008}.  Rank one theory is based on the ideas of
Jakobson~\cite{Jm1981}, Benedicks and Carleson~\cite{BmCl1985, BmCl1991}, and
Young~\cite{Yls1998, Yls1999}.  Rank one theory provides checkable conditions that imply
the existence of SRB measures with strong stochastic properties in parametrized families
of diffeomorphisms.

We conclude the introduction with a remark that the results obtained in this paper are
in some sense dual to the phenomenon known as self-induced stochastic resonance (SISR)
(see {\itshape e.g.}~\cite{DlVEeMc2005}).  Our results demonstrate that certain intrinsic
characteristics of a deterministic system (shear) can produce stochastic-type behavior
when the system is forced in a deterministic way.  SISR demonstrates that underlying phase
space structures can produce deterministic (coherent) behavior in stochastically-forced
systems when the noise level is taken to $0$ along certain distinguished limits.

\section{Statement of results}\label{s:state_res}

We state the main results and discuss their relationship to the existing literature.  Let
$\mbi{f} : \mbb{R}^{n} \to \mbb{R}^{n}$ be a $C^{5}$ vector field and consider the
differential equation
\begin{equation}\label{e:intrinsic}
\tdf{\mbi{x}}{t} = \mbi{f} (\mbi{x}).
\end{equation}
We assume that~\eqref{e:intrinsic} admits an asymptotically stable hyperbolic periodic
solution $\bsym{\et}$ of length $L$ and period $p_{0}$.  Let $\bsym{\ga} : \mbb{R} \to
\mbb{R}^{n}$ be a function of the parameter $s$ that parametrizes $\bsym{\et}$ by length.
Define $\Ga = \{ \bsym{\ga} (s) : s \in [0, L) \}$.  Solutions to~\eqref{e:intrinsic} that
begin sufficiently close to $\Ga$ will converge to $\Ga$ at an exponential rate as $t \to
\infty$.  We are interested in the effects of adding periodic pulsatile forcing to the
vector field defining~\eqref{e:intrinsic}.  For $0 < \rh < T$, define the periodic
function $P_{\rh,T} : \mbb{R} \to \mbb{R}$ as follows.  For $0 \leqs t \leqs T$, set
\begin{equation*}
P_{\rh,T} (t) =
\begin{cases}
1, &\text{if } 0 \leqs t \leqs \rh\\
0, &\text{if } \rh < t < T
\end{cases}
\end{equation*}
and then extend periodically to all $t \in \mbb{R}$ by requiring $P_{\rh,T} (t+T) =
P_{\rh,T} (t)$.  We study the externally-forced system
\begin{equation}\label{e:forced}
\tdf{\mbi{x}}{t} = \mbi{f} (\mbi{x}) + \ve P_{\rh,T} (t) \mbi{F} (\mbi{x}) 
\end{equation}
where $\mbi{F} : \mbb{R}^{n} \to \mbb{R}^{n}$ is a $C^{4}$ vector field and the parameter $\ve > 0$
controls the amplitude of the forcing.  Notice that the right side of~\eqref{e:forced} is not
continuous.

In Section~\ref{s:singular_lim} we compute a normal form of equation~\eqref{e:forced} that
is valid in a tubular neighborhood $\tilde{M} \approx \Ga \times D$, where $D$ is a closed
disk in $\mbb{R}^{n-1}$ of sufficiently small radius.  We are interested in the dynamics
of~\eqref{e:forced} in the tubular neighborhood $M \approx \Ga \times \frac{1}{2} D$.
Since the external forcing is periodic with period $T$, it is natural to study the
time-$T$ map induced by~\eqref{e:forced}.  We write the time-$T$ map as the composition of
a {\bfseries \itshape kick map} $H_{k} : M \to \tilde{M}$ and a {\bfseries \itshape
  relaxation map} $H_{r} : \tilde{M} \to \interior (M)$.  Let $H_{k}$ be the time-$\rh$
map induced by the flow associated with~\eqref{e:forced}.  Notice that the external
forcing is active during the kick phase because $P_{\rh, T} (t) = 1$ for $0 \leqs t \leqs
\rh$.  For $\ve$ sufficiently small, $H_{k}$ maps $M$ into $\tilde{M}$ diffeomorphically.
Let $H_{r}$ be the time-$(T - \rh)$ map induced by~\eqref{e:forced} with $\ve$ set to $0$.
There exists $T_{0} = T_{0} (\ve)$ such that if $T \geqs T_{0}$, then $H_{r}$ maps
$\tilde{M}$ into $\interior (M)$.  The composition $G_{T} \defas H_{r} \circ H_{k}$ is the
time-$T$ map induced by~\eqref{e:forced}.

The dynamical properties of $G_{T} : M \to \interior (M)$ depend on a number of factors.
One feature common to every map $G_{T}$ for $T \geqs T_{0}$ is the existence of an
attractor $\Om$ defined by
\begin{equation*}
\Om = \bigcap_{i=0}^{\infty} G_{T}^{i} (M).
\end{equation*}
We call $U \defas \interior (M)$ the basin of attraction of $\Om$.  For every $\mbi{x} \in
U$, $G_{T}^{i} (\mbi{x}) \to \Om$ as $i \to \infty$.  Two characteristics of the intrinsic
system~\eqref{e:intrinsic} play a key role in determining the structure of $\Om$ and the
dynamical properties of $G_{T}$: shear and the strength of the limit cycle.  We quantify
these notions momentarily; for now, imagine that~\eqref{e:intrinsic} exhibits strong shear
in $M$ if for most points $\mbi{x} \in \Ga$, the velocity vector $\mbi{f} (\hat{\mbi{x}})$
varies substantially as $\hat{\mbi{x}}$ moves away from $\mbi{x}$ in directions orthogonal
to the limit cycle $\Ga$.  Think of the limit cycle $\Ga$ as strongly stable if solutions
to~\eqref{e:intrinsic} that begin in $M$ converge quickly to $\Ga$.  If the shear is weak
and the limit cycle is strongly stable, then the attractor $\Om$ associated with $G_{T}$
will be an invariant closed curve.  We are interested in the opposite situation.  Suppose
that the shear is strong in $M$ and the limit cycle is weakly stable.  The addition of the
periodic pulsatile external force $\ve P_{\rh, T} (t) \mbi{F} (\mbi{x})$ will amplify the
effect of the shear in the following way: disturbances that are created when $P_{\rh, T} =
1$ will be stretched during the relaxation period (when $P_{\rh, T} = 0$).  The stretching
effect increases in intensity as $T$ increases.  If $T$ is large, then folds will be
created in the phase space.  If $G_{T}$ exhibits stretch-and-fold geometry, then $G_{T}$
potentially exhibits chaotic behavior that is sustained in time and observable.

This paper aims to accomplish the following.
\begin{enumerate}[(1)]
\item
We define a computable quantity called the shear integral that quantifies the shear
associated with the intrinsic system~\eqref{e:intrinsic} near the limit cycle $\Ga$.
\item We prove that if the magnitude of the shear integral is sufficiently large and if
  the contraction near the limit cycle $\Ga$ is sufficiently weak, then the following
  holds for suitable values of $\ve$.  For a typical external vector field $\mbi{F}$,
  there exists $T_{1} > 0$ and a set $\De \subset [T_{1}, \infty)$ of positive Lebesgue
  measure such that for $T \in \De$, the time-$T$ map $G_{T}$ associated
  with~\eqref{e:forced} admits a strange attractor $\Om$ and exhibits chaos that is
  sustained in time and observable.
\end{enumerate}
The quantity $T_{1}$ satisfies $T_{1} \gg \rh$, ensuring sufficient relaxation time for
the stretch-and-fold geometry to emerge.  The term {\bfseries \itshape strange attractor}
refers to a number of precisely defined dynamical and structural properties that represent
sustained, observable chaos.  For $T \in \De$, $\Om$ supports a unique ergodic SRB measure
$\nu$.  Here the term SRB measure refers to a measure $\nu$ with a positive Lyapunov
exponent $\nu$ almost everywhere and whose conditional measures on
unstable manifolds are absolutely continuous with
respect to Riemannian volume on these manifolds.  The SRB measure $\nu$
satisfies the central limit theorem and exhibits exponential decay of correlations for
H\"{o}lder continuous observables.  For Lebesgue almost every $\mbi{x}$ in the basin of
attraction $U$, the orbit of $\mbi{x}$ has a positive Lyapunov exponent and is
asymptotically distributed according to $\nu$ in the sense that for every continuous
function $\vp : U \to \mbb{R}$, we have
\begin{equation}\label{e:time_space}
\lim_{m \to \infty} \frac{1}{m} \sum_{i=0}^{m-1} \vp (G_{T}^{i} (\mbi{x})) = \int \vp \, d
\nu.
\end{equation}
Notice that this statement is substantially stronger than the conclusion of the Birkhoff
ergodic theorem.  The Birkhoff ergodic theorem implies that~\eqref{e:time_space} holds for
$\nu$ almost every $\mbi{x}$.  However, $\nu$ is singular with respect to Lebesgue measure
(supported on a set of Lebesgue measure zero) because the dynamics are dissipative.  We
prove that~\eqref{e:time_space} holds for Lebesgue almost every $\mbi{x} \in U$.
See~\bpref{li:sa1}--\bpref{li:sa4} in Section~\ref{s:r1t} for a more precise description
of the dynamical properties of $G_{T}$ for $T \in \De$.

We now define the shear integral.  In Section~\ref{s:singular_lim} we derive a normal form
of~\eqref{e:intrinsic} that is valid in $\tilde{M}$.  The normal form, expressed in the
natural $(s, \mbi{z})$-coordinates introduced in Section~\ref{ss:nf_der}, is given by
\begin{align}
\sublabon{equation}
\tdf{t}{s} &= \norm{\mbi{f} (\bsym{\ga} (s))}^{-1} + \inproa{\bsym{\be} (s)}{\mbi{z}}
+ \om_{1} (s, \mbi{z})
\label{e:nf_intrin_t}\\
\tdf{\mbi{z}}{s} &= \msf{A} \mbi{z} + \bsym{\om}_{2} (s, \mbi{z})
\label{e:nf_intrin_z}
\end{align}
\sublaboff{equation}%
Here $\inproa{\cdot}{\cdot}$ denotes the inner product on $\mbb{R}^{n-1}$.  Functions
depending on $s$ in~\eqref{e:nf_intrin_t}--\eqref{e:nf_intrin_z} are periodic in $s$ with
period $2L$.  The matrix $\msf{A}$ is in Jordan canonical form.  The functions $\om_{1}$
and $\bsym{\om}_{2}$ represent higher order corrections.  The function $\bsym{\be}$ gives
the pointwise magnitude and direction of the shear.  Define the {\bfseries \itshape shear
  integral} $\bsym{\Si}$ by
\begin{equation*}
\bsym{\Si} = (\Si_{1}, \ldots, \Si_{n-1}) \defas \int_{0}^{2L} \bsym{\be} (\ta) \, d \ta
\end{equation*}
and define the {\bfseries \itshape shear factor} $\si$ by $\si \defas \norm{\bsym{\Si}}$.

Having defined the shear integral, we describe the setting of the main theorem.  We
identify intrinsic parameters (parameters associated with $\mbi{f}$) and external
parameters (parameters associated with the external forcing).  We fix the normalized shear
vector $\frac{\bsym{\Si}}{\si}$ and view the shear factor $\si$ as the first intrinsic
parameter.  The second intrinsic parameter quantifies the strength of the contraction near
the limit cycle and is derived from $\msf{A}$.  We assume for the sake of simplicity that
$\msf{A}$ is a diagonal matrix given by $\msf{A} = \diag (\la_{1}, \ldots, \la_{n-1})$
where $0 > \la_{1} \geqs \la_{2} \geqs \cdots \geqs \la_{n-1}$ are the eigenvalues of
$\msf{A}$.  We fix the eigenvalue ratios $\mu_{i} = \frac{\la_{1}}{\la_{i}}$ for $1 \leqs
i \leqs n-1$ and we view the weakest eigenvalue $\la_{1}$ as an intrinsic parameter.  The
only external parameter is $\ve$, the factor that controls the amplitude of the external
forcing.  We fix $\rh > 0$.  A key parameter derived from $\ve$, $\si$, and $\la_{1}$ is
the {\bfseries \itshape hyperbolicity factor} $\frac{\ve \si}{|\la_{1}|}$.

One additional ingredient is needed.  Even if $\si$ is large and $|\la_{1}|$ is small, a
strange attractor cannot emerge unless the forcing $\mbi{F}$ acts in direction(s) in which
shear is present.  We express this idea by introducing a certain function on the circle
$\mbb{S} \defas \frac{\mbb{R}}{2L \mbb{Z}}$.  We identify $\mbb{S}$ with the interval
$[0,2L)$.  In Section~\ref{s:singular_lim} we derive a normal form of the forced
system~\eqref{e:forced} that is valid in $\tilde{M}$ when the forcing is active ($P_{\rh,
  T} = 1$):
\begin{align}
\sublabon{equation}
\tdf{t}{s} &= \norm{\mbi{f} (\bsym{\ga} (s))}^{-1} + \inproa{\bsym{\be} (s)}{\mbi{z}}
+ \om_{3} (s, \mbi{z})
\label{e:nf_force_t}\\
\tdf{\mbi{z}}{s} &= \msf{A} \mbi{z} + \ve \bsym{\ze} (s) + \bsym{\om}_{4} (s, \mbi{z})
\label{e:nf_force_z}
\end{align}
\sublaboff{equation}%
Functions depending on $s$ in~\eqref{e:nf_force_t}--\eqref{e:nf_force_z} are periodic in
$s$ of period $2L$.  The functions $\om_{3}$ and $\bsym{\om}_{4}$ are higher order
corrections.  The function $\bsym{\ze}$ is related to the projection of $\mbi{F}$ in
directions orthogonal to $\Ga$.  For $s_{0} \in \mbb{S}$, define $\tilde{s}$ implicitly by
\begin{equation*}
\rh = \int_{s_{0}}^{\tilde{s}} \norm{\mbi{f} (\bsym{\ga} (\ta))}^{-1} \, d \ta.
\end{equation*}
Define the vector
\begin{equation*}
\mbi{d} \defas \left( \frac{\Si_{i} \mu_{i}}{\si} \right)_{i=1}^{n-1}
\end{equation*}
and define $\Phi : \mbb{S} \to \mbb{R}$ by
\begin{equation}\label{e:morse_ord1}
\Phi (s_{0}) = \left\langle \mbi{d}, \int_{s_{0}}^{\tilde{s}} \bsym{\ze} (\ta) \, d \ta \right\rangle.
\end{equation}
We say that $\Phi$ is a {\bfseries \itshape Morse function} if the critical set $C(\Phi) =
\{ s \in \mbb{S} : \Phi'(s) = 0 \}$ is finite and if for every $s \in C(\Phi)$, we have
$\Phi''(s) \neq 0$.  We are now in position to state the main theorem.  In
Theorem~\ref{mt:main_1}, we assume that the radius of $M$ is $\leqs \ka_{0} \ve$ for some
constant $\ka_{0} > 0$.

\begin{m_theorem}\label{mt:main_1}
  Let $G_{T}$ denote the time-$T$ map associated with~\eqref{e:forced}.  Suppose that the
  function $\Phi$ defined by~\eqref{e:morse_ord1} is a Morse function.  Then there exist a
  small constant $\ka_{1} > 0$ and a large constant $\ka_{2} > \ka_{1}$ such that the
  following holds.  If
\begin{enumerate}
\item
$|\la_{1}| < \ka_{1}$,
\label{hyp:contract}
\item
$\frac{\ve}{|\la_{1}|} < \ka_{1}$,
\item
$\frac{\ve \si}{|\la_{1}|} > \ka_{2}$,
\end{enumerate}
then there exists $T_{1} > 0$ and a set $\De \subset [T_{1}, \infty)$ of positive Lebesgue
measure such that for $T \in \De$, $G_{T}$ admits a strange attractor $\Om$ in $M$ and
satisfies~\bpref{li:sa1}--\bpref{li:sa4} from Section~\ref{s:r1t}.  For every interval $I
\subset [T_{1}, \infty)$ of length $1$, $\ell (\De \cap I) > 0$, where $\ell$ denotes the
Lebesgue measure on $\mbb{R}$.
\end{m_theorem}

\begin{remark}
  The assumption that $\Phi$ is a Morse function is quite mild and should hold for a
  typical forcing vector field $\mbi{F}$.  We do not formulate precise results of this
  type in this paper, but such results should hold in terms of both topological genericity
  and prevalence.  Prevalence is a measure-theoretic notion of genericity that generalizes
  the concept of `Lebesgue almost every' to infinite-dimensional spaces.  It provides a
  powerful framework for describing generic phenomena in a probabilistic way (see
  {\itshape e.g.}~\cite{HbStYj1992, HbStYj1993, OwYj2005}).
\end{remark}

\begin{remark}
  Theorem~\ref{mt:main_1} concludes that $G_{T}$ exhibits sustained, observable chaos for
  a set of values of $T$ of positive Lebesgue measure rather than for all $T \in [T_{1},
  \infty)$.  This is not a consequence of the nature of the proof.  Rather, it is a
  fundamental consequence of the fact that an alternate scenario competes with the SRB
  scenario in the space of $T$-values.  For an open set $\mscr{S}$ of $T$-values in
  $[T_{1}, \infty)$, the basin $U$ contains a $G_{T}$-invariant Cantor set on which
  $G_{T}$ is uniformly hyperbolic (a horseshoe) and a periodic sink.  The trajectory of
  Lebesgue almost every $\mbi{x} \in U$ converges to the periodic sink.  Thus for $T \in
  \mscr{S}$, $G_{T}$ exhibits transient chaos: a typical trajectory in the basin will move
  erratically for some time due to the presence of the horseshoe before finally converging
  to the periodic sink.
\end{remark}

\begin{remark}
The function $\Phi$ does not depend on the parameters $\la_{1}$, $\si$, and $\ve$.  
\end{remark}

Theorem~\ref{mt:main_1} is related to $2$ results obtained by Wang and Young
in~\cite{WqYls2003}.  Wang and Young consider limit cycles forced by periodic
$\de$-function kicks.  First, they prove that any limit cycle, when suitably kicked, can
be transformed into a strange attractor.  This result is universal but not constructive.
An artificially-strong kick is needed if geometric conditions are unfavorable for the
creation of nonuniform hyperbolicity.  Second, they prove that the Hopf limit cycle that
emerges from a supercritical Hopf bifurcation can be transformed into a strange
attractor.  Here the so-called twist factor plays the role of the shear integral.  Unlike
the shear integral, the twist factor is local in the sense that it depends only on
derivatives of the vector field at the bifurcation parameter.

Many of the quantities in Theorem~\ref{mt:main_1} are required to be sufficiently large or
sufficiently small.  This is an unavoidable consequence of the perturbative nature of the
analytic techniques used in the proof.  However, numerical evidence suggests that
shear-induced chaos emerges over parameter ranges that far exceed those to which the
rigorous analysis applies.  For example, Lin and Young~\cite{LkYls2008} conduct numerical
studies of a linear shear flow model previously studied by Zaslavsky~\cite{Zg1978}.  The
work of Lin and Young also provides numerical evidence that the temporal form of the kicks
need not be periodic: temporally-sustained chaotic behavior is observed for random kicks
at Poisson-distributed times and for continuous-time forcing by white noise.


\section{Derivation of the singular limit}\label{s:singular_lim}

\subsection{Derivation of the normal forms}\label{ss:nf_der}

We derive the normal forms~\eqref{e:nf_intrin_t}--\eqref{e:nf_intrin_z}
and~\eqref{e:nf_force_t}--\eqref{e:nf_force_z} that are valid in a small neighborhood of
$\Ga$.  For $s \in \mbb{S}$, let $\{ \mbi{e}_{i} (s) \}_{i=1}^{n}$ be an orthonormal basis
for $\mbb{R}^{n}$ such that $\mbi{e}_{n} (s) = \bsym{\ga}'(s)$ (where $\bsym{\ga}'$
denotes the derivative of $\bsym{\ga}$ with respect to $s$) and $\mbi{e}_{i}$ is a $C^{5}$
function of $s$ for all $1 \leqs i \leqs n$.  One may choose the first $n-1$ vectors in
many ways.  For example, if $\bsym{\ga}$ is at least $C^{n+5}$ and the first $n$ derivatives
of $\bsym{\ga}$ are linearly independent, then one may construct the basis by applying the
Gram-Schmidt procedure to the first $n$ derivatives of $\bsym{\ga}$.  For any $\mbi{x} \in
\mbb{R}^{n}$ sufficiently close to $\Ga$, there exist unique $s \in \mbb{S}$ and $\mbi{y}
= (y_{1}, \ldots, y_{n-1})$ such that
\begin{equation}\label{e:local_rep}
\mbi{x} = \bsym{\ga} (s) + \sum_{i=1}^{n-1} y_{i} \mbi{e}_{i} (s).
\end{equation}
We use $(s, \mbi{y})$ as new phase variables.

Define
\begin{equation*}
\msf{E} (s) =
\begin{pmatrix}
(\mbi{e}_{1} (s))^{\msf{T}}\\
(\mbi{e}_{2} (s))^{\msf{T}}\\
\vdots\\
(\mbi{e}_{n} (s))^{\msf{T}}
\end{pmatrix}
\end{equation*}
Differentiating $\msf{E} (s)$ with respect to $s$, we have $\msf{E}'(s) = \msf{K} (s)
\msf{E} (s)$ where $\msf{K} (s) = (k_{j,i}(s))$ is a skew-symmetric matrix of generalized
curvatures defined by $k_{j,i}(s) = \inproa{\mbi{e}_{j}'(s)}{\mbi{e}_{i}(s)}$.  If the
first $n$ derivatives of $\bsym{\ga}$ are used to create $\msf{E}$, then this differential
equation is the classical Frenet-Serret equation from differential geometry.  For $1 \leqs
i \leqs n$, define the vector
\begin{equation*}
\mbi{k}_{i} (s) =
\begin{pmatrix}
k_{1,i} (s)\\
k_{2,i} (s)\\
\vdots\\
k_{n-1,i} (s)
\end{pmatrix}
\end{equation*}
Differentiating~\eqref{e:local_rep} with respect to $t$, we obtain
\begin{equation}\label{e:dxdt_tform}
\tdf{\mbi{x}}{t} = \sum_{i=1}^{n-1} \tdf{y_{i}}{t} \mbi{e}_{i} (s) + \tdf{s}{t} \left(
  \bsym{\ga}'(s) + \sum_{j=1}^{n-1} y_{j} \mbi{e}_{j}'(s) \right) = \mbi{f} (\mbi{x}) +
\ve P_{\rh,T} (t) \mbi{F} (\mbi{x}).
\end{equation}
Taking the inner product of~\eqref{e:dxdt_tform} with respect to $\mbi{e}_{i} (s)$ for $1
\leqs i \leqs n-1$ yields
\begin{equation*}
\tdf{y_{i}}{t} = \inproa{\mbi{f} (\mbi{x})}{\mbi{e}_{i} (s)} + \ve P_{\rh,T} (t)
\inproa{\mbi{F} (\mbi{x})}{\mbi{e}_{i} (s)} - \tdf{s}{t} \inproa{\mbi{y}}{\mbi{k}_{i}
  (s)}.
\end{equation*}
Taking the inner product of~\eqref{e:dxdt_tform} with respect to $\mbi{e}_{n} (s)$ yields
\begin{equation*}
\tdf{s}{t} (\inproa{\mbi{y}}{\mbi{k}_{n} (s)} + 1) = \inproa{\mbi{f}
  (\mbi{x})}{\mbi{e}_{n} (s)} + \ve P_{\rh,T} (t) \inproa{\mbi{F} (\mbi{x})}{\mbi{e}_{n}
  (s)}.
\end{equation*}
Notice that $\inproa{\mbi{y}}{\mbi{k}_{n} (s)} + 1 \neq 0$ if $\| \mbi{y} \|$ is
sufficiently small.  Consequently, the system
\begin{align}
\sublabon{equation}
\tdf{s}{t} &= \frac{1}{1 + \inproa{\mbi{y}}{\mbi{k}_{n} (s)}} (\inproa{\mbi{f}
  (\mbi{x})}{\mbi{e}_{n} (s)} + \ve P_{\rh,T} (t) \inproa{\mbi{F} (\mbi{x})}{\mbi{e}_{n}
  (s)})
\label{e:nf_prelim_s}\\
\tdf{y_{i}}{t} &= \inproa{\mbi{f} (\mbi{x})}{\mbi{e}_{i} (s)} + \ve P_{\rh,T} (t)
\inproa{\mbi{F} (\mbi{x})}{\mbi{e}_{i} (s)} - \tdf{s}{t} \inproa{\mbi{y}}{\mbi{k}_{i} (s)}
\label{e:nf_prelim_yi}
\end{align}
\sublaboff{equation}%
is valid in a small neighborhood of $\Ga$.

We now extract the terms of leading order in~\eqref{e:nf_prelim_s}
and~\eqref{e:nf_prelim_yi}.  For $1 \leqs j \leqs n$, define $\psi_{j} (s, \mbi{y}) =
\inproa{\mbi{f} (\mbi{x})}{\mbi{e}_{j} (s)}$.  For $1 \leqs i \leqs n-1$, we have
\begin{equation*}
\psi_{i} (s, \mbi{y}) = \inproa{\subpsup{\bsym{\psi}}{i}{1} (s)}{\mbi{y}} + \mcal{O}_{s, \mbi{y}}
(\norm{\mbi{y}}^{2})
\end{equation*}
where
\begin{equation*}
\subpsup{\bsym{\psi}}{i}{1} (s) = \left. \pdf{\inproa{\mbi{f}
      (\mbi{x})}{\mbi{e}_{i} (s)}}{\mbi{y}} \right|_{\mbi{y} = \mbold{0}}.
\end{equation*}
Here $\mcal{O}_{s,\mbi{y}} (\norm{\mbi{y}}^{2})$ denotes a function of $s$ and $\mbi{y}$
for which there exists a constant $K > 0$ independent of $s$ and $\mbi{y}$ such that
$|\mcal{O}_{s,\mbi{y}} (\norm{\mbi{y}}^{2})| \leqs K \norm{\mbi{y}}^{2}$.  Expanding
$\psi_{n} (s, \mbi{y})$, we have
\begin{equation*}
\psi_{n} (s, \mbi{y}) = \subpsup{\psi}{n}{0} (s) + \inproa{\subpsup{\bsym{\psi}}{n}{1}
  (s)}{\mbi{y}} + \mcal{O}_{s, \mbi{y}} (\norm{\mbi{y}}^{2})
\end{equation*}
where
\begin{align*}
\subpsup{\psi}{n}{0} (s) &= \norm{\mbi{f} (\bsym{\ga} (s))}\\
\subpsup{\bsym{\psi}}{n}{1} (s) &= \left. \pdf{\inproa{\mbi{f} (\mbi{x})}{\mbi{e}_{n}
      (s)}}{\mbi{y}} \right|_{\mbi{y} = \mbold{0}}.
\end{align*}
Set $\phi_{j} (s, \mbi{y}) = \inproa{\mbi{F} (\mbi{x})}{\mbi{e}_{j} (s)}$ for $1 \leqs j
\leqs n$.  Writing~\eqref{e:nf_prelim_s} and~\eqref{e:nf_prelim_yi} in terms of $\psi_{j}$
and $\phi_{j}$, when the forcing is active ($P_{\rh,T} (t)=1$) we obtain
\begin{equation}\label{e:nf_expand}
\left\{
\begin{aligned}
\tdf{t}{s} &= \frac{1}{\subpsup{\psi}{n}{0} (s) + \ve \phi_{n} (s, \mbi{y})} \left( 1 +
  \left[ \mbi{k}_{n} (s) - \frac{\subpsup{\bsym{\psi}}{n}{1} (s)}{\subpsup{\psi}{n}{0} (s)
      + \ve \phi_{n} (s, \mbi{y})} \right] \cdot \mbi{y} + \mcal{O}_{s, \mbi{y}}
  (\norm{\mbi{y}}^{2}) \right)\\
\tdf{y_{i}}{s} &= \frac{\ve \phi_{i} (s, \mbi{y})}{\subpsup{\psi}{n}{0} (s) + \ve \phi_{n}
  (s, \mbi{y})} + \left( \frac{\subpsup{\bsym{\psi}}{i}{1} (s)}{\subpsup{\psi}{n}{0} (s) +
    \ve \phi_{n} (s, \mbi{y})} - \mbi{k}_{i} (s) \right) \cdot \mbi{y}\\
&\quad {}+ \left( \frac{\ve \phi_{i} (s, \mbi{y})}{\subpsup{\psi}{n}{0} (s) + \ve \phi_{n}
  (s, \mbi{y})} \right) \left[ \mbi{k}_{n} (s) - \frac{\subpsup{\bsym{\psi}}{n}{1} (s)}{\subpsup{\psi}{n}{0} (s)
      + \ve \phi_{n} (s, \mbi{y})} \right] \cdot \mbi{y} + \mcal{O}_{s, \mbi{y}}
  (\norm{\mbi{y}}^{2})
\end{aligned}
\right.
\end{equation}
When the forcing is off ($P_{\rh,T} (t) = 0$), we have
\begin{equation}\label{e:nfrelax1}
\left\{
\begin{aligned}
\tdf{t}{s} &= \frac{1}{\subpsup{\psi}{n}{0} (s)} \left( 1 + \left[ \mbi{k}_{n} (s) -
    \frac{\subpsup{\bsym{\psi}}{n}{1} (s)}{\subpsup{\psi}{n}{0} (s)} \right] \cdot \mbi{y}
  + \mcal{O}_{s, \mbi{y}} (\norm{\mbi{y}}^{2}) \right)\\
\tdf{y_{i}}{s} &= \left( \frac{\subpsup{\bsym{\psi}}{i}{1} (s)}{\subpsup{\psi}{n}{0} (s)}
  - \mbi{k}_{i} (s) \right) \cdot \mbi{y} + \mcal{O}_{s, \mbi{y}} (\norm{\mbi{y}}^{2})
\end{aligned}
\right.
\end{equation}
Define
\begin{align*}
b_{0} (s) &\defas \frac{1}{\subpsup{\psi}{n}{0} (s)}\\
\mbi{b}_{1} (s) &\defas \frac{1}{\subpsup{\psi}{n}{0} (s)} \left( \mbi{k}_{n} (s) -
  \frac{\subpsup{\bsym{\psi}}{n}{1} (s)}{\subpsup{\psi}{n}{0} (s)} \right)
\end{align*}
and let $\tilde{\msf{A}} (s)$ denote the $(n-1) \times (n-1)$ matrix with $i^{\text{th}}$ row
given by
\begin{equation*}
\left( \frac{\subpsup{\bsym{\psi}}{i}{1} (s)}{\subpsup{\psi}{n}{0} (s)} - \mbi{k}_{i} (s) \right)^{\msf{T}}.
\end{equation*}
In terms of $b_{0}$, $\mbi{b}_{1}$, and $\tilde{\msf{A}}$, system~\eqref{e:nfrelax1}
becomes
\begin{equation}\label{e:nfrelax2}
\left\{
\begin{aligned}
\tdf{t}{s} &= b_{0} (s) + \inproa{\mbi{b}_{1} (s)}{\mbi{y}} + \mcal{O}_{s, \mbi{y}}
(\norm{\mbi{y}}^{2})\\
\tdf{\mbi{y}}{s} &= \tilde{\msf{A}} (s) \mbi{y} + \mcal{O}_{s, \mbi{y}}
(\norm{\mbi{y}}^{2})
\end{aligned}
\right.
\end{equation}
Applying the Floquet theorem, there exists a real-valued, periodic $(n-1) \times (n-1)$ matrix $\msf{P}
(s)$ of period $2L$ such that setting $\mbi{z} = \msf{P}^{-1} (s) \mbi{y}$, we
transform~\eqref{e:nfrelax2} into
\begin{align}
\sublabon{equation}
\tdf{t}{s} &= b_{0} (s) + ((\mbi{b}_{1} (s))^{\msf{T}} \msf{P} (s)) \mbi{z} + h_{2} (s,
\mbi{z})
\label{e:nf_relax_t}\\
\tdf{\mbi{z}}{s} &= \msf{A} \mbi{z} + \mbi{h}_{1} (s, \mbi{z})
\label{e:nf_relax_z}
\end{align}
\sublaboff{equation}%
This is the normal form of~\eqref{e:forced} on which we will base our analysis of the flow
during the relaxation period (when $P_{\rh,T} (t) = 0$).  We obtain the normal form
of~\eqref{e:forced} during the forcing period (when $P_{\rh,T} (t) = 1$) by
writing~\eqref{e:nf_expand} in $(s, \mbi{z})$-coordinates, giving
\begin{align}
\sublabon{equation}
\tdf{t}{s} &= b_{0} (s) + ((\mbi{b}_{1} (s))^{\msf{T}} \msf{P} (s)) \mbi{z} +
\mcal{O}_{s, \mbi{z}} (\ve) + \mcal{O}_{s, \mbi{z}} (\ve \mbi{z}) + \mcal{O}_{s, \mbi{z}}
(\norm{\mbi{z}}^{2})
\label{e:nf_full_t}\\
\tdf{\mbi{z}}{s} &= \msf{A} \mbi{z} + \frac{\ve \msf{P}^{-1} (s) \bsym{\phi}
  (s, \mbi{0})}{\subpsup{\psi}{n}{0} (s)} + \mcal{O}_{s, \mbi{z}}
(\ve \mbi{z}) + \mcal{O}_{s, \mbi{z}} (\ve^{2}) + \mcal{O}_{s, \mbi{z}} (\norm{\mbi{z}}^{2})
\label{e:nf_full_z}
\end{align}
\sublaboff{equation}%
where $\bsym{\phi} (s, \mbi{0}) = (\phi_{1} (s, \mbi{0}), \ldots, \phi_{n-1}
(s, \mbi{0}))^{\msf{T}}$.

\subsection{A general form of the singular limit}\label{ss:general_sl}

Let $\tilde{M} \approx \Ga \times D$ be a tubular neighborhood of $\Ga$ in $\mbb{R}^{n}$,
where $D$ is a disk of sufficiently small radius so that the normal
form~\eqref{e:nf_full_t}--\eqref{e:nf_full_z} is valid.  Let $M \approx \Ga \times
\frac{1}{2} D$.  We define flow-induced maps $H_{k} : M \to \tilde{M}$ and $H_{r} :
\tilde{M} \to \tilde{M}$ as follows.  Let $H_{k}$ be the time-$\rh$ map associated with
the forced system~\eqref{e:nf_full_t}--\eqref{e:nf_full_z}.  We call $H_{k}$ the
\textquoteleft kick'.  Notice that for $\ve$ sufficiently small, $H_{k}$ maps $M$ into
$\tilde{M}$.  Let $H_{r}$ be the time-$(T - \rh)$ map associated with the relaxation
system~\eqref{e:nf_relax_t}--\eqref{e:nf_relax_z}.  We call $H_{r}$ the relaxation map.
There exists $T_{0} = T_{0} (\ve)$ such that if $T \geqs T_{0}$, then $H_{r}$ maps
$\tilde{M}$ into $\interior (M)$.  The composition $G_{T} \defas H_{r} \circ H_{k}$ is the
time-$T$ map generated by the flow.  Our goal is to show that the family $\{ G_{T} : M \to
\interior (M), \; \: T \geqs T_{0} \}$ of diffeomorphisms on $M$ has a well-defined
singular limit in a certain sense as $T \to \infty$.

Let $(s_{0}, \mbi{y}_{0}) \in M$.  We write $H_{k} (s_{0}, \mbi{y}_{0}) = (\hat{s},
\hat{\mbi{z}})$ and compute $H_{r} (\hat{s}, \hat{\mbi{z}})$.
Integrating~\eqref{e:nf_relax_z}, we have
\begin{equation*}
\mbi{z} (s) = e^{(s - \hat{s}) \msf{A}} \left( \hat{\mbi{z}} + \int_{\hat{s}}^{s} e^{-(\ta
    - \hat{s}) \msf{A}} \mbi{h}_{1} (\ta, \mbi{z} (\ta)) \, d \ta \right).
\end{equation*}
Integrating~\eqref{e:nf_relax_t}, we have
\begin{equation}
\label{e:nf_rel_t_int}
T - \rh = \int_{\hat{s}}^{s(T)} b_{0} (\ta) \, d \ta + \hat{\mbi{z}} \cdot \int_{\hat{s}}^{s(T)}
\mbi{b}_{1} (\ta)^{\msf{T}} \msf{P} (\ta) e^{(\ta - \hat{s}) \msf{A}} \, d \ta +
\sum_{k=1}^{2} E_{k} (s(T)),
\end{equation}
where the error terms are given by
\begin{align*}
E_{1} (s(T)) &= \int_{\hat{s}}^{s(T)} \mbi{b}_{1} (\ta)^{\msf{T}} \msf{P} (\ta) e^{(\ta -
  \hat{s}) \msf{A}} \int_{\hat{s}}^{\ta} e^{-(\xi - \hat{s}) \msf{A}} \mbi{h}_{1} (\xi,
\mbi{z} (\xi)) \, d \xi \, d \ta\\
E_{2} (s(T)) &= \int_{\hat{s}}^{s(T)} h_{2} (\ta, \mbi{z} (\ta)) \, d \ta.
\end{align*}
Letting $T \to \infty$ in~\eqref{e:nf_rel_t_int} yields nothing meaningful.  However, we
use the fact that $s$ is can be computed modulo $2L$ to introduce an auxiliary parameter
$a \in \mbb{S}$ and thereby obtain the singular limit.  Recall that $p_{0}$ is the period
of $\bsym{\et}$.  As $a$ varies from $0$ to $2L$, $\bsym{\ga}$ traverses $\Ga$ $2$ times.
Let $\hat{t} : [0, 2L) \to [0, 2p_{0})$ be the strictly increasing function defined by
$\bsym{\eta} (\hat{t} (a)) = \bsym{\ga} (a)$.  For $m \in \mbb{Z}^{+}$ and $a \in
\mbb{S}$, set $T = \rh + 2p_{0}m + \hat{t} (a)$.  Substituting
into~\eqref{e:nf_rel_t_int}, writing $s(\rh + 2p_{0}m + \hat{t} (a)) = \hat{s} + 2Lm +
\tilde{s} (\rh + 2p_{0}m + \hat{t} (a))$, and using the fact that
\begin{equation*}
\int_{v}^{v+2Lm} b_{0} (\ta) \, d \ta = 2p_{0}m
\end{equation*}
for all $v \in \mbb{R}$, we obtain
\begin{equation}
\label{e:s_tilde}
\begin{aligned}
\hat{t} (a) &= \int_{\hat{s}}^{\hat{s} + \tilde{s} (\rh + 2p_{0}m + \hat{t} (a))} b_{0}
(\ta) \, d \ta + \hat{\mbi{z}} \cdot \int_{\hat{s}}^{s (\rh + 2p_{0}m + \hat{t} (a))}
\mbi{b}_{1} (\ta)^{\msf{T}} \msf{P} (\ta) e^{(\ta - \hat{s}) \msf{A}} \, d \ta \\ 
&\qquad {}+
\sum_{k=1}^{2} E_{k} (s(\rh + 2p_{0}m + \hat{t} (a)).
\end{aligned}
\end{equation}
Define $G_{a, m^{-1}} : M \to \interior (M)$ by $G_{a, m^{-1}} (s_{0}, \mbi{y}_{0}) = (s(\rh + 2p_{0}m
+ \hat{t} (a)), \mbi{y} (\rh + 2p_{0}m + \hat{t} (a))$.  It follows
from~\cite[Proposition~3.1]{WqYls2003} that there exists $s_{\infty} (s_{0},
\mbi{y}_{0}, a)$ such that
\begin{equation*}
\lim_{m \to \infty} \hat{s} + \tilde{s} (\rh + 2p_{0}m + \hat{t} (a)) = s_{\infty} (s_{0},
\mbi{y}_{0}, a)
\end{equation*}
and $s_{\infty} (s_{0}, \mbi{y}_{0}, a)$ is defined implicitly by taking the $m \to
\infty$ limit in~\eqref{e:s_tilde}:
\begin{equation}
\label{e:s_infty_1}
\hat{t} (a) = \int_{\hat{s}}^{s_{\infty} (s_{0}, \mbi{y}_{0}, a)} b_{0} (\ta) \, d \ta +
\inproav{\hat{\mbi{z}}}{\int_{\hat{s}}^{\infty} \mbi{b}_{1} (\ta)^{\msf{T}} \msf{P} (\ta)
e^{(\ta - \hat{s}) \msf{A}} \, d \ta} + \sum_{k=1}^{2} E_{k} (\infty).
\end{equation}
The family of maps $\{ G_{a, 0} : M \to \Ga \times \{ \mbi{0} \} \}_{a \in \mbb{S}}$
defined by
\begin{equation*}
G_{a, 0} (s_{0}, \mbi{y}_{0}) = (s_{\infty} (s_{0}, \mbi{y}_{0}, a), \mbi{0})
\end{equation*}
is the desired singular limit.  It follows from~\cite[Proposition 3.1]{WqYls2003} that the
maps
\begin{equation*}
(s_{0}, \mbi{y}_{0}, a) \mapsto G_{a,m^{-1}} (s_{0}, \mbi{y}_{0})
\end{equation*}
converge to the map
\begin{equation*}
(s_{0}, \mbi{y}_{0}, a) \mapsto G_{a,0} (s_{0}, \mbi{y}_{0})
\end{equation*}
in $C^{3} (M \times \mbb{S})$ as $m \to \infty$.

\subsection{A computable form of the singular limit}\label{ss:com_sl}

>From this point forward, we assume the setting of Theorem~\ref{mt:main_1}.  We now extract
the primary terms in the right side of~\eqref{e:s_infty_1}.  Recall that the shear integral
$\bsym{\Si}$ is defined by
\begin{equation*}
\bsym{\Si} = (\Si_{1}, \ldots, \Si_{n-1}) = \int_{0}^{2L} \mbi{b}_{1} (\ta)^{\msf{T}} \msf{P} (\ta) \, d \ta.
\end{equation*}
and that the shear factor is given by $\si = \| \bsym{\Si} \|$.  We assume that the operator
$\msf{A}$ is diagonalizable and that the $\mbi{z}$-coordinate has been chosen such that
$\msf{A} = \diag (\la_{1}, \ldots, \la_{n-1})$, where $0 > \la_{1} \geqs \la_{2} \geqs
\cdots \geqs \la_{n-1}$ are the eigenvalues of $\msf{A}$.  Fix the normalized shear vector
$\frac{\bsym{\Si}}{\si}$ and the eigenvalue ratios $\mu_{i} =
\frac{\la_{1}}{\la_{i}}$ for $1 \leqs i \leqs n-1$.  Set $\rh = 1$ for notational
simplicity.  We regard $\si$, $\ve$, and $\la_{1}$ as the parameters associated with the
singular limit.

Expanding the second term on the right side of~\eqref{e:s_infty_1}, we have
\begin{equation}\label{e:mean_ext}
\begin{aligned}
\int_{\hat{s}}^{\infty} \mbi{b}_{1} (\ta)^{\msf{T}} \msf{P} (\ta) e^{(\ta - \hat{s})
  \msf{A}} \, d \ta &= \int_{\hat{s}}^{\infty} \bsym{\Si} e^{(\ta - \hat{s}) \msf{A}} \, d
\ta + \int_{\hat{s}}^{\infty} (\mbi{b}_{1} (\ta)^{\msf{T}} \msf{P} (\ta) - \bsym{\Si})
e^{(\ta - \hat{s}) \msf{A}} \, d \ta\\
&= \bar{\mbi{d}} + \int_{\hat{s}}^{\infty} (\mbi{b}_{1} (\ta)^{\msf{T}} \msf{P} (\ta) - \bsym{\Si})
e^{(\ta - \hat{s}) \msf{A}} \, d \ta,
\end{aligned}
\end{equation}
where
\begin{equation*}
\bar{\mbi{d}} = \int_{\hat{s}}^{\infty} \bsym{\Si} e^{(\ta - \hat{s}) \msf{A}} \, d \ta =
\left( - \frac{\Si_{i}}{\la_{i}} \right)_{i=1}^{n-1}.
\end{equation*}
Let $\tilde{H}_{k} : M \to \tilde{M}$ be the time-$1$ map generated by the system
\begin{align}
\sublabon{equation}
\tdf{t}{s} &= b_{0} (s)
\label{e:nf_full_t_l}\\
\tdf{\mbi{z}}{s} &= \frac{\ve \msf{P}^{-1} (s) \bsym{\phi} (s,\mbi{0})}{\subpsup{\psi}{n}{0} (s)}
\label{e:nf_full_z_l}
\end{align}
\sublaboff{equation}%
obtained from~\eqref{e:nf_full_t}--\eqref{e:nf_full_z} by retaining only the terms of
leading order.  For $(s_{0}, \mbi{y}_{0}) \in M$, write $\tilde{H}_{k} (s_{0},
\mbi{y}_{0}) = (\tilde{s}, \tilde{\mbi{z}})$.  Integrating~\eqref{e:nf_full_t_l}
and~\eqref{e:nf_full_z_l} gives
\begin{equation}\label{e:int_nf_l}
\begin{aligned}
1 &= \int_{s_{0}}^{\tilde{s}} b_{0} (\ta) \, d \ta,\\
\tilde{\mbi{z}} &= \mbi{z}_{0} + \ve \int_{s_{0}}^{\tilde{s}} \frac{\msf{P}^{-1} (\ta)
  \bsym{\phi} (\ta,\mbi{0})}{\subpsup{\psi}{n}{0} (\ta)} \, d \ta.
\end{aligned}
\end{equation}

\begin{proposition}\label{p:f_per_dif}
There exists a system constant $K_{0} > 0$ such that
\begin{equation}\label{e:f_per_dif}
\hat{s} = \tilde{s} + \xi_{1} (s_{0}, \mbi{y}_{0}), \quad \hat{\mbi{z}} = \tilde{\mbi{z}}
+ \bsym{\xi}_{2} (s_{0}, \mbi{y}_{0})
\end{equation}
where
\begin{equation*}
\norm{\xi_{1} | \{ \mbi{y}_{0} = \mbi{0} \}}_{C^{3} (\mbb{S})} \leqs K_{0} \ve, \quad
\norm{\bsym{\xi}_{2} | \{ \mbi{y}_{0} = \mbi{0} \}}_{C^{3} (\mbb{S})} \leqs K_{0} \ve
|\la_{1}|.
\end{equation*} 
\end{proposition}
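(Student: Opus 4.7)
The plan is to propagate the two flows---the full forced system~\eqref{e:nf_full_t}--\eqref{e:nf_full_z} and its leading-order truncation~\eqref{e:nf_full_t_l}--\eqref{e:nf_full_z_l}---from the common initial condition $(s_0, \mbi{0})$, which by $\mbi{y}_0 = \mbi{0}$ corresponds to $\mbi{z}_0 = \mbi{0}$ in both systems, for the fixed time $\rh = 1$, and then compare the endpoints $(\hat{s}, \hat{\mbi{z}})$ and $(\tilde{s}, \tilde{\mbi{z}})$ via variation of constants. All regularity in $s_0$ is ultimately inherited from the $C^{5}$ regularity of $\mbi{f}$ and the $C^{4}$ regularity of $\mbi{F}$.

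First I would establish the a priori bound $\norm{\mbi{z}(s)} = \mcal{O}(\ve)$ on $[s_0, \hat{s}]$. Variation of constants applied to~\eqref{e:nf_full_z} gives
\[
\mbi{z}(s) = \int_{s_0}^{s} e^{(s-\ta)\msf{A}} \left[\frac{\ve\,\msf{P}^{-1}(\ta)\bsym{\phi}(\ta,\mbi{0})}{\subpsup{\psi}{n}{0}(\ta)} + \mcal{O}_{s,\mbi{z}}(\ve\mbi{z}) + \mcal{O}_{s,\mbi{z}}(\ve^{2}) + \mcal{O}_{s,\mbi{z}}(\norm{\mbi{z}}^{2})\right]d\ta,
\]
and since the eigenvalues $\la_i$ are negative, $\norm{e^{u\msf{A}}} \leqs 1$ for $u \geqs 0$; a standard bootstrap on the $\mcal{O}(1)$-length integration interval yields the bound. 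Feeding this into~\eqref{e:nf_full_t} via $1 = \int_{s_0}^{\hat{s}}(dt/ds)\,d\ta$ and subtracting $1 = \int_{s_0}^{\tilde{s}} b_0(\ta)\,d\ta$ from~\eqref{e:int_nf_l}, every correction integrates to $\mcal{O}(\ve)$; since $b_0 = 1/\norm{\mbi{f}(\bsym{\ga})}$ is bounded below on $\mbb{S}$, this gives $\hat{s} - \tilde{s} = \mcal{O}(\ve)$.

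The heart of the argument is the $\mbi{z}$-estimate, where the improvement from $\mcal{O}(\ve)$ to $\mcal{O}(\ve|\la_1|)$ emerges only after cancellation. Writing
\[
\hat{\mbi{z}} - \tilde{\mbi{z}} = \int_{s_0}^{\tilde{s}} \bigl(e^{(\hat{s}-\ta)\msf{A}} - \msf{I}\bigr)\frac{\ve\,\msf{P}^{-1}(\ta)\bsym{\phi}(\ta,\mbi{0})}{\subpsup{\psi}{n}{0}(\ta)}\,d\ta + \int_{\tilde{s}}^{\hat{s}} e^{(\hat{s}-\ta)\msf{A}}\frac{\ve\,\msf{P}^{-1}(\ta)\bsym{\phi}(\ta,\mbi{0})}{\subpsup{\psi}{n}{0}(\ta)}\,d\ta + R,
\]
where $R$ collects the contributions from the $\mcal{O}(\ve\mbi{z})$, $\mcal{O}(\ve^{2})$, and $\mcal{O}(\norm{\mbi{z}}^{2})$ terms in~\eqref{e:nf_full_z}. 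By the first step, $\norm{R} = \mcal{O}(\ve^{2})$; the endpoint-discrepancy integral has length $\mcal{O}(\ve)$ and integrand $\mcal{O}(\ve)$, so it is $\mcal{O}(\ve^{2})$; and in the main integral, the fixed ratios $\mu_i = \la_1/\la_i$ force $\norm{\msf{A}} = \mcal{O}(|\la_1|)$, so $\norm{e^{u\msf{A}} - \msf{I}} = \mcal{O}(|\la_1|)$ uniformly for $u = \mcal{O}(1)$, producing the target bound $\mcal{O}(\ve|\la_1|)$. Under hypothesis~(2) of Theorem~\ref{mt:main_1}, $\ve < \ka_1|\la_1|$, whence $\ve^{2}$ is absorbed into $\mcal{O}(\ve|\la_1|)$.

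For the $C^{3}(\mbb{S})$ norm, I would differentiate the integral formulas up to three times in $s_0$. The assumed regularity gives uniform bounds on the first three derivatives of $b_0$, $\mbi{b}_1$, $\msf{P}$, $\subpsup{\psi}{n}{0}$, and $\bsym{\phi}(\cdot,\mbi{0})$; the implicit function theorem applied to the defining relations for $\hat{s}$ and $\tilde{s}$ (with $b_0$ bounded below) yields $C^{3}$ dependence of the endpoints on $s_0$, and the chain rule propagates the above estimates to all three derivatives without structural change, producing the system constant $K_0$. The main obstacle is the $\mbi{z}$-estimate: a naive Gronwall-type bound only yields $\mcal{O}(\ve)$, and extracting the extra factor of $|\la_1|$ requires the observation that the leading $\ve$-driving term is \emph{identical} in the two systems, so that the first-order discrepancy must come from $e^{u\msf{A}} - \msf{I} = \mcal{O}(|\la_1|)$ while all other error channels are quadratic in $\ve$ and hence absorbed via hypothesis~(2).
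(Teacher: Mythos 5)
Your proposal is correct and reaches the same conclusion, but organizes the $\mbi{z}$-estimate along a genuinely different path from the paper. For $\xi_1$, you invoke the implicit function theorem and note that all corrections integrate to $\mcal{O}(\ve)$; the paper instead sets up an explicit contraction mapping $G$ on a ball of radius $r = 2C_0\ve$ in $C^3(\mbb{S})$ whose fixed point is $\xi_1$. Both deliver the $C^3$ bound; the contraction gives more explicit control and simultaneously establishes periodicity of $\xi_1$, while yours is lighter but relies on the (unstated but true) $C^3$ regularity of the integrand, which in turn requires the same $\partial_s^m\partial_{s_0}^l \mbi{z}$ bounds the paper isolates as Lemma~\ref{le:z_kick_der}. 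For $\bsym{\xi}_2$, the paper writes $\bsym{\xi}_2 = (\mbi{z}^{(1)}-\mbi{z}^{(2)})(\hat s) + (\mbi{z}^{(2)}(\hat s)-\mbi{z}^{(2)}(\tilde s))$, forms $\bdelta = \mbi{z}^{(1)}-\mbi{z}^{(2)}$, and observes that after cancellation of the common $\ve$-driving term the residual vector field $\mbi{w}=\msf{A}\mbi{z}^{(1)} + \mcal{O}(\ve^2)$ has size $C|\lambda_1|\ve$, whence $\bdelta(\hat s) = \int_{s_0}^{\hat s}\mbi{w}$ is $\mcal{O}(\ve|\lambda_1|)$. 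You instead difference the variation-of-constants representations directly and locate the gain in the identity $\norm{e^{u\msf{A}}-\msf{I}} = \mcal{O}(|\lambda_1|)$ for $u = \mcal{O}(1)$. These are two faces of the same cancellation---the leading $\ve$-term is identical in the two systems, so only $\msf{A}$ contributes at first order---but your presentation makes the source of the $|\lambda_1|$ factor more transparent, at the cost of handling the endpoint-discrepancy integral $\int_{\tilde s}^{\hat s}$ as a separate $\mcal{O}(\ve^2)$ piece (which the paper folds into the $\mbi{z}^{(2)}(\hat s)-\mbi{z}^{(2)}(\tilde s)$ term). Your absorption of $\mcal{O}(\ve^2)$ into $\mcal{O}(\ve|\lambda_1|)$ via $\ve/|\lambda_1|$ small is exactly what the paper does implicitly by assuming the hypotheses of Lemma~\ref{le:z_kick_der}. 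One point you should be more careful about: the claim that the chain rule ``propagates the estimates to all three derivatives without structural change'' is the content of a nontrivial lemma (the paper's Lemma~\ref{le:z_kick_der}), and in a full write-up you would need to prove those $s_0$-derivative bounds on $\mbi{z}(s)$ explicitly, not merely assert them.
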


Setting $\mbi{y}_{0} = \mbi{0}$, define $g (s_{0}, a) = s_{\infty} (s_{0}, \mbi{0}, a)$.
Substituting~\eqref{e:mean_ext}, \eqref{e:int_nf_l}, and~\eqref{e:f_per_dif} into~\eqref{e:s_infty_1}, the
value $g (s_{0}, a)$ is defined implicitly by
\begin{equation}\label{e:slform_pre}
\begin{split}
\hat{t} (a) + 1 &= \int_{s_{0}}^{g (s_{0}, a)} b_{0} (\ta) \, d \ta +
\inproa{(\tilde{\mbi{z}} + \bsym{\xi}_{2} (s_{0}, \mbi{0}))}{\bar{\mbi{d}}}\\
&\quad {}-
\int_{\tilde{s}}^{\hat{s}} b_{0} (\ta) \, d \ta + \hat{\mbi{z}} \cdot
\int_{\hat{s}}^{\infty} (\mbi{b}_{1} (\ta)^{\msf{T}} \msf{P} (\ta) - \bsym{\Si}) e^{(\ta -
  \hat{s}) \msf{A}} \, d \ta + \sum_{k=1}^{2} E_{k} (\infty). 
\end{split}
\end{equation}
Rescaling $\bar{\mbi{d}}$, we define
\begin{equation*}
\mbi{d} = \left( \frac{\Si_{i} \mu_{i}}{\si} \right)_{i=1}^{n-1}, \qquad \Phi (s_{0}) =
\inproav{\mbi{d}}{\int_{s_{0}}^{\tilde{s}} \frac{\msf{P}^{-1} (\ta) \bsym{\phi} (\ta,\mbi{0})}{\subpsup{\psi}{n}{0} (\ta)} \, d \ta},
\end{equation*}
giving
\begin{equation*}
\inproa{\tilde{\mbi{z}}}{\bar{\mbi{d}}} = \frac{\ve \si}{|\la_{1}|} \Phi (s_{0}).
\end{equation*}
The higher-order terms are given by $\mscr{E}_{1} = E_{1} (\infty)$, $\mscr{E}_{2} = E_{2}
(\infty)$,
\begin{equation*}
\mscr{E}_{3} = \inproav{\hat{\mbi{z}}}{\int_{\hat{s}}^{\infty} (\mbi{b}_{1} (\ta)^{\msf{T}}
\msf{P} (\ta) - \bsym{\Si}) e^{(\ta - \hat{s}) \msf{A}} \, d \ta}, \qquad
\mscr{E}_{4} = -\int_{\tilde{s}}^{\hat{s}} b_{0} (\ta) \, d \ta, \qquad \mscr{E}_{5} =
\inproa{\bsym{\xi}_{2} (s_{0}, \mbi{0})}{\bar{\mbi{d}}}.
\end{equation*}
Setting $\mscr{E} = \sum_{k=1}^{5} \mscr{E}_{k}$ and substituting
into~\eqref{e:slform_pre}, we obtain the final form of the singular limit:
\begin{equation}\label{e:slform}
\hat{t} (a) + 1 = \int_{s_{0}}^{g (s_{0}, a)} b_{0} (\ta) \, d \ta + \frac{\ve
  \si}{|\la_{1}|} \Phi (s_{0}) + \mscr{E}.
\end{equation}

\begin{proposition}\label{p:e_bound}
There exists a system constant $K_{1} > 0$ such that the following hold.
\begin{align*}
\norm{\mscr{E}_{1}}_{C^{3} (\mbb{S})} &\leqs K_{1} \frac{\si
  \ve}{|\la_{1}|} \left( \frac{\ve}{|\la_{1}|} \right)\\
\norm{\mscr{E}_{2}}_{C^{3} (\mbb{S})} &\leqs K_{1} \frac{\si
  \ve}{|\la_{1}|} \left( \frac{\ve}{\si} \right)\\
\norm{\mscr{E}_{3}}_{C^{3} (\mbb{S})} &\leqs K_{1} \frac{\si
  \ve}{|\la_{1}|} (|\la_{1}|)\\
\norm{\mscr{E}_{4}}_{C^{3} (\mbb{S})} &\leqs K_{1} \frac{\si
  \ve}{|\la_{1}|} \left( \frac{|\la_{1}|}{\si} \right)\\
\norm{\mscr{E}_{5}}_{C^{3} (\mbb{S})} &\leqs K_{1} \frac{\si
  \ve}{|\la_{1}|} (|\la_{1}|)
\end{align*}
\end{proposition}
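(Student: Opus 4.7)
The plan is to estimate each $\mscr{E}_k$ separately using its explicit formula and the exponential decay $e^{\la_1(\ta-\hat{s})}$ from $\msf{A}$, with $\la_1 < 0$. Common inputs are $\|\hat{\mbi{z}}\|_{C^3} = O(\ve)$ (from \eqref{e:int_nf_l} and Proposition~\ref{p:f_per_dif}), $|\bar{\mbi{d}}| = O(\si/|\la_1|)$ (from $\bar{\mbi{d}}=(-\Si_i/\la_i)$ together with the fixed ratios $\mu_i$), and uniform smoothness in $s$ of $b_0$, $\mbi{b}_1$, $\msf{P}$, and $\bsym{\phi}$.

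I would first dispose of $\mscr{E}_4$ and $\mscr{E}_5$. For $\mscr{E}_4$, Proposition~\ref{p:f_per_dif} bounds $|\hat s - \tilde s|$ by $K_0\ve$ in $C^3(\mbb{S})$, and $b_0$ is smooth and bounded, so $\|\mscr{E}_4\|_{C^3}\lesssim \ve$, which equals $(\si\ve/|\la_1|)(|\la_1|/\si)$. For $\mscr{E}_5$, Proposition~\ref{p:f_per_dif} gives $\|\bsym{\xi}_2\|_{C^3}\leqs K_0\ve|\la_1|$, so pairing with $\bar{\mbi{d}}$ of size $\si/|\la_1|$ yields $\|\mscr{E}_5\|_{C^3}\lesssim \si\ve=(\si\ve/|\la_1|)(|\la_1|)$. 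For $\mscr{E}_2$ I use that the relaxation flow $\tdf{\mbi{z}}{s}=\msf{A}\mbi{z}+\mbi{h}_1(s,\mbi{z})$ with $\mbi{h}_1=\mcal{O}(\norm{\mbi{z}}^2)$ admits, by variation of parameters and Gronwall, the bound $\norm{\mbi{z}(\ta)}\lesssim \ve\, e^{\la_1(\ta-\hat s)}$; substituting $h_2=\mcal{O}(\norm{\mbi{z}}^2)$ gives $\|\mscr{E}_2\|_{C^3}\lesssim \int_{\hat s}^{\infty} \ve^2 e^{2\la_1(\ta-\hat s)}\,d\ta\lesssim \ve^2/|\la_1|=(\si\ve/|\la_1|)(\ve/\si)$. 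The analogous estimate for the nested integral in $E_1(\infty)$, with an additional factor coming from the outer $\mbi{b}_1(\ta)^{\msf{T}}\msf{P}(\ta)$ (whose period-$2L$ integral is $\bsym{\Si}$) producing the $\si$, yields $\|\mscr{E}_1\|_{C^3}\lesssim \si\ve^2/\la_1^2=(\si\ve/|\la_1|)(\ve/|\la_1|)$.

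The delicate term is $\mscr{E}_3=\inproa{\hat{\mbi{z}}}{\int_{\hat s}^{\infty} (\mbi{b}_1(\ta)^{\msf{T}}\msf{P}(\ta)-\bsym{\Si})\,e^{(\ta-\hat s)\msf{A}}\,d\ta}$. A naive bound gives only $O(\si\ve/|\la_1|)$, which is one factor of $|\la_1|$ too large. To recover this factor I would integrate by parts in the inner integral: up to the constant offset that, by construction of $\bsym{\Si}$ in \eqref{e:mean_ext}, is already absorbed into $\bar{\mbi{d}}$, the factor $\mbi{b}_1(\ta)^{\msf{T}}\msf{P}(\ta)-\bsym{\Si}$ is a $2L$-periodic zero-mean oscillation whose antiderivative is itself periodic of size $O(\si)$, and integration by parts against $e^{\la_1(\ta-\hat s)}$ produces a boundary term $O(\si)$ plus an integral in which differentiating the exponential has extracted an extra $|\la_1|$; combined with $\norm{\hat{\mbi{z}}}=O(\ve)$ this gives $\|\mscr{E}_3\|_{C^3}\lesssim \si\ve=(\si\ve/|\la_1|)(|\la_1|)$.

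The main obstacle is not any single estimate but maintaining $C^3(\mbb{S})$-control throughout: $\hat s$, $\hat{\mbi{z}}$, and therefore $\mbi{z}(\ta)$ and $\mbi{h}_1(\ta,\mbi{z}(\ta))$ depend on $s_0$ through the kick map and then through the relaxation flow, so each derivative in $s_0$ must be propagated through a variational equation and shown not to destroy the exponential decay $e^{\la_1(\ta-\hat s)}$ that drives integrability. This follows from the standard smooth dependence of solutions on initial conditions supplied by the $C^5$-smoothness of $\mbi{f}$ and $C^4$-smoothness of $\mbi{F}$, together with the uniform smoothness and periodicity in $s$ of all coefficient functions; the derivative bounds then have the same structure as the $C^0$ estimates sketched above.
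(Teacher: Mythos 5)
Your proposal is substantially correct and matches the paper's proof for $\mscr{E}_1$, $\mscr{E}_2$, $\mscr{E}_4$, and $\mscr{E}_5$. The one genuinely different technique is your treatment of $\mscr{E}_3$. Both you and the paper exploit the same two facts --- that $\mbi{v}(\ta)=\mbi{b}_1(\ta)^{\msf{T}}\msf{P}(\ta)-\bsym{\Si}$ is $2L$-periodic with zero mean --- to gain the crucial factor of $|\la_1|$, but you do it by integrating by parts (the antiderivative of $\mbi{v}$ stays bounded of size $O(\si)$ rather than growing linearly, the $\msf{A}$ pulled from the exponential gives $O(|\la_1|)$, and $\int_0^\infty e^{\la_1\ta}d\ta$ gives back $O(1/|\la_1|)$, netting $O(\si)$), whereas the paper chops $[0,\infty)$ into period-$2L$ blocks, sums the geometric series to produce a factor $(\msf{1}-e^{2\msf{A}})^{-1}$, and then subtracts the vanishing zero-mean term to replace $e^{\ta\msf{A}}$ by $e^{\ta\msf{A}}-\msf{1}$, with the product $(e^{\ta\msf{A}}-\msf{1})(\msf{1}-e^{2\msf{A}})^{-1}$ uniformly $O(1)$ as $\la_1\to 0$. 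The two arguments are of comparable difficulty and deliver identical bounds; your IBP version is arguably slightly cleaner.

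Two caveats. First, you should note that the paper's bound \eqref{e:per_int_bound}, and your IBP analogue, are stated per-component and rely on $\msf{A}$ being diagonal with all $|\la_i|\asymp|\la_1|$ (the fixed ratios $\mu_i$); that is what lets you treat $\|\msf{A}\|$ as $O(|\la_1|)$ in the IBP and what makes $(1-e^{2\la_i})^{-1}=O(1/|\la_1|)$ uniformly. Second, and more importantly, your final paragraph undersells the work needed for the $C^3$ control. The issue is not merely standard smooth dependence: Proposition~\ref{p:derivative_bounds} shows that $\hat\bz$-derivatives of the relaxation solution actually grow like $1/|\la_1|^{k-1}$, and it is only because each such derivative gets paired via the chain rule with the $s_0$-derivatives of $\hat\bz$ --- each of which contributes an extra factor $O(\ve)$ by Lemma~\ref{le:z_kick_der} --- that the products $\ve^k/|\la_1|^{k-1}$ remain $O(\ve^2)$ under the standing hypothesis $\ve/|\la_1|\ll 1$. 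The bulk of the paper's proof is precisely this bookkeeping (the explicit expansions of $\frac{d^k}{ds_0^k}\bh_1(\bze)$), and a complete proof along your lines would have to reproduce it; labeling it ``standard smooth dependence'' glosses over the one place where the hypotheses of Theorem~\ref{mt:main_1} are essential.
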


\section{Theory of rank one attractors}\label{s:r1t}

Let $D$ denote the closed unit disk in $\RR^{n-1}$ and let $M = \Sbb^{1} \times D$.
We consider a family of maps $G_{\mbf{a}, b} : M \to M$, where $\mbf{a} =
(a_{1}, \ldots, a_{k}) \in \mscr{V}$ is a vector of parameters and $b \in B_{0}$
is a scalar parameter.  Here $\mscr{V} = \mscr{V}_{1} \times \cdots \times \mscr{V}_{k} \subset
\RR^{k}$ is a product of intervals and $B_{0} \subset \RR \setminus \{ 0 \}$ is
a subset of $\RR$ with an accumulation point at $0$.  Points in $M$ are denoted
by $(x,y)$ with $x \in \Sbb^{1}$ and $y \in D$.  Rank one theory postulates the
following.

\begin{list}{\bfseries (\thegrealm)}
{
\usecounter{grealm}
\setlength{\topsep}{1.5ex plus 0.2ex minus 0.2ex}
\setlength{\labelwidth}{0.9cm}
\setlength{\leftmargin}{1.1cm}
\setlength{\labelsep}{0.2cm}
\setlength{\rightmargin}{0.0cm}
\setlength{\parsep}{0.5ex plus 0.2ex minus 0.1ex}
\setlength{\itemsep}{0ex plus 0.2ex}
}
\item
{\bfseries Regularity conditions.}\label{li:g1}
\begin{enumerate}[\bfseries (a)]
\item
For each $b \in B_{0}$, the function $(x,y,\mbf{a}) \mapsto G_{\mbf{a},b} (x,y)$
is $C^{3}$.\label{li:g1a}
\item
Each map $G_{\mbf{a},b}$ is an embedding of $M$ into itself.\label{li:g1b}
\item
There exists $K_{D} > 0$ independent of $\mbf{a}$ and $b$ such that for all
$\mbf{a} \in \mscr{V}$, $b \in B_{0}$, and $z$, $z' \in M$, we have\label{li:g1c}
\begin{equation*}
\frac{|\det DG_{\mbf{a},b} (z)|}{|\det DG_{\mbf{a},b} (z')|} \leqs K_{D}.
\end{equation*}
\end{enumerate}
\item
{\bfseries Existence of a singular limit.}\label{li:g2} For $\mbf{a} \in \mscr{V}$,
there exists a map $G_{\mbf{a},0} : M \to \Sbb^{1} \times \{ 0 \}$ such that the
following holds.  For every $(x,y) \in M$ and $\mbf{a} \in \mscr{V}$, we have
\begin{equation*}
\lim_{b \to 0} G_{\mbf{a}, b} (x,y) = G_{\mbf{a}, 0} (x,y)
\end{equation*}     
Identifying $\Sbb^{1} \times \{ 0 \}$ with
$\Sbb^{1}$, we refer to $G_{\mbf{a},0}$ and the restriction $f_{\mbf{a}} : \Sbb^{1}
\to \Sbb^{1}$ defined by $f_{\mbf{a}} (x) = G_{\mbf{a},0} (x,0)$ as the {\bfseries
\itshape singular limit} of $G_{\mbf{a},b}$.
\item
\label{li:g3}
{\mathversion{bold} $C^{3}$ }{\bfseries convergence to the singular limit.}
We select a special index $j \in \{ 1, \ldots, k \}$.  Fix
$a_{i} \in \mscr{V}_{i}$ for $i \neq j$.  For every such choice of parameters $a_{i}$, the maps $(x, y, a_{j})
\mapsto G_{\mbf{a}, b} (x, y)$ converge in the $C^{3}$ topology to $(x, y,
a_{j}) \mapsto G_{\mbf{a}, 0} (x, y)$ on $M \times \mscr{V}_{j}$ as $b \to 0$.
\item
{\bfseries Existence of a sufficiently expanding map within the singular
limit.}\label{li:g4} 
There exists $\mbf{a}^{*} = (\subsup{a}{1}{*}, \ldots, \subsup{a}{k}{*}) \in
\mscr{V}$ such that $f_{\mbf{a}^{*}} \in \mscr{M}$, where $\mscr{M}$ is the set of
Misiurewicz-type maps defined in Definition~\ref{d:mis} below.
\item
{\bfseries Parameter transversality.}\label{li:g5}
Let $C_{\mbf{a}^{*}}$ denote the critical set of $f_{\mbf{a}^{*}}$.  For $a_{j} \in
\mscr{V}_{j}$, define the vector $\tilde{\mbf{a}}_{j} \in \mscr{V}$ by
$\tilde{\mbf{a}}_{j} = (\subsup{a}{1}{*}, \ldots, \subsup{a}{j-1}{*}, a_{j},
\subsup{a}{j+1}{*}, \ldots, \subsup{a}{k}{*})$.  We say that the family $\{ f_{\mbf{a}}
\}$ satisfies the {\bfseries \itshape parameter transversality} condition with respect to
parameter $a_{j}$ if the following holds.  For each $x \in C_{\mbf{a}^{*}}$, let $p =
f_{\mbf{a}^{*}} (x)$ and let $x(\tilde{\mbf{a}}_{j})$ and $p(\tilde{\mbf{a}}_{j})$ denote
the continuations of $x$ and $p$, respectively, as the parameter $a_{j}$ varies around
$\subsup{a}{j}{*}$.  The point $p(\tilde{\mbf{a}}_{j})$ is the unique point such that
$p(\tilde{\mbf{a}}_{j})$ and $p$ have identical symbolic itineraries under
$f_{\tilde{\mbf{a}}_{j}}$ and $f_{\mbf{a}^{*}}$, respectively.  We have
\begin{equation*}
\left. \frac{d}{d a_{j}} f_{\tilde{\mbf{a}}_{j}} (x(\tilde{\mbf{a}}_{j}))
\right|_{a_{j} = \subsup{a}{j}{*}} \neq \left. \frac{d}{d a_{j}}
p(\tilde{\mbf{a}}_{j}) \right|_{a_{j} = \subsup{a}{j}{*}}.
\end{equation*}
\item
{\bfseries Nondegeneracy at \textquoteleft turns'.}\label{li:g6} 
For each $x \in C_{\mbf{a}^{*}}$, there exists $1 \leqs m \leqs n-1$ such
that
\begin{equation*}
\left. \pdfop{y_{m}} G_{\mbf{a}^{*}, 0} (x,y) \right|_{y = 0} \neq 0.
\end{equation*}
\item
{\bfseries Conditions for mixing.}\label{li:g7}
\begin{enumerate}[\bfseries (a)]
\item
We have $e^{\tfrac{1}{3} \lambda_{0}} > 2$, where $\lambda_{0}$ is defined
within Definition~\ref{d:mis}.\label{li:g7a}
\item
\label{li:g7b}
Let $J_{1}, \ldots, J_{r}$ be the intervals of monotonicity of
$f_{\mbf{a}^{*}}$.  Let $Q = (q_{im})$ be the matrix of `allowed transitions' defined by
\begin{equation*}
q_{im} =
\begin{cases}
1, &\text{if } f_{\mbf{a}^{*}} (J_{i}) \supset J_{m},\\
0, &\text{otherwise}.
\end{cases}
\end{equation*}
There exists $N > 0$ such that $Q^{N} > 0$.
\end{enumerate}
\end{list}
We now define the family $\mscr{M}$.

\begin{definition}\label{d:mis}
  We say that $f \in C^{2} (\Sbb^{1}, \RR)$ is a Misiurewicz map and we write $f \in
  \mscr{M}$ if the following hold for some neighborhood $U$ of the critical set $C = C(f)
  = \{ x \in \Sbb^{1} : f'(x) = 0 \}$.
\begin{enumerate}[(A)]
\item
\textbf{(Outside of} {\mathversion{bold} $U$}\textbf{)}  There
exist $\lambda_{0} > 0$, $M_{0} \in \ZZ^{+}$, and $0 < d_{0} \leqs 1$ such
that\label{li:ma}
\begin{enumerate}[(1)]
\item
for all $m \geqs M_{0}$, if $f^{i} (x) \notin U$ for $0 \leqs i \leqs m-1$, then
$|(f^{m})' (x)| \geqs e^{\lambda_{0} m}$,\label{li:ma1}
\item
for any $m \in \ZZ^{+}$, if $f^{i} (x) \notin U$ for $0 \leqs i \leqs m-1$ and
$f^{m} (x) \in U$, then $|(f^{m})' (x)| \geqs d_{0} e^{\lambda_{0}
m}$.\label{li:ma2}
\end{enumerate}
\item
\textbf{(Critical orbits)}  For all $c \in C$ and $i > 0$, $f^{i} (c) \notin
U$.\label{li:mb}
\item
\textbf{(Inside} {\mathversion{bold} $U$}\textbf{)}\label{li:mc}
\begin{enumerate}[(1)]
\item
We have $f''(x) \neq 0$ for all $x \in U$, and\label{li:mc1}
\item
for all $x \in U \setminus C$, there exists $p_{0} (x) > 0$ such that $f^{i} (x)
\notin U$ for all $i < p_{0} (x)$ and $|(f^{p_{0} (x)})' (x)| \geqs d_{0}^{-1}
e^{\tfrac{1}{3} \lambda_{0} p_{0} (x)}$.\label{li:mc2}
\end{enumerate}
\end{enumerate}
\end{definition}

Rank one theory states that given a family $\{ G_{\mbf{a},b} \}$
satisfying~\textbf{\pref{li:g1}--\pref{li:g6}}, a measure-theoretically significant subset
of this family consists of maps admitting attractors with strong chaotic and stochastic
properties.  We formulate the precise results and we then describe the properties that the
attractors possess.

\begin{theorem}[\cite{WqYls2001, WqYls2008}]\label{t:wyt1}
  Suppose the family $\{ G_{\mbf{a},b} \}$
  satisfies~\bpref{li:g1},~\bpref{li:g2},~\bpref{li:g4}, and~\bpref{li:g6}.  The following
  holds for all $1 \leqs j \leqs k$ such that the parameter $a_{j}$
  satisfies~\bpref{li:g3} and~\bpref{li:g5}.  For all sufficiently small $b \in B_{0}$,
  there exists a subset $\De_{j} \subset \mscr{V}_{j}$ of positive Lebesgue measure such
  that for $a_{j} \in \De_{j}$, $G_{\tilde{\mbf{a}}_{j}, b}$ admits a strange attractor
  $\Om$ with properties~\bpref{li:sa1},~\bpref{li:sa2}, and~\bpref{li:sa3}.
\end{theorem}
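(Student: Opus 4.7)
The plan is to promote the one-dimensional Misiurewicz map $f_{\mbf{a}^{*}}$ supplied by~\bpref{li:g4} to a two-dimensional setting via a Benedicks--Carleson-type parameter exclusion in the $a_{j}$ direction, and then feed the resulting admissible maps into Young's tower machinery to obtain the SRB measure and its statistical properties.

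First, I would use the $C^{3}$ convergence~\bpref{li:g3} to transfer the hyperbolic picture of $f_{\mbf{a}^{*}}$ to $G_{\tilde{\mbf{a}}_{j},b}$ for small $b$ and $a_{j}$ near $\subsup{a}{j}{*}$. Outside a $\delta$-neighborhood of $C_{\mbf{a}^{*}}\times\{0\}$, condition~\bpref{li:ma} of Definition~\ref{d:mis} supplies uniform horizontal expansion, and the bounded-distortion hypothesis~\bpref{li:g1c}, together with the strong vertical contraction coming from the image of $M$ under $G_{\mbf{a},b}$ collapsing onto $\Sbb^{1}\times\{0\}$ as $b\to 0$ (via~\bpref{li:g2}), produces invariant cone fields carrying the one-dimensional expansion rate. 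Inside the critical strip, condition~\bpref{li:g6} guarantees that horizontal admissible curves are genuinely bent in a nondegenerate way, so one can define a two-dimensional critical set $C_{\tilde{\mbf{a}}_{j},b}$ near $C_{\mbf{a}^{*}}\times\{0\}$ whose tangent bundle is transverse to the unstable cone.

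The heart of the argument is the parameter selection. Using parameter transversality~\bpref{li:g5}, I would track the images $G_{\tilde{\mbf{a}}_{j},b}^{n}(c)$ for $c\in C_{\tilde{\mbf{a}}_{j},b}$ as functions of $a_{j}$; transversality guarantees that at every return of a critical orbit to the critical strip, the derivative of $a_{j}\mapsto G^{n}(c)$ is bounded below by a definite fraction of $|(G^{n-1})'|$ evaluated along the unstable direction. An inductive exclusion then removes, at each return time $n_{k}$, those $a_{j}$ for which some critical orbit returns to within distance $e^{-\alpha n_{k}}$ of $C_{\tilde{\mbf{a}}_{j},b}$. A bound-period analysis, in which distortion along the subsequent bound orbit is controlled by~\bpref{li:g1a} and part (C) of Definition~\ref{d:mis}, shows that the set excluded at stage $k$ has measure $O(e^{-\beta n_{k}})$, while admissibility is recovered after each bound period from parts (A) and (B) of Definition~\ref{d:mis}. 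The surviving set $\De_{j}$ has positive Lebesgue measure and consists of parameters for which every critical orbit obeys a Collet--Eckmann-type estimate $|(G^{n})'(z)|\geqs e^{c n}$ along the unstable direction.

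Finally, for $a_{j}\in\De_{j}$ one constructs the SRB measure by iterating a horizontal admissible curve and using the derivative and distortion control to show that pushforwards of arc length converge to a measure supported on $\Om=\bigcap_{i\geqs 0}G_{\tilde{\mbf{a}}_{j},b}^{i}(M)$ with absolutely continuous conditionals on unstable manifolds. The Collet--Eckmann exponent, together with the recurrence estimates supplied by the parameter exclusion, furnishes a Young tower with exponential return-time tails, from which~\bpref{li:sa1}--\bpref{li:sa3} follow by Young's machinery. I expect the main obstacle to be the parameter exclusion itself: propagating distortion control through arbitrarily long bound periods, coordinated with the two-dimensional geometry of critical returns, is the technical core and is what occupies the bulk of~\cite{WqYls2001, WqYls2008}.
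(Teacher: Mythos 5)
This statement is cited, not proved, in the paper: Theorem~\ref{t:wyt1} is imported wholesale from Wang and Young's work (\cite{WqYls2001} for $n=2$, \cite{WqYls2008} for general $n$), and the remark following Theorem~\ref{t:wyt2} makes clear that the authors regard the proof as external. There is therefore no ``paper's own proof'' against which to compare yours, and the task of the present paper is instead to verify hypotheses~\bpref{li:g1}--\bpref{li:g7} for the specific family of time-$T$ maps arising from the forced limit cycle, which it does in Section~\ref{s:sl_analysis}.

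That said, your sketch is a fair high-level summary of what Wang and Young actually do: transfer the Misiurewicz structure of the singular limit $f_{\mbf{a}^{*}}$ to the two-dimensional (or $n$-dimensional) maps via $C^{3}$ closeness, build invariant unstable cones away from the critical region, use~\bpref{li:g6} to control the geometry at ``turns,'' run a Benedicks--Carleson parameter exclusion in $a_{j}$ using the transversality hypothesis~\bpref{li:g5} to obtain a positive-measure set $\De_{j}$ with Collet--Eckmann-type expansion along critical orbits, and then erect a Young tower with exponential return tails from which~\bpref{li:sa1}--\bpref{li:sa3} follow. Two small caveats on accuracy: for general $n$ the critical set is not a curve near $C_{\mbf{a}^{*}}\times\{0\}$ but a higher-codimension object whose description is considerably more delicate, and as the paper's own remark points out, \cite{WqYls2008} only establishes the SRB measure for general $n$, with the full package~\bpref{li:sa1}--\bpref{li:sa3} deferred to a later reference. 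Neither caveat affects the present paper, since it does not reprove the theorem; it only invokes it after checking the hypotheses.
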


\begin{theorem}[\cite{WqYls2001, WqYls2002, WqYls2008}]\label{t:wyt2}
In the sense of Theorem~\ref{t:wyt1},
\begin{equation*}
\text{\textbf{\pref{li:g1}--\pref{li:g7}}} \Longrightarrow
\text{\textbf{\pref{li:sa1}--\pref{li:sa4}}}.
\end{equation*}
\end{theorem}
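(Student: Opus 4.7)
The plan is to follow the strategy pioneered by Benedicks--Carleson for H\'enon maps and systematized by Wang--Young in the abstract rank-one setting: reduce questions about the full diffeomorphism $G_{\mbf{a},b}$ for small $b$ to a one-dimensional Jakobson-type parameter-exclusion argument on the singular limit $f_{\mbf{a}}$, lift the resulting nonuniform hyperbolicity back to $G_{\mbf{a},b}$ using $C^{3}$-proximity together with invariant cones, and extract the statistical conclusions from a Young tower. The output is a set $\De_{j}\subset\mscr{V}_{j}$ of positive measure on which the attractor $\Om$ carries a hyperbolic product structure with a unique SRB measure enjoying~\bpref{li:sa1}--\bpref{li:sa4}.

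I would begin entirely inside the one-dimensional family $\{f_{\mbf{a}}\}$. Starting from the Misiurewicz map $f_{\mbf{a}^{*}}$, condition~\bpref{li:ma} furnishes uniform exponential expansion outside the critical neighborhood $U$, and~\bpref{li:mb} says that critical orbits never reenter $U$. I would induct on $n$: at stage $n$, delete a small-measure subset of $\mscr{V}_{j}$ so that on what remains every critical orbit $\{f_{\tilde{\mbf{a}}_{j}}^{i}(c)\}_{i\le n}$ avoids an exponentially shrinking neighborhood $\{x:\mathrm{dist}(x,C_{\tilde{\mbf{a}}_{j}})<e^{-\alpha n}\}$. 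Parameter transversality~\bpref{li:g5} converts this spatial condition on orbits into a condition in $a_{j}$ with respect to which the critical value moves essentially as fast as it does in phase space; thus the measure deleted at stage $n$ is dominated by $e^{-\alpha n}$, and the surviving set $\De_{j}^{\mathrm{1D}}\subset\mscr{V}_{j}$ has positive Lebesgue measure. Combined with~\bpref{li:mc}, this gives a positive Lyapunov exponent for $f_{\tilde{\mbf{a}}_{j}}$ along Lebesgue almost every orbit, plus the slow-recurrence of the critical set required in what follows.

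Next I would promote this one-dimensional picture to $G_{\tilde{\mbf{a}}_{j},b}$ for $b\in B_{0}$ small. The $C^{3}$-convergence~\bpref{li:g3} and the nondegeneracy at turns~\bpref{li:g6} together allow the construction of stable and temporary unstable cone fields on $M$ and an identification of generation-$n$ critical points of $G_{\tilde{\mbf{a}}_{j},b}$ as tangencies between these cones, with bounded distortion controlled by~\bpref{li:g1c}. I would then run the Wang--Young two-parameter induction that simultaneously evolves the higher-dimensional critical set and performs a further parameter deletion inside $\De_{j}^{\mathrm{1D}}$: at each critical return, a binding argument recovers exponential growth after a close approach to the critical set, and bounded distortion guarantees that the one-dimensional exponential estimates survive along unstable leaves up to an error exponentially small in $b$. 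The surviving parameter set $\De_{j}\subset\mscr{V}_{j}$ has positive measure and on it $G_{\tilde{\mbf{a}}_{j},b}$ admits the hyperbolic product structure that yields~\bpref{li:sa1}--\bpref{li:sa3}. For~\bpref{li:sa4}, the Markov structure coming from the monotonicity intervals $J_{i}$ of $f_{\mbf{a}^{*}}$, the transitivity $Q^{N}>0$ from~\bpref{li:g7b}, and the gap $e^{\tfrac{1}{3}\lambda_{0}}>2$ from~\bpref{li:g7a} together produce a Young tower over a reference unstable leaf whose return times have exponential tails and are aperiodic; exponential decay of correlations and the central limit theorem for H\"older observables then follow from Young's machinery.

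The main obstacle is the two-parameter induction that lifts the one-dimensional exclusion to the genuinely higher-dimensional map. One must simultaneously evolve the critical set of $G_{\tilde{\mbf{a}}_{j},b}$, exclude further bad parameters, and execute the binding argument at every critical return so that each close approach to $C$ is followed by a recovery of expansion controlled independently of $b$. Balancing the reciprocal of the distance to the critical set against the intrinsic contraction rate set by $b$, while keeping the distortion bounded along ever longer unstable leaves, is the delicate step and is precisely where the Wang--Young refinement of the Benedicks--Carleson scheme carries the argument.
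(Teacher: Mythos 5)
The paper does not prove Theorem~\ref{t:wyt2} at all: it is stated as a citation to the Wang--Young papers \cite{WqYls2001, WqYls2002, WqYls2008}, and the remark immediately following it explains exactly which parts of the conclusion are proved where (and notes that the full $n$-dimensional statement of \bpref{li:sa1}--\bpref{li:sa4} is deferred to \cite{WqYls2009}). The theorem is a black box that the present paper \emph{uses}; the paper's own contribution is the verification that the specific family $G_{a,m^{-1}}$ arising from the kicked limit cycle satisfies hypotheses \bpref{li:g1}--\bpref{li:g7}, which is carried out in Sections~\ref{s:singular_lim}--\ref{s:sl_analysis} and the appendix. So there is no ``paper's own proof'' to compare against.

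That said, your sketch is a fair high-level summary of the Wang--Young program (Benedicks--Carleson-style parameter exclusion on the singular limit, lifting via $C^3$-closeness, cone fields, and nondegeneracy at the turns, binding and bounded distortion along unstable leaves, and a Young tower to extract \bpref{li:sa2}--\bpref{li:sa4}). One imprecision worth flagging: in the rank-one framework, the Misiurewicz condition \bpref{li:g4} and the transversality \bpref{li:g5} are hypotheses verified at a single parameter $\mbf{a}^*$, and the measure-theoretic parameter exclusion producing $\De_j$ is run once, at the level of the higher-dimensional maps $G_{\tilde{\mbf{a}}_j,b}$, not as a separate one-dimensional exclusion followed by a second exclusion. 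More fundamentally, your two paragraphs compress several hundred pages of analysis (the inductive construction of critical curves for $G_{\tilde{\mbf{a}}_j,b}$, the bookkeeping of bound and free returns, the recovery estimates, the construction of the SRB measure and the tower) into assertions; none of the genuinely hard estimates are carried out. That is acceptable if you are, like the paper, citing the result, but it should not be mistaken for a proof.
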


\begin{remark}
  The proof of Theorem~\ref{t:wyt1} for the special case $n=2$ appears
  in~\cite{WqYls2001}.  The additional component~$\text{\bpref{li:g7}} \Rightarrow
  \text{\bpref{li:sa4}}$ in Theorem~\ref{t:wyt2} is proved in~\cite{WqYls2002}.  For
  general $n$, Wang and Young~\cite{WqYls2008} prove the existence of an SRB measure for
  $G_{\tilde{\mbi{a}}_{j},b}$ if $a_{j} \in \De_{j}$.  The complete proofs
  of~\bpref{li:sa1}--\bpref{li:sa3} (and~\bpref{li:sa4} assuming~\bpref{li:g7}) for
  $G_{\tilde{\mbi{a}}_{j},b}$ with $a_{j} \in \De_{j}$ will appear in~\cite{WqYls2009} for
  general $n$.
\end{remark}

We now describe~\bpref{li:sa1}--\bpref{li:sa4} precisely.  Write $G =
G_{\tilde{\mbi{a}}_{j},b}$.

\begin{list}{\bfseries (\thesarealm)}
{
\usecounter{sarealm}
\setlength{\topsep}{1.5ex plus 0.2ex minus 0.2ex}
\setlength{\labelwidth}{1.15cm}
\setlength{\leftmargin}{1.35cm}
\setlength{\labelsep}{0.2cm}
\setlength{\rightmargin}{0.0cm}
\setlength{\parsep}{0.5ex plus 0.2ex minus 0.1ex}
\setlength{\itemsep}{0ex plus 0.2ex}
}
\item {\bfseries Positive Lyapunov exponent.}\label{li:sa1} Let $U$ denote the basin of
  attraction of the attractor $\Om$.  This means that $U$ is an open set satisfying
  $G(\oline{U}) \subset U$ and
\begin{equation*}
\Om = \bigcap_{m=0}^{\infty} T^{m} (\oline{U}).
\end{equation*}
For almost every
$z \in U$ with respect to Lebesgue measure, the orbit of $z$ has a
positive Lyapunov exponent.  That is,
\begin{equation*}
\lim_{m \to \infty} \frac{1}{m} \log \| D G^{m} (z) \| > 0.
\end{equation*}
\item
{\bfseries Existence of SRB measures and basin property.}\label{li:sa2}
\begin{enumerate}[\bfseries (a)]
\item
The map $G$ admits at least one and at most finitely many ergodic SRB measures
each one of which has no zero Lyapunov exponents.  Let $\nu_{1}, \cdots, \nu_{r}$
denote these measures.\label{li:sa2a}
\item
For Lebesgue-a.e. $z \in U$, there exists $j(z) \in \{ 1, \ldots, r \}$ such
that for every continuous function $\vp : U \to \RR$,\label{li:sa2b}
\begin{equation*}
\frac{1}{m} \sum_{i=0}^{m-1} \vp (G^{i} (x,y)) \to \int \vp \, d \nu_{j(z)}.
\end{equation*}
\end{enumerate}
\item
{\bfseries Statistical properties of dynamical observations.}\label{li:sa3}
\begin{enumerate}[\bfseries (a)]
\item
For every ergodic SRB measure $\nu$ and every H\"{o}lder continuous function
$\vp : \Om \to \RR$, the sequence $\{ \vp \circ G^{i} : i \in \ZZ^{+} \}$ obeys
a central limit theorem.  That is, if $\int \vp \, d \nu = 0$, then the
sequence\label{li:sa3a}
\begin{equation*}
\frac{1}{\sqrt{m}} \sum_{i=0}^{m-1} \vp \circ G^{i}
\end{equation*}
converges in distribution (with respect to $\nu$) to the normal distribution.  The variance of the
limiting normal distribution is strictly positive unless $\vp = \psi
\circ G - \psi$ for some $\psi \in L^{2} (\nu)$.
\item
\label{li:sa3b}
Suppose that for some $N \geqs 1$, $G^{N}$ has an SRB measure $\nu$ that is
mixing.  Then given a H\"{o}lder exponent $\eta$, there exists $\tau = \tau
(\eta) < 1$ such that for all H\"{o}lder $\vp$, $\psi : \Om \to \RR$ with
H\"{o}lder exponent $\eta$, there exists $K = K(\vp, \psi)$ such that for all $m
\in \NN$,
\begin{equation*}
\left| \int (\vp \circ G^{mN}) \psi \, d \nu - \int \vp \, d \nu \int \psi \, d
\nu \right| \leqs K(\vp, \psi) \tau^{m}.
\end{equation*}
\end{enumerate}
\item
{\bfseries Uniqueness of SRB measures and ergodic properties.}\label{li:sa4}
\begin{enumerate}[\bfseries (a)]
\item
The map $G$ admits a unique (and therefore ergodic) SRB measure $\nu$,
and\label{li:sa4a}
\item
the dynamical system $(G, \nu)$ is mixing, or, equivalently, isomorphic to a
Bernoulli shift.\label{li:sa4b}
\end{enumerate}
\end{list}

\section{Verification of the rank one hypotheses}\label{s:sl_analysis}

We view the singular limit $\{ G_{a,0} : a \in \mbb{S} \}$ as a function of $3$
parameters: $\ve$, $\si$, and $\la_{1}$.  We show that the family $\{ G_{a,m^{-1}} : a \in
\mbb{S}, \; \: m \in \mbb{Z}^{+} \}$ satisfies~\bpref{li:g1}--\bpref{li:g7} if the
parameters $\ve$, $\si$, and $\la_{1}$ satisfy certain scaling assumptions.

\subsection{$\mbold{1D}$ analysis: verification of~\bpref{li:g4},~\bpref{li:g5},
  and~\bpref{li:g7}}\label{ss:sl_1d}

Recall that $g(s,a)$ is defined implicitly by
\begin{equation*}
\hat{t} (a) + 1 = \int_{s}^{g(s,a)} b_{0} (\ta) \, d \ta + \frac{\ve \si}{|\la_{1}|} \Phi
(s) + \mscr{E}.
\end{equation*}
Defining $f_{a} (s) = g(s,a)$, $\La = \frac{\ve \si}{|\la_{1}|}$, and $\Psi (s) = \Phi (s) + \La^{-1} \mscr{E}$,
the singular limit becomes
\begin{equation}\label{e:sl_form2}
\hat{t} (a) + 1 = \int_{s}^{f_{a} (s)} b_{0} (\ta) \, d \ta + \La \Psi (s).
\end{equation}
For a map $f : \mbb{S} \to \mbb{S}$ and $\de > 0$, let $C(f) = \{ s : f'(s) = 0
\}$ and let $C_{\de} (f) = \{ s : |s - \hat{s}| < \de \text{ for some } \hat{s} \in C(f)
\}$.  We assume the following about $\Psi$: there exist positive constants $K_{2}$,
$d_{0}$, $d_{1}$, and $d_{2}$, and a constant $\de_{0}$ satisfying $0 < \de_{0} < \frac{1}{2} d_{1}$,
such that the following hold.
\begin{list}{\bfseries (\thearealm)}
{
\usecounter{arealm}
\setlength{\topsep}{1.5ex plus 0.2ex minus 0.2ex}
\setlength{\labelwidth}{0.9cm}
\setlength{\leftmargin}{1.1cm}
\setlength{\labelsep}{0.2cm}
\setlength{\rightmargin}{0.0cm}
\setlength{\parsep}{0.5ex plus 0.2ex minus 0.1ex}
\setlength{\itemsep}{0ex plus 0.2ex}
}
\item
\label{li:a1}
$\norm{\Psi}_{C^{3} (\mbb{S})} < K_{2}$
\item
\label{li:a2}
$|\Psi''(s)| > d_{0}$ for $s \in C_{\de_{0}} (\Psi)$
\item
\label{li:a3}
If $\Psi'(s_{1}) = \Psi'(s_{2}) = 0$ and $s_{1} \neq s_{2}$, then $|s_{1}-s_{2}| > d_{1}$.
\item
\label{li:a4}
$|\Psi'(s)| > d_{2}$ for $s \in \mbb{S} \setminus C_{\de_{0}} (\Psi)$
\end{list}
Because $\Phi$ is a Morse function, Proposition~\ref{p:e_bound} implies that
assumptions~\pref{li:a1}--\pref{li:a4} are satisfied if $\si \geqs 1$, $|\la_{1}|$ is
sufficiently small, and $\frac{\ve}{|\la_{1}|}$ is sufficiently small.

We now compare the map $f_{a}$ to the map $\Psi$.  Let $\{ \bar{v}_{1}, \ldots,
\bar{v}_{q_{0}} \}$ be the set of critical points of $\Psi$.  Set $\xi =
\La^{-\frac{3}{4}}$.

\begin{lemma}\label{l:morse_estm}
  There exists $\La_{0} > 0$ and positive constants $K_{3}$, $K_{4}$, and $K_{5}$ such
  that the following hold for fixed $\La > \La_{0}$.
\begin{enumerate}[(a)]
\item
\label{li:morse_1}
$C (f_{a}) = \{ v_{1}, \ldots, v_{q_{0}} \}$ with $|v_{i} - \bar{v}_{i}| < K_{3} \La^{-1}$
for $1 \leqs i \leqs q_{0}$
\item
\label{li:morse_2}
$|f_{a}''(s)| > K_{4} \La$ for all $s \in C_{\xi} (f_{a})$
\item
\label{li:morse_3}
$|f_{a}'(s)| > K_{5} \La^{\frac{1}{4}}$ for all $s \in \mbb{S} \setminus
C_{\frac{1}{2} \xi} (f_{a})$
\end{enumerate}
\end{lemma}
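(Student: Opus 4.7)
My plan is to exploit the implicit definition of $f_{a}$ together with the assumptions~\pref{li:a1}--\pref{li:a4} on $\Psi$. Differentiating the singular limit equation~\eqref{e:sl_form2} with respect to $s$ gives
\begin{equation*}
f_{a}'(s) = \frac{b_{0}(s) - \La \Psi'(s)}{b_{0}(f_{a}(s))},
\end{equation*}
so $s \in C(f_{a})$ iff $\Psi'(s) = \La^{-1} b_{0}(s)$. A second implicit differentiation, evaluated at a critical point $v$ of $f_{a}$ (where $f_{a}'(v)=0$), yields
\begin{equation*}
f_{a}''(v) = \frac{b_{0}'(v) - \La \Psi''(v)}{b_{0}(f_{a}(v))},
\end{equation*}
and a third differentiation shows $\|f_{a}'''\|_{C^{0}} = O(\La)$ uniformly on $\mbb{S}$ (using $\|\Psi\|_{C^{3}} < K_{2}$ and the positive lower bound on $b_{0}$ coming from $b_{0}(s) = \|\mbi{f}(\bsym{\ga}(s))\|^{-1}$ for $\bsym{\ga}$ smooth and non-stationary).

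To prove~\pref{li:morse_1}, I would split $\mbb{S}$ into $C_{\de_{0}}(\Psi)$ and its complement. Outside $C_{\de_{0}}(\Psi)$, assumption~\pref{li:a4} gives $|\La \Psi'(s)| > \La d_{2}$, which dominates $\|b_{0}\|_{C^{0}}$ for $\La$ large; hence no critical points there. Inside each component of $C_{\de_{0}}(\Psi)$ containing some $\bar{v}_{i}$, assumption~\pref{li:a2} gives $|\Psi''| > d_{0}$, so the function $s \mapsto \Psi'(s) - \La^{-1} b_{0}(s)$ is strictly monotone and a straightforward implicit function argument (or the mean value theorem: $|\Psi'(\bar v_i)| = 0$ and the perturbation is $O(\La^{-1})$) produces a unique root $v_{i}$ with $|v_{i} - \bar{v}_{i}| \leqs K_{3} \La^{-1}$. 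Assumption~\pref{li:a3} (together with $\de_{0} < \tfrac{1}{2} d_{1}$) ensures the components of $C_{\de_{0}}(\Psi)$ are disjoint so the $v_{i}$ are distinct and exhaust $C(f_{a})$.

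For~\pref{li:morse_2}, continuity of $\Psi''$ together with $|v_{i} - \bar v_{i}| = O(\La^{-1})$ gives $|\Psi''(v_{i})| > \tfrac{1}{2} d_{0}$, so the formula for $f_{a}''(v_{i})$ above yields $|f_{a}''(v_{i})| \gtrsim \La d_{0}$. Propagating this to all of $C_{\xi}(f_{a})$ uses the bound $|f_{a}''(s) - f_{a}''(v_{i})| \leqs \|f_{a}'''\|_{C^{0}} \cdot \xi = O(\La \cdot \La^{-3/4}) = O(\La^{1/4})$, which is negligible next to $\La d_{0}$ for $\La$ large; choose $K_{4}$ accordingly. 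For~\pref{li:morse_3}, I would again split: if $s \in \mbb{S} \setminus C_{\de_{0}}(\Psi)$, the direct estimate $|f_{a}'(s)| \gtrsim \La d_{2}/\|b_{0}\|_{C^{0}} \gg \La^{1/4}$ is immediate from~\pref{li:a4}. If $s \in C_{\de_{0}}(\Psi) \setminus C_{\tfrac{1}{2}\xi}(f_{a})$, then $s$ lies in a neighborhood of a unique $v_{i}$ on which the bound in~\pref{li:morse_2} applies, and Taylor expanding $f_{a}'$ around $v_{i}$ gives $|f_{a}'(s)| \geqs \tfrac{1}{2} |f_{a}''(v_{i})| |s - v_{i}| - O(\|f_{a}'''\|_{C^{0}} (s-v_{i})^{2}) \gtrsim \La \cdot \tfrac{1}{2}\xi = \tfrac{1}{2} \La^{1/4}$, with error $O(\La \cdot \La^{-3/2}) = o(1)$; take $K_{5}$ to absorb constants.

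The only delicate step is the bookkeeping in~\pref{li:morse_3} around the overlap of the two regimes and choosing $\de_{0}$, $\xi$, $K_{4}$, $K_{5}$ consistently so that the Taylor remainder is truly lower order; once the scaling $\xi = \La^{-3/4}$ is fixed, everything balances because $\La \xi^{2} = \La^{-1/2} \ll \La \xi = \La^{1/4}$. The existence and $C^{3}$-regularity of $f_{a}$ itself follows from the implicit function theorem applied to~\eqref{e:sl_form2}, since $\partial_{f_a} [\int_{s}^{f_{a}} b_{0}] = b_{0}(f_{a}(s)) > 0$.
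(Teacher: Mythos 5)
Your part (a) matches the paper's argument. Your approach to (b) --- evaluate $f_a''(v_i)$ from~\eqref{e:sla_e2} at the critical point and propagate via $f_a'''$ --- is a valid alternative to the paper's, which reads $|f_a''|\gtrsim\La$ directly from~\eqref{e:sla_e2} on $C_\xi(f_a)$, but it rests on a bound that is wrong. A third differentiation of~\eqref{e:sl_form2} produces a term $3b_0'(f_a)f_a'f_a''$; globally $|f_a'|=O(\La)$ by~\eqref{e:sla_e1} (since $\|\Psi'\|_\infty<K_2$) and $|f_a''|=O(\La^2)$ by~\eqref{e:sla_e2}, so $\|f_a'''\|_{C^0(\mbb{S})}=O(\La^3)$, not $O(\La)$. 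Restricted to $C_\xi(f_a)$, where $|f_a'|=O(\La^{1/4})$ and $|f_a''|=O(\La)$, the bound improves to $O(\La^{5/4})$, and for (b) this still suffices: $|f_a''(s)-f_a''(v_i)|\leqs\sup_{C_\xi(f_a)}|f_a'''|\cdot\xi=O(\La^{1/2})\ll\La$. So (b) is salvageable, though the extra differentiation is a detour the paper avoids.

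Part (c) has a genuine gap. The set $C_{\de_0}(\Psi)\setminus C_{\frac{1}{2}\xi}(f_a)$ contains points with $|s-v_i|$ of order $\de_0$, a fixed constant, yet your error estimate $O(\La\cdot\La^{-3/2})$ silently assumes $|s-v_i|=O(\xi)$. Your Taylor remainder is $\frac{1}{2}|f_a'''(\theta)|(s-v_i)^2$ with $\theta$ between $v_i$ and $s$; once $\theta$ leaves $C_\xi(f_a)$, $|f_a'''(\theta)|$ can be as large as $O(\La^3)$, and the remainder is then not lower order than the leading term $|f_a''(v_i)||s-v_i|\approx\La|s-v_i|$. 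The correct route --- which is what the paper takes --- is to expand $\Psi'$, not $f_a'$: for $s$ in the component of $C_{\de_0}(\Psi)$ containing $\bar v_i$, the mean value theorem together with (A2) gives $|\Psi'(s)|=|\Psi''(\theta)||s-\bar v_i|>d_0|s-\bar v_i|\gtrsim d_0\xi$ whenever $|s-v_i|\geqs\frac{1}{2}\xi$, and then~\eqref{e:sla_e1} directly yields $|f_a'(s)|\gtrsim\La\xi=\La^{1/4}$ with no need for $f_a'''$ at all.
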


\begin{proof}[Proof of Lemma~\ref{l:morse_estm}]
Differentiating~\eqref{e:sl_form2} with respect to $s$, we obtain
\begin{equation}\label{e:sla_e0}
b_{0}(s) - \La \Psi'(s) = b_{0}(f_{a}(s)) f_{a}'(s).
\end{equation}
Setting $f_{a}'(s) = 0$ gives $b_{0}(s) = \La \Psi'(s)$.  Since $b_{0}$ is bounded above
and bounded away from $0$,~\pref{li:a2}--\pref{li:a4} imply~\pref{li:morse_1}.  Solving
for $f_{a}'(s)$, we have
\begin{equation}\label{e:sla_e1}
f_{a}'(s) = \frac{b_{0}(s) - \La \Psi'(s)}{b_{0} (f_{a}(s))}.
\end{equation}
On $\mbb{S} \setminus C_{\frac{1}{2} \xi} (f_{a})$ we have $|\Psi'(s)| > K \xi$
using~\pref{li:morse_1}, \pref{li:a2}, and~\pref{li:a4}.  Estimate~\pref{li:morse_3} now
follows from~\eqref{e:sla_e1}.

Differentiating~\eqref{e:sla_e0} with respect to $s$, we obtain
\begin{equation}\label{e:sla_e2}
b_{0}'(s) - \La \Psi''(s) - b_{0}'(f_{a}(s)) [f_{a}'(s)]^{2} = b_{0}(f_{a}(s)) f_{a}''(s).
\end{equation}
For all $s \in C_{\xi} (f_{a})$, we have $|\Psi'(s)| < K \xi$ by~\pref{li:a1}
and~\pref{li:morse_1}.  This implies that $|f_{a}'(s)| < K \La^{\frac{1}{4}}$ on $C_{\xi}$
using~\eqref{e:sla_e1}.  Therefore the second term on the left side of~\eqref{e:sla_e2}
dominates and~\pref{li:morse_2} holds.
\end{proof}

\subsubsection{Critical curves}

Assume $\La > \La_{0}$ and let $\De \subset \mbb{S}$ be a parameter interval.  For $a
\in \De$, we have $C(f_{a}) = \{ v_{1} (a), \ldots, v_{q_{0}} (a) \}$ by
Lemma~\ref{l:morse_estm}.  Write $\psup{\ga}{i} (a) = v_{i} (a)$ for $1 \leqs i \leqs
q_{0}$.  For $1 \leqs k \leqs q_{0}$ and $i \in \mbb{N}$, define $\subpsup{\ga}{i}{k} (a)
\defas f_{a}^{i} (\psup{\ga}{k} (a))$.

Differentiating $\subpsup{\ga}{1}{k} (a) = f_{a} (\psup{\ga}{k} (a)) = f(\psup{\ga}{k}
(a), a)$ with respect to $a$,
we have
\begin{equation}\label{e:dcc_1}
\begin{aligned}
\tdfop{a} \subpsup{\ga}{1}{k} (a) &= \pdf{f}{s} (\psup{\ga}{k} (a),a) \cdot \tdfop{a}
\psup{\ga}{k} (a) + \pdf{f}{a} (\psup{\ga}{k} (a),a)\\
&= \pdf{f}{a} (\psup{\ga}{k} (a),a).
\end{aligned}
\end{equation}
Differentiating~\eqref{e:sl_form2} with respect to $a$ and using the fact that $\tdfop{a}
\hat{t} (a) = b_{0}(a)$, we obtain
\begin{equation}\label{e:partial_a_f}
\pdfop{a} f(s,a) = \frac{b_{0}(a)}{b_{0}(f(s,a))}.
\end{equation}
Thus 
\begin{equation*}
\tdfop{a} \subpsup{\ga}{1}{k} (a) \geqs \frac{\min_{s \in \mbb{S}}
  b_{0} (s)}{\max_{s \in \mbb{S}} b_{0}(s)} > 0.
\end{equation*}
More generally, an estimate on $\tdfop{a} \subpsup{\ga}{i+1}{k} (a)$ for $i \in \mbb{N}$
follows from the recursive formula
\begin{equation}\label{e:sla_e4}
\tdfop{a} \subpsup{\ga}{i+1}{k} (a) = \pdf{f}{s} (\subpsup{\ga}{i}{k} (a),a) \cdot
\tdfop{a} \subpsup{\ga}{i}{k} (a) + \pdf{f}{a} (\subpsup{\ga}{i}{k} (a),a).
\end{equation}

\begin{lemma}[Growth estimate for derivatives of critical curves]\label{l:d_cc}
There exists $\La_{1} \geqs \La_{0}$ such that the following holds for all $\La >
\La_{1}$.  For any $k \in \{ 1, \ldots, q_{0} \}$ and $i \in \mbb{N}$ such that
$\subpsup{\ga}{j}{k} (a) \in \mbb{S} \setminus C_{\xi} (\Psi)$ for all $1 \leqs j
\leqs i$, then
\begin{equation}\label{e:d_cc}
\left| \tdfop{a} \subpsup{\ga}{i+1}{k} (a) \right| > \left( \frac{K_{5}}{2}
  \La^{\frac{1}{4}} \right)^{i} > \La^{\frac{i}{5}}.
\end{equation}
\end{lemma}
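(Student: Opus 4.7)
The plan is to induct on $i$, exploiting the recursion \eqref{e:sla_e4} that expresses $D_{i+1} := \tfrac{d}{da}\subpsup{\ga}{i+1}{k}(a)$ as
\begin{equation*}
D_{i+1} = f_a'(\subpsup{\ga}{i}{k}(a))\, D_i + \pdfop{a} f(\subpsup{\ga}{i}{k}(a), a).
\end{equation*}
The point is that under the hypothesis, the multiplicative factor $f_a'$ is large, of order $\Lambda^{1/4}$, while the additive term stays uniformly bounded; so the iteration grows geometrically.

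The two ingredients I would assemble first are the bounds on the two terms. For the multiplicative factor, observe that Lemma~\ref{l:morse_estm}\pref{li:morse_1} places $C(f_a)$ within $K_3 \Lambda^{-1}$ of $C(\Psi)$, while $\xi = \Lambda^{-3/4} \gg \Lambda^{-1}$; hence $C_{\xi/2}(f_a) \subset C_\xi(\Psi)$ for $\Lambda$ large enough, and the hypothesis $\subpsup{\ga}{j}{k}(a) \notin C_\xi(\Psi)$ together with Lemma~\ref{l:morse_estm}\pref{li:morse_3} yields $|f_a'(\subpsup{\ga}{j}{k}(a))| > K_5 \Lambda^{1/4}$. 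For the additive term, formula~\eqref{e:partial_a_f} gives $|\partial_a f(s,a)| \leqs M$ uniformly, where $M = \max_s b_0(s)/\min_s b_0(s)$. The triangle inequality then supplies the driving estimate
\begin{equation*}
|D_{i+1}| \geqs K_5 \Lambda^{1/4} |D_i| - M.
\end{equation*}

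The induction itself is then routine. The base case uses $|D_1| = |\partial_a f(\psup{\ga}{k}(a), a)| \geqs \min_s b_0(s)/\max_s b_0(s) > 0$, computed right after \eqref{e:partial_a_f}. Choose $\Lambda_1 \geqs \Lambda_0$ so that (i) the base-case bound $K_5 \Lambda^{1/4} |D_1| - M$ exceeds $(K_5/2)\Lambda^{1/4}$, (ii) the additive loss $M$ is dominated: $(K_5/2)^i \Lambda^{i/4} \geqs M$ for every $i \geqs 1$, and (iii) $(K_5/2)\Lambda^{1/20} > 1$. Then assuming inductively $|D_i| \geqs (K_5/2)^{i-1}\Lambda^{(i-1)/4}$, the driving estimate gives $|D_{i+1}| \geqs 2(K_5/2)^{i}\Lambda^{i/4} - M \geqs (K_5/2)^i \Lambda^{i/4}$, closing the induction. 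The secondary inequality $(K_5/2)^i \Lambda^{i/4} > \Lambda^{i/5}$ is equivalent to $(K_5/2)\Lambda^{1/20} > 1$, which is condition~(iii).

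The main obstacle is the constant bookkeeping in the base case: $|D_1|$ is only bounded below by an $\mscr{O}(1)$ constant, not by something proportional to $\Lambda^{1/4}$, so care is needed to make sure the subsequent multiplication by $K_5\Lambda^{1/4}$ (minus the loss $M$) both launches the geometric regime at the stated rate and absorbs the constants. This is handled by taking $\Lambda_1$ large, possibly at the cost of shrinking $K_5$ — a harmless adjustment that does not affect the qualitative growth estimate.
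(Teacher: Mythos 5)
Your approach is exactly the paper's: feed the recursion \eqref{e:sla_e4} through the expansion estimate of Lemma~\ref{l:morse_estm}\pref{li:morse_3} (applicable under the stated hypothesis because $C_{\xi/2}(f_a) \subset C_\xi(\Psi)$ once $\La$ is large, by Lemma~\ref{l:morse_estm}\pref{li:morse_1}) and the uniform bound $K_6^{-1} \leqs \pdfop{a} f \leqs K_6$ from~\eqref{e:partial_a_f}, then induct.

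The constant concern you raise in the last paragraph is real and worth stating precisely. The launch bound is only $\bigl|\tdfop{a}\subpsup{\ga}{1}{k}(a)\bigr| \geqs K_6^{-1}$, so the first application of the driving estimate gives $\bigl|\tdfop{a}\subpsup{\ga}{2}{k}(a)\bigr| \geqs K_5 K_6^{-1}\La^{1/4} - K_6$, which fails to exceed $(K_5/2)\La^{1/4}$ whenever $K_6 > 2$. The remedy is the relabeling you propose, but be careful about \emph{which} $K_5$ gets adjusted: the $K_5$ in the driving estimate $\bigl|\tdfop{a}\subpsup{\ga}{i+1}{k}\bigr| \geqs K_5\La^{1/4}\bigl|\tdfop{a}\subpsup{\ga}{i}{k}\bigr| - K_6$ is fixed by Lemma~\ref{l:morse_estm} and cannot be shrunk. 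Instead, replace the base $K_5/2$ appearing in the lemma's conclusion by a smaller system constant, e.g.\ $K_5' = K_5/(2K_6)$, and enlarge $\La_1$ so that $K_5'\La^{1/20} > 1$; with this change both the base case and your inductive step close. The modification is harmless for the rest of the paper, since every downstream use of the lemma (Lemma~\ref{l:cc_distor}, Proposition~\ref{p:crit_orbits}) invokes only the weaker bound $\La^{i/5}$.
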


\begin{proof}[Proof of Lemma~\ref{l:d_cc}]
Estimate~\eqref{e:d_cc} follows from~\eqref{e:sla_e4}, estimate~\pref{li:morse_3} from
Lemma~\ref{l:morse_estm}, and the fact that for all $s \in \mbb{S}$ and $a \in \De$ we
have
\begin{equation*}
\pdfop{a} f(s,a) \leqs \frac{\max_{s \in \mbb{S}} b_{0} (s)}{\min_{s \in \mbb{S}}
  b_{0} (s)} \defasr K_{6}.
\end{equation*}
\end{proof}

\begin{lemma}[Distortion estimate for critical curves]\label{l:cc_distor}
  There exists $\La_{2} \geqs \La_{1}$ and $D_{1} > 0$ such that the following holds for
  all $\La > \La_{2}$.  For any $k \in \{ 1, \ldots, q_{0} \}$ and any $n \geqs 2$, let $\De$
  be a parameter interval such that
\begin{enumerate}[(a)]
\item
\label{li:distor_h1}
$\subpsup{\ga}{i}{k} (\De) \subset \mbb{S} \setminus C_{\xi} (\Psi)$ for $1 \leqs i \leqs
n-1$, and
\item
\label{li:distor_h2}
$\ell (\subpsup{\ga}{n-1}{k} (\De)) < \xi$ ($\ell$ denotes Lebesgue measure on
$\mbb{S}$).
\end{enumerate}
Then for all $a, \hat{a} \in \De$, we have
\begin{equation}\label{e:cc_distortion}
\left| \frac{\tdfop{a} \subpsup{\ga}{n}{k} (a)}{\tdfop{a} \subpsup{\ga}{n}{k} (\hat{a})}
\right| < D_{1}.
\end{equation}
If $n=1$, then~\eqref{e:cc_distortion} holds for all $k \in \{ 1, \ldots, q_{0} \}$ and
for all $a, \hat{a} \in \mbb{S}$.
\end{lemma}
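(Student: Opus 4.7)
The plan is to expand $\frac{d}{da}\gamma_n^k(a)$ by iterating \eqref{e:sla_e4}, factor out the dominant term, and control the ratio at $a$ versus $\hat a$ using backward contraction inherited from the expansion estimate in Lemma~\ref{l:morse_estm}\pref{li:morse_3}. The case $n=1$ is immediate: \eqref{e:dcc_1} and \eqref{e:partial_a_f} give $\frac{d}{da}\gamma_1^k(a) = b_0(a)/b_0(f_a(\gamma^k(a)))$, which is bounded above and below by $\max b_0 / \min b_0$ and its reciprocal, yielding a universal distortion constant.

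For $n \geqs 2$, I would iterate \eqref{e:sla_e4} using the initial condition $\frac{d}{da}\gamma_1^k(a) = \partial_a f(\gamma^k(a),a)$ (the $\partial_s f$-term drops because $\gamma^k(a)$ is a critical point of $f_a$), obtaining
\begin{equation*}
\tdf{\gamma_n^k}{a}(a) = (f_a^{n-1})'(\gamma_1^k(a)) \cdot \partial_a f(\gamma^k(a),a) \cdot \bigl[1 + R_n(a)\bigr],
\end{equation*}
where $R_n(a) = \sum_{j=1}^{n-1} \partial_a f(\gamma_j^k(a),a) / \bigl[(f_a^j)'(\gamma_1^k(a)) \cdot \partial_a f(\gamma^k(a),a)\bigr]$. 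By hypothesis~\pref{li:distor_h1} together with Lemma~\ref{l:morse_estm}\pref{li:morse_1}, we have $\gamma_i^k(\Delta) \subset \mbb{S} \setminus C_{\xi/2}(f_a)$ for $1 \leqs i \leqs n-1$ once $\Lambda$ is large, so Lemma~\ref{l:morse_estm}\pref{li:morse_3} yields $|(f_a^j)'(\gamma_1^k(a))| \geqs (K_5 \Lambda^{1/4})^j$. Combined with the uniform bounds on $\partial_a f$ from \eqref{e:partial_a_f}, the series defining $R_n(a)$ is dominated by a geometric series in $\Lambda^{-1/4}$, so $|R_n(a)| \leqs 1/2$ uniformly in $n$ and $a$ for $\Lambda$ sufficiently large. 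Hence the ratio \eqref{e:cc_distortion} factors as
\begin{equation*}
\frac{(f_a^{n-1})'(\gamma_1^k(a))}{(f_{\hat a}^{n-1})'(\gamma_1^k(\hat a))} \cdot \frac{\partial_a f(\gamma^k(a),a)}{\partial_a f(\gamma^k(\hat a),\hat a)} \cdot \frac{1 + R_n(a)}{1 + R_n(\hat a)},
\end{equation*}
and the second and third factors are already uniformly bounded.

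The main work is bounding the first factor by a classical telescoping argument. Taking logarithms, $\bigl|\log \prod_{i=1}^{n-1} f_a'(\gamma_i^k(a)) / f_{\hat a}'(\gamma_i^k(\hat a))\bigr| \leqs C \sum_{i=1}^{n-1} \bigl(|\gamma_i^k(a) - \gamma_i^k(\hat a)| + |a - \hat a|\bigr)$, using that $\log|\partial_s f|$ has uniformly bounded $C^1$-norm on $\mbb{S} \setminus C_{\xi/2}(f_a)$ (thanks to the lower bound $|\partial_s f| \geqs K_5 \Lambda^{1/4}$ there and the $C^3$-regularity of $f$). Writing $\gamma_{i+1}^k(a) - \gamma_{i+1}^k(\hat a) = f_a(\gamma_i^k(a)) - f_{\hat a}(\gamma_i^k(\hat a))$ and splitting into an $s$-part (treated by MVT with the lower bound on $|f_a'|$) and an $a$-part, I would iterate backward from hypothesis~\pref{li:distor_h2} to obtain $|\gamma_i^k(a) - \gamma_i^k(\hat a)| \leqs \xi (K_5 \Lambda^{1/4})^{-(n-1-i)} + C'|a - \hat a|$. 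The resulting geometric sum is $O(\xi)$, and the residual $n|a - \hat a|$ is controlled because Lemma~\ref{l:d_cc} applied under hypothesis~\pref{li:distor_h1} forces $\ell(\Delta) < \xi \Lambda^{-(n-2)/5}$, so $n|a - \hat a|$ is uniformly bounded in $n$ (in fact decays). Combining these bounds gives the uniform distortion constant $D_1$.

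The primary technical obstacle will be handling the sign changes of $\partial_s f$ as the orbit visits different monotonicity intervals of $f_a$; the bookkeeping is kept manageable by working with $\log|\partial_s f|$ and using only the magnitude bounds from Lemma~\ref{l:morse_estm}\pref{li:morse_3}, which are sign-blind. A secondary subtlety is verifying that $C_\xi(\Psi) \supset C_{\xi/2}(f_a)$ for large $\Lambda$ (immediate from Lemma~\ref{l:morse_estm}\pref{li:morse_1}) so that hypothesis~\pref{li:distor_h1} indeed places the orbit in the expanding region of $f_a$.
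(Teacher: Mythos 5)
Your overall skeleton matches the paper's: iterate the recursion~\eqref{e:sla_e4}, extract the dominant product $\prod_i f_a'(\gamma_i^k(a))$, take logs, telescope, and control the resulting sum by backward contraction from hypothesis~(\ref{li:distor_h2}). The $n=1$ case, the closed-form expansion with the series $R_n$, its $O(\Lambda^{-1/4})$ smallness, and the $\ell(\Delta) < \xi\La^{-(n-2)/5}$ observation are all correct and in the same spirit as the paper (which expresses the remainder multiplicatively, step by step, rather than as a single series, but that is cosmetic).

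However, there is a genuine gap at the heart of the telescoping estimate. You assert that $\log|f_a'|$ has \emph{uniformly bounded} $C^1$-norm on $\mbb{S}\setminus C_{\xi/2}(f_a)$, justified by the lower bound $|f_a'|\geqs K_5\La^{1/4}$ and ``$C^3$-regularity of $f$.'' This is false: from~\eqref{e:sla_e1},
\begin{equation*}
\frac{d}{ds}\log|f_a'(s)| = \frac{b_0'(s)-\La\Psi''(s)}{b_0(s)-\La\Psi'(s)} - \frac{b_0'(f_a(s))}{b_0(f_a(s))}\,f_a'(s).
\end{equation*}
The first term is $O(\La^{3/4})$ (numerator $O(\La)$, denominator $=b_0(f_a(s))f_a'(s)$ of size $\geqs \La^{1/4}$), but the second term is $O(1)\cdot|f_a'(s)|$, which can be as large as $O(\La)$ on $\mbb{S}\setminus C_{\xi/2}(f_a)$. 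The map $f_a$ is \emph{not} $C^3$ with constants uniform in $\La$; its derivatives scale with $\La$. If you plug the true Lipschitz bound $O(\La)$ into your estimate, the telescoping sum becomes $O(\La)\sum_i|\gamma_i^k(a)-\gamma_i^k(\hat a)| = O(\La\cdot\xi) = O(\La^{1/4})$, which is unbounded, and the argument collapses. The paper sidesteps this by never using a sup-bound on $d\log|f_a'|/ds$: it writes out $f_a'(s_i)-f_{\hat a}'(\hat s_i)$ explicitly, and the dangerous contribution appears as $f_{\hat a}'(\hat s_i)\cdot(b_0(s_{i+1})-b_0(\hat s_{i+1}))$, so after dividing by $|f_{\hat a}'(\hat s_i)|$ it contributes only $O(|s_{i+1}-\hat s_{i+1}|)$ rather than $O(\La)|s_i-\hat s_i|$. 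Equivalently, the second term in $d\log|f_a'|/ds$ must be \emph{integrated}, $\int_{\hat s_i}^{s_i}|f_a'|\,ds = |f_a(s_i)-f_a(\hat s_i)| \leqs |s_{i+1}-\hat s_{i+1}|+O(|a-\hat a|)$, rather than estimated by a pointwise sup. Without this cancellation, your first factor is not controlled, and the distortion constant $D_1$ is not obtained.
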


\begin{proof}[Proof of Lemma~\ref{l:cc_distor}]
For $n=1$ and $a, \hat{a} \in \mbb{S}$, the estimate
\begin{equation*}
\left| \frac{\tdfop{a} \subpsup{\ga}{1}{k} (a)}{\tdfop{a} \subpsup{\ga}{1}{k} (\hat{a})}
\right| < K_{6}^{2}
\end{equation*}
follows from~\eqref{e:dcc_1} and~\eqref{e:partial_a_f}.  For $n \geqs 2$ and $a, \hat{a}
\in \De$, let $s_{i} = \subpsup{\ga}{i}{k} (a)$ and $\hat{s}_{i} = \subpsup{\ga}{i}{k}
(\hat{a})$.  We have
\begin{equation*}
\left| \frac{\tdfop{a} s_{i}}{\tdfop{a} \hat{s}_{i}} \right| = \left|
  \frac{f_{a}'(s_{i-1}) \cdot \tdfop{a} s_{i-1} + \pdfop{a} f_{a}
    (s_{i-1})}{f_{\hat{a}}'(\hat{s}_{i-1}) \cdot \tdfop{a} \hat{s}_{i-1} + \pdfop{a}
    f_{\hat{a}} (\hat{s}_{i-1})} \right| = \left| \frac{f_{a}'(s_{i-1}) \cdot \tdfop{a}
    s_{i-1}}{f_{\hat{a}}'(\hat{s}_{i-1}) \cdot \tdfop{a} \hat{s}_{i-1}} \right| \left( 1 +
\mcal{O} (\La^{-\frac{1}{5} (i-1)}) \right).
\end{equation*}
This implies the estimate
\begin{align*}
\log \left| \frac{\tdfop{a} s_{n}}{\tdfop{a} \hat{s}_{n}} \right| &= \log \left|
  \frac{\tdfop{a} s_{1}}{\tdfop{a} \hat{s}_{1}} \right| + \sum_{i=1}^{n-1} \log \left|
  \frac{f_{a}'(s_{i})}{f_{\hat{a}}' (\hat{s}_{i})} \right| + \sum_{i=1}^{n-1} \log \left(
  1 + \mcal{O} (\La^{-\tfrac{i}{5}}) \right)\\
&\leqs \sum_{i=1}^{n-1} \frac{|f_{a}'(s_{i}) -
  f_{\hat{a}}'(\hat{s}_{i})|}{|f_{\hat{a}}'(\hat{s}_{i})|} + \mcal{O} (1).
\end{align*}
The equality
\begin{equation*}
|f_{a}'(s_{i}) - f_{\hat{a}}'(\hat{s}_{i})| = \left| \frac{1}{b_{0}(f_{a} (s_{i}))} \left(
    (b_{0} (s_{i+1}) - b_{0} (\hat{s}_{i+1})) f_{\hat{a}}'(\hat{s}_{i}) + \La
    (\Psi'(s_{i}) - \Psi'(\hat{s}_{i})) + (b_{0} (\hat{s}_{i}) - b_{0} (s_{i})) \right)
\right|
\end{equation*}
implies the estimate
\begin{align*}
\log \left| \frac{\tdfop{a} s_{n}}{\tdfop{a} \hat{s}_{n}} \right| &\leqs K
\sum_{i=1}^{n-1} |s_{i+1} - \hat{s}_{i+1}| + K \La^{\frac{3}{4}} \sum_{i=1}^{n-1} |s_{i} -
  \hat{s}_{i}| + K \La^{-\frac{1}{4}} \sum_{i=1}^{n-1} |\hat{s}_{i} - s_{i}| + \mcal{O}
  (1)\\
&= K \left( |s_{n} - \hat{s}_{n}| + \sum_{i=2}^{n-1} |s_{i} - \hat{s}_{i}| \right) + K \La^{\frac{3}{4}} \sum_{i=1}^{n-1} |s_{i} -
  \hat{s}_{i}| + K \La^{-\frac{1}{4}} \sum_{i=1}^{n-1} |\hat{s}_{i} - s_{i}| + \mcal{O}
  (1)\\
&\leqs K |s_{n} - \hat{s}_{n}| + K |s_{n-1} - \hat{s}_{n-1}| \sum_{i=0}^{n-3}
\La^{-\frac{1}{5} i} + K |s_{n-1} - \hat{s}_{n-1}| (\La^{\frac{3}{4}} +
\La^{-\frac{1}{4}}) \sum_{i=0}^{n-2} \La^{-\frac{1}{5} i} + \mcal{O} (1)\\
&= \mcal{O} (1).
\end{align*}
\end{proof}

\subsubsection{Verification of~\bpref{li:g4}: Definition~\ref{d:mis}\pref{li:mb}}

We prove the existence of a parameter $a^{*}$ such that $f_{a^{*}}$ satisfies
Definition~\ref{d:mis}\pref{li:mb}.  We will then show that if $\La$ is sufficiently
large, then for any parameter $a$, if $f_{a}$ satisfies
Definition~\ref{d:mis}\pref{li:mb}, then $f_{a} \in \mscr{M}$.  

\begin{proposition}\label{p:crit_orbits}
There exists $\La_{3} \geqs \La_{2}$ such that if $\La \geqs \La_{3}$ and $\De \subset
\mbb{S}$ is a parameter interval satisfying $\ell (\De) = 3 D_{1} K_{6} q_{0} \xi$,
then there exists $a^{*} \in \De$ such that for all $c \in C(f_{a^{*}})$, $f_{a^{*}}^{n}
(c) \in \mbb{S} \setminus C_{\xi} (\Psi)$ for all $n \in \mbb{N}$.
\end{proposition}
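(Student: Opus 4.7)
I would carry out a Misiurewicz-style parameter-exclusion argument. Define the nested sets
$$\De = \De^{(0)} \supset \De^{(1)} \supset \De^{(2)} \supset \cdots, \qquad \De^{(n)} = \bigl\{a \in \De : \subpsup{\ga}{i}{k}(a) \in \mbb{S} \setminus C_{\xi}(\Psi) \text{ for } 1 \leqs i \leqs n,\ 1 \leqs k \leqs q_{0}\bigr\}.$$
Any $a^{*} \in \bigcap_{n \geqs 1} \De^{(n)}$ satisfies the conclusion, so the proposition reduces to the measure estimate
$\sum_{n \geqs 1} \ell(\De^{(n-1)} \setminus \De^{(n)}) < \ell(\De) = 3 D_{1} K_{6} q_{0} \xi$,
which I aim to verify once $\La_{3}$ is chosen sufficiently large.

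For the base case $n = 1$, from~\eqref{e:dcc_1} and~\eqref{e:partial_a_f} one has $|\tdfop{a}\subpsup{\ga}{1}{k}(a)| \geqs K_{6}^{-1}$ on all of $\mbb{S}$; combined with $\ell(C_{\xi}(\Psi)) \leqs 2 q_{0} \xi$ and the $n = 1$ assertion of Lemma~\ref{l:cc_distor}, this gives $\ell(\De \setminus \De^{(1)}) \leqs C_{0} q_{0}^{2} \xi$ for a constant $C_{0}$ depending only on $K_{6}$. For the inductive step $n \geqs 2$, I would fix $k$ and partition each component of $\De^{(n-1)}$ into maximal subintervals $I$ with $\ell(\subpsup{\ga}{n-1}{k}(I)) < \xi$. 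Hypothesis~\pref{li:distor_h1} of Lemma~\ref{l:cc_distor} is automatic since $I \subset \De^{(n-1)}$, so Lemma~\ref{l:cc_distor} yields distortion bound $D_{1}$ for $\tdfop{a}\subpsup{\ga}{n}{k}$ on $I$, and Lemma~\ref{l:d_cc} yields $|\tdfop{a}\subpsup{\ga}{n}{k}(a)| > \La^{(n-1)/5}$. Strict monotonicity of $\subpsup{\ga}{n}{k}|_{I}$ together with a change of variables produces
$$\ell\bigl(\{a \in I : \subpsup{\ga}{n}{k}(a) \in C_{\xi}(\Psi)\}\bigr) \leqs \frac{D_{1}}{\La^{(n-1)/5}} \, \ell\bigl(\subpsup{\ga}{n}{k}(I) \cap C_{\xi}(\Psi)\bigr).$$
Summing across the $N_{n}$ subintervals in the partition and across $k \in \{1, \ldots, q_{0}\}$, and controlling $\ell(\subpsup{\ga}{n}{k}(I) \cap C_{\xi}(\Psi))$ by the wrapping count of $\subpsup{\ga}{n}{k}(I)$ around $\mbb{S}$ (using the upper bound $|f_{a}'(s)| \leqs C \La$ and iterating the recursion~\eqref{e:sla_e4}), one extracts an $n$-independent factor $\La^{-\ka}$ per step for some $\ka > 0$. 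The geometric series $\sum_{n} C q_{0}^{2} D_{1}^{2} \xi \La^{-\ka n}$ is then $O(\xi)$, hence strictly less than $3 D_{1} K_{6} q_{0} \xi$ once $\La > \La_{3}$.

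\textbf{Main obstacle.} The delicate point is the cancellation inside the inductive step. The naive per-interval estimate $D_{1} \La^{-(n-1)/5} \cdot 2 q_{0} \xi$ is inflated by two competing factors: (i)~the number of partition intervals, which is $N_{n} = O\bigl(\La^{(n-2)/5}\, \ell(\De)/\xi\bigr)$ by the derivative lower bound on $\subpsup{\ga}{n-1}{k}$, and (ii)~the wrapping of $\subpsup{\ga}{n}{k}(I)$ around $\mbb{S}$, so that $\ell(\subpsup{\ga}{n}{k}(I) \cap C_{\xi}(\Psi))$ cannot be bounded by $2 q_{0} \xi$ alone but only by an expression of the form $\tfrac{2 q_{0} \xi}{|\mbb{S}|}\ell(\subpsup{\ga}{n}{k}(I)) + 2 q_{0} \xi$. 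Obtaining a uniformly positive net power of $\La^{-1}$ after these factors cancel requires using the full expansion rate $\La^{(n-1)/5}$ along the critical curves rather than the weaker single-step expansion $\La^{1/4}$ from Lemma~\ref{l:morse_estm}\pref{li:morse_3}; this is also what forces $\La_{3}$ to be chosen strictly larger than $\La_{2}$.
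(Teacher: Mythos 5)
Your proposal takes a genuinely different route from the paper. You aim for a parameter-exclusion argument that produces a set of positive Lebesgue measure of good parameters, by bounding $\sum_{n\geq 1}\ell(\Delta^{(n-1)}\setminus\Delta^{(n)})$ strictly below $\ell(\Delta)$. The paper instead constructs a single nested sequence of \emph{intervals} $\Delta=\Delta_0\supset\Delta_1\supset\cdots$, equipped with counters $(i_{1,n},\ldots,i_{q_0,n})$ recording how far each critical orbit has been verified to avoid $C_\xi(\Psi)$; it advances a counter only when a distortion condition (\pref{li:i1}) and a no-wrap condition (\pref{li:i2}) hold, halving $\Delta_n$ otherwise, and then extracts $a^*\in\bigcap_n\Delta_n$ by compactness. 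No measure estimate is attempted or needed.

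There is a genuine gap in your plan, and it is precisely at the point you flag as the main obstacle. After the change of variables and distortion, the excluded measure on a partition interval $I$ at step $n$ splits into a ``fractional-wrap'' piece and a ``full-wrap'' piece. Your $\Lambda^{-(n-1)/5}$ expansion handles the fractional piece. But the full-wrap piece contributes at most
$D_1\,\ell(I)\,\frac{\ell(C_\xi(\Psi))}{|\mathbb{S}|}$,
and after summing over $I$ and $k$ this is
$\lesssim D_1 q_0\,\frac{2q_0\xi}{|\mathbb{S}|}\,\ell(\Delta^{(n-1)})$,
which is \emph{independent of $n$}. No power of $\Lambda^{-1}$ accumulates with $n$ here, because the density of $C_\xi(\Psi)$ in any arc that wraps once or more around $\mathbb{S}$ is bounded below by roughly $\ell(C_\xi(\Psi))/|\mathbb{S}|$ no matter how far along the orbit you are. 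The best bound of this shape is $\ell(\Delta^{(n-1)}\setminus\Delta^{(n)})\leq\alpha\,\ell(\Delta^{(n-1)})$ with $\alpha$ a fixed small constant, and then $\sum_n\ell(\Delta^{(n-1)}\setminus\Delta^{(n)})=\ell(\Delta)-\lim_n\ell(\Delta^{(n)})$ is consistent with $\lim_n\ell(\Delta^{(n)})=0$. So the claimed geometric series $\sum_n C q_0^2 D_1^2\xi\Lambda^{-\kappa n}$ does not materialize, and the strict inequality you need is not established.

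The deeper issue is that your target conclusion is false: the set $\bigcap_n\Delta^{(n)}=\{a:\subpsup{\ga}{n}{k}(a)\notin C_\xi(\Psi)\ \forall n,\forall k\}$ is the set of parameters whose critical orbits avoid a fixed open set forever, and for a family of expanding circle maps with bounded distortion such a set is generically a Lebesgue-null Cantor set. The proposition only asserts nonemptiness, which is exactly what the paper's compactness argument delivers. To resurrect a measure-theoretic proof you would need the Benedicks--Carleson machinery of bound periods and free returns (where the neighborhood shrinks along the induction, not stays fixed at scale $\xi$), which is a much heavier apparatus than the proposition requires. The paper's counter-advancing, interval-halving construction is the economical choice here, and it is not a cosmetic variant of yours: it never estimates $\ell(\Delta^{(n-1)}\setminus\Delta^{(n)})$ and has no wrapping problem, because condition \pref{li:i2} ensures the relevant image lies in less than half the circle before a counter is allowed to advance.
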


\begin{proof}[Proof of Proposition~\ref{p:crit_orbits}]
  We inductively construct a nested sequence of parameter intervals $\De = \De_{0} \supset
  \De_{1} \supset \De_{2} \supset \cdots$ such that $a^{*} \in \bigcap_{i=0}^{\infty}
  \De_{i}$ has the desired property.

\begin{definition}\label{d:adm_cfg}
  The $(q_{0}+1)$-tuple $(\De_{n}; i_{1,n}, \ldots, i_{q_{0},n})$ is called an {\bfseries
    \itshape admissible configuration} if $\De_{n}$ is a subinterval of $\De_{0}$ and if
  for every $k \in \{ 1, \ldots, q_{0} \}$, $i_{k,n} \leqs n$ and the following conditions
  are satisfied.
\begin{list}{\bfseries (\themrealm)}
{
\usecounter{mrealm}
\setlength{\topsep}{1.5ex plus 0.2ex minus 0.2ex}
\setlength{\labelwidth}{0.9cm}
\setlength{\leftmargin}{1.1cm}
\setlength{\labelsep}{0.2cm}
\setlength{\rightmargin}{0.0cm}
\setlength{\parsep}{0.5ex plus 0.2ex minus 0.1ex}
\setlength{\itemsep}{0ex plus 0.2ex}
}
\item
\label{li:m1}
$\subpsup{\ga}{i}{k} (\De_{n}) \cap C_{\xi} (\Psi) = \emptyset$ for all $i \leqs i_{k,n}$
\item
\label{li:m2}
For all $a, \hat{a} \in \De_{n}$, we have the distortion estimate
\begin{equation*}
\left| \frac{\tdfop{a} \subpsup{\ga}{i_{k,n}}{k} (a)}{\tdfop{a} \subpsup{\ga}{i_{k,n}}{k}
    (\hat{a})} \right| < D_{1}.
\end{equation*}
\item
\label{li:m3}
$\ell \big( \subpsup{\ga}{i_{k,n} + 1}{k} (\De_{n}) \big) \geqs 3 D_{1} q_{0} \xi$
\end{list}
\end{definition}

We inductively construct admissible configurations for all $n \in \mbb{N}$ such that
$i_{k,n} \to \infty$ as $n \to \infty$ for every $k$.  We begin with $n=1$.  Let
\begin{equation*}
\tilde{d} \defas \min_{\substack{s, t \in C(\Psi)\\ s \neq t}} |s-t|.
\end{equation*}
We assume that $3 D_{1} K_{6}^{2} q_{0} \xi < \frac{1}{2} \tilde{d}$.  Let $i_{k,1} = 1$
for all $k$.  We choose $\De_{1}$ as follows.  We have
\begin{equation*}
\tdfop{a} \subpsup{\ga}{1}{k} (a) = \frac{b_{0} (a)}{b_{0} (\subpsup{\ga}{1}{k} (a))},
\end{equation*}
so
\begin{equation*}
3 D_{1} q_{0} \xi \leqs \ell (\subpsup{\ga}{1}{k} (\De_{0})) \leqs 3
D_{1} K_{6}^{2} q_{0} \xi < \frac{1}{2} \tilde{d}.
\end{equation*}
Consequently, $\subpsup{\ga}{1}{k} (\De_{0})$ meets at most one component of $C_{\xi}
(\Psi)$ and we have 
\begin{equation*}
\frac{\ell ((\subpsup{\ga}{1}{k} | \De_{0})^{-1} (C_{\xi} (\Psi)))}{\ell (\De_{0})} \leqs
\frac{2}{3 q_{0}}.
\end{equation*}
Even in the worst-case scenario in which the $q_{0}$ intervals $\{
(\subpsup{\ga}{1}{k})^{-1} (C_{\xi} (\Psi)) : 1 \leqs k \leqs q_{0} \}$ are evenly spaced
in $\De_{0}$, there exists a subinterval $\De_{1}$ of $\De_{0}$ with $\ell (\De_{1}) \geqs
\frac{D_{1} K_{6} q_{0} \xi}{q_{0} + 1}$ such that $\subpsup{\ga}{1}{k} (\De_{1}) \cap
C_{\xi} (\Psi) = \emptyset$ for all $k$.  Property~\pref{li:m1} holds by design
and~\pref{li:m2} follows from Lemma~\ref{l:cc_distor}.  Property~\pref{li:m3} holds if
$\La$ is such that
\begin{equation*}
\ell (\subpsup{\ga}{2}{k} (\De_{1})) \geqs \La^{\frac{1}{5}} \ell (\De_{1}) \geqs 3 D_{1}
q_{0} \xi.
\end{equation*}

Now assume that for $n \in \mbb{N}$ we are given an admissible configuration $(\De_{n};
i_{1,n}, \ldots, i_{q_{0},n})$.  We construct an admissible configuration at step $n+1$ as
follows.  Partition the set $\{ 1, \ldots, q_{0} \}$ into $2$ sets: $A$, the set of
indices that are `ready to advance', and $\{ 1, \ldots, q_{0} \} \setminus A$, the set of
indices that are not ready to advance.  The index $k$ is in $A$ if~\pref{li:i1}
and~\pref{li:i2} hold:
\begin{list}{\bfseries (\theirealm)}
{
\usecounter{irealm}
\setlength{\topsep}{1.5ex plus 0.2ex minus 0.2ex}
\setlength{\labelwidth}{0.9cm}
\setlength{\leftmargin}{1.1cm}
\setlength{\labelsep}{0.2cm}
\setlength{\rightmargin}{0.0cm}
\setlength{\parsep}{0.5ex plus 0.2ex minus 0.1ex}
\setlength{\itemsep}{0ex plus 0.2ex}
}
\item
\label{li:i1}
$\ell (\subpsup{\ga}{i_{k,n}}{k} (\De_{n})) < \xi$ (distortion estimate holds for the next
iterate)
\item
\label{li:i2}
$\ell (\subpsup{\ga}{i_{k,n} + 1}{k} (\De_{n})) < \frac{1}{2} \tilde{d}$ (image of the
next iterate meets at most one component of $C_{\xi} (\Psi)$)
\end{list}
Suppose that $A \neq \emptyset$.  In this case, set
\begin{equation*}
i_{k,n+1} =
\begin{cases}
i_{k,n} + 1, &\text{if } k \in A;\\
i_{k,n}, &\text{if } k \in \{ 1, \ldots, q_{0} \} \setminus A.
\end{cases}
\end{equation*}
We now find $\De_{n+1}$ so that $(\De_{n+1}; i_{1,n+1}, \ldots, i_{q_{0},n+1})$ is an
admissible configuration.  Let $k \in A$.  Using~\pref{li:m3} and~\pref{li:i2}, we have
\begin{equation*}
3 D_{1} q_{0} \xi \leqs \ell (\subpsup{\ga}{i_{k,n} + 1}{k} (\De_{n})) < \frac{1}{2}
\tilde{d}.
\end{equation*}
This implies that the fraction of $\subpsup{\ga}{i_{k,n} + 1}{k} (\De_{n})$ in $C_{\xi}
(\Psi)$ is bounded above by $\frac{2 \xi}{3 D_{1} q_{0} \xi} = \frac{2}{3 D_{1} q_{0}}$.
Using~\pref{li:i1} and Lemma~\ref{l:cc_distor}, we have
\begin{equation*}
\frac{\ell ((\subpsup{\ga}{i_{k,n} + 1}{k} | \De_{n})^{-1} (C_{\xi} (\Psi)))}{\ell
  (\De_{n})} \leqs \frac{2}{3 q_{0}}.
\end{equation*}
Arguing as in the $n=1$ case, there exists a subinterval $\De_{n+1}$ of $\De_{n}$ such
that $\ell (\De_{n+1}) \geqs \frac{1}{3 (q_{0}+1)} \ell (\De_{n})$ and for all $k \in A$,
$\subpsup{\ga}{i_{k,n} + 1}{k} (\De_{n+1}) \cap C_{\xi} (\Psi) = \emptyset$.  For $k \in
A$,~\pref{li:m1} holds by design and~\pref{li:m2} follows from~\pref{li:i1}.  The
inequality
\begin{equation*}
\ell (\subpsup{\ga}{i_{k,n} + 1}{k} (\De_{n+1})) \geqs \frac{D_{1}^{-1}}{3 (q_{0} + 1)} (3
D_{1} q_{0} \xi) = \frac{q_{0}}{q_{0}+1} \xi
\end{equation*}
implies that~\pref{li:m3} holds if $\La$ is such that
\begin{equation*}
\ell (\subpsup{\ga}{i_{k,n} + 2}{k} (\De_{n+1})) \geqs \left( \frac{q_{0} \xi}{q_{0}+1}
\right) \left( \frac{K_{5}}{2} \La^{\frac{1}{4}} \right) \geqs 3 D_{1} q_{0} \xi.
\end{equation*}

Now let $k \in \{ 1, \ldots, q_{0} \} \setminus A$.  Properties~\pref{li:m1}
and~\pref{li:m2} are inherited from step $n$.  If~\pref{li:i1} fails for index $k$,
then~\pref{li:m2} gives
\begin{equation*}
\ell (\subpsup{\ga}{i_{k,n}}{k} (\De_{n+1})) \geqs \frac{\xi}{3 D_{1} (q_{0} + 1)},
\end{equation*}
so index $k$ satisfies~\pref{li:m3} if $\La$ is such that
\begin{equation*}
\ell (\subpsup{\ga}{i_{k,n} + 1}{k} (\De_{n+1})) \geqs \left( \frac{\xi}{3 D_{1} (q_{0} +
    1)} \right) \left( \frac{K_{5}}{2} \La^{\frac{1}{4}} \right) \geqs 3 D_{1} q_{0} \xi.
\end{equation*}
If~\pref{li:i1} holds but~\pref{li:i2} fails, then~\pref{li:m3} holds for index $k$ if
$\La$ is such that
\begin{equation*}
\ell (\subpsup{\ga}{i_{k,n} + 1}{k} (\De_{n+1})) \geqs \left( \frac{1}{3 D_{1} (q_{0} +
    1)} \right) \left( \frac{1}{2} \tilde{d} \right) \geqs 3 D_{1} q_{0} \xi.
\end{equation*}

If $A = \emptyset$, then let $\De_{n}'$ be the left half of $\De_{n}$.  We claim that
$(\De_{n}'; i_{1,n}, \ldots, i_{q_{0},n})$ is an admissible configuration.  For each index
$k$, properties~\pref{li:m1} and~\pref{li:m2} trivially hold.  Property~\pref{li:m3} is
established (for $\La$ sufficiently large) by arguing as above in the $2$ cases
\begin{enumerate}
\item
\pref{li:i1} does not hold, and
\item
\pref{li:i1} holds but~\pref{li:i2} fails.
\end{enumerate}
Repeat the halving process until $A \neq \emptyset$. 
\end{proof}

\subsubsection{Verification of~\bpref{li:g4}: $f_{a^{*}}$ satisfies
  Definition~\ref{d:mis}\pref{li:mb} $\Rightarrow$ $f_{a^{*}} \in
  \mscr{M}$}\label{ss:bind}

We show that if $\La$ is sufficiently large and $a^{*} \in \mbb{S}$ is as in
Proposition~\ref{p:crit_orbits}, then $f_{a^{*}} \in \mscr{M}$.  This implication is a
consequence of Lemma~\ref{l:morse_estm} and the following binding estimate.

\begin{proposition}\label{p:binding_estm}
There exists $K_{7} > 0$ such that for $\La$ sufficiently large and $a^{*}$ as in
Proposition~\ref{p:crit_orbits}, we have the following.  For $c \in C(f_{a^{*}})$ and $s
\in \mbb{S}$ satisfying $|s-c| \leqs \La^{-\frac{11}{12}}$, let $m(s)$ be the smallest
value of $m \in \mbb{Z}^{+}$ such that $|f_{a^{*}}^{m} (s) - f_{a^{*}}^{m} (c)| >
\frac{1}{2} \xi$.  Then $m(s) > 1$ and
\begin{equation*}
|(f_{a^{*}}^{m(s)})'(s)| \geqs (K_{7} \La)^{\frac{m(s)}{16}}.
\end{equation*}
\end{proposition}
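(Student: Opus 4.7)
The plan is to carry out a Benedicks--Carleson style binding argument: I will establish two complementary lower bounds on $|(f_{a^{*}}^{m(s)})'(s)|$, one useful when $|s-c|$ is not too small and one useful when $|s-c|$ is very small, and then combine them by taking a geometric mean, which eliminates the dependence on $|s-c|$. The claim $m(s) > 1$ is immediate: a second differentiation of~\eqref{e:sla_e0} as in~\eqref{e:sla_e2} bounds $\|f_{a^{*}}''\|_{\infty}$ by a constant multiple of $\La$, and since $f_{a^{*}}'(c)=0$, Taylor expansion yields
\begin{equation*}
|f_{a^{*}}(s)-f_{a^{*}}(c)| \leqs \tfrac{1}{2}\|f_{a^{*}}''\|_{\infty}|s-c|^{2} \leqs C \La^{1-11/6} = C \La^{-5/6} < \xi/2
\end{equation*}
for $\La$ large, because $5/6 > 3/4$.

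Next I would establish bounded distortion along the paired orbits $s_{i} := f_{a^{*}}^{i}(s)$ and $c_{i} := f_{a^{*}}^{i}(c)$. The defining property of $m(s)$ gives $|s_{i}-c_{i}| \leqs \xi/2$ for $1 \leqs i \leqs m(s)-1$, and Proposition~\ref{p:crit_orbits} places $c_{i}$ far from $C(\Psi)$; combining with Lemma~\ref{l:morse_estm}\pref{li:morse_1} (which locates $C(f_{a^{*}})$ within $K_{3}\La^{-1}$ of $C(\Psi)$), both $s_{i}$ and $c_{i}$ lie in the expanding region of Lemma~\ref{l:morse_estm}\pref{li:morse_3} for $\La$ large, so $|f_{a^{*}}'(s_{i})|, |f_{a^{*}}'(c_{i})| \geqs K_{5}\La^{1/4}$. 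Writing $\de_{i} := s_{i}-c_{i}$, the lower bound $|\de_{i+1}| \geqs K_{5}\La^{1/4}|\de_{i}|$ summed backwards from $i = m(s)-1$ forces $\sum_{i=1}^{m(s)-1}|\de_{i}| \leqs C\xi$, whereupon the standard telescoping estimate produces a bound
\begin{equation*}
D^{-1} \leqs \prod_{i=1}^{m(s)-1} \frac{|f_{a^{*}}'(s_{i})|}{|f_{a^{*}}'(c_{i})|} \leqs D
\end{equation*}
with $D$ independent of $m(s)$, since the distortion sum is controlled by $\|f_{a^{*}}''\|_{\infty} \sum_{i}|\de_{i}|/|f_{a^{*}}'(c_{i})| \leqs C\La \cdot \xi \cdot \La^{-1/4} = C$.

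Factoring $|(f_{a^{*}}^{m(s)})'(s)| = |f_{a^{*}}'(s)| \prod_{i=1}^{m(s)-1}|f_{a^{*}}'(s_{i})|$, I would derive the two lower bounds. Since $|s-c| \leqs \La^{-11/12} < \xi$ places $s$ inside $C_{\xi}(f_{a^{*}})$, Lemma~\ref{l:morse_estm}\pref{li:morse_2} combined with $f_{a^{*}}'(c)=0$ gives $|f_{a^{*}}'(s)| \geqs K_{4}\La|s-c|$, so together with the $m(s)-1$ expanding factors,
\begin{equation*}
\text{(A)}\qquad |(f_{a^{*}}^{m(s)})'(s)| \geqs C_{1}\La|s-c| \cdot \La^{(m(s)-1)/4}.
\end{equation*}
On the other hand, the distortion-controlled chain rule gives $|s_{m(s)}-c_{m(s)}| \leqs D \prod_{i=1}^{m(s)-1}|f_{a^{*}}'(c_{i})| \cdot |s_{1}-c_{1}|$, and combining $|s_{m(s)}-c_{m(s)}| > \xi/2$ with $|s_{1}-c_{1}| \leqs C\La|s-c|^{2}$ produces $\prod_{i=1}^{m(s)-1}|f_{a^{*}}'(c_{i})| \geqs C\xi/(\La|s-c|^{2})$, hence
\begin{equation*}
\text{(B)}\qquad |(f_{a^{*}}^{m(s)})'(s)| \geqs C_{2}\,\xi/|s-c|.
\end{equation*}
Multiplying (A) and (B) cancels $|s-c|$ and yields $|(f_{a^{*}}^{m(s)})'(s)|^{2} \geqs C_{3}\La \cdot \La^{(m(s)-1)/4} \cdot \xi = C_{3}\La^{m(s)/4}$, so $|(f_{a^{*}}^{m(s)})'(s)| \geqs C_{4}\La^{m(s)/8} \geqs (K_{7}\La)^{m(s)/16}$ for a suitable $K_{7}$ and $\La$ large.

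I expect the main obstacle to be the distortion step. Lemma~\ref{l:morse_estm}\pref{li:morse_3} supplies expansion only outside $C_{\xi/2}(f_{a^{*}})$, whereas Proposition~\ref{p:crit_orbits} controls the critical orbit only modulo $C_{\xi}(\Psi)$; reconciling these, together with the $K_{3}\La^{-1}$ shift between $C(\Psi)$ and $C(f_{a^{*}})$, requires carefully loosening the constants so that the whole segment $[s_{i},c_{i}]$ (rather than just its endpoints) avoids the critical set, and this loss is absorbed into the lower bound on $\La$. Because $m(s)$ is a priori unbounded, the distortion constant $D$ must also be uniform in $m(s)$, which is why I sum the separations $|\de_{i}|$ geometrically backwards in time (using the expansion factor $K_{5}\La^{1/4}$) rather than forwards.
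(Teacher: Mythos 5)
Your proposal is correct and follows the same Benedicks--Carleson binding strategy as the paper: establish $m(s)>1$ by Taylor expansion using $\|f_{a^*}''\|_\infty \lesssim \La$, prove bounded distortion along the paired orbits of $s$ and $c$ (your backward geometric summation of the separations $|\delta_i|$ is exactly the paper's proof of Lemma~\ref{l:space_dstor}), and combine the chain rule with the escape condition $|f_{a^*}^{m(s)}(s)-f_{a^*}^{m(s)}(c)|>\xi/2$. The only genuine difference is the final bookkeeping used to eliminate $|s-c|$. The paper derives the bound $|(f_{a^*}^{m(s)})'(s)|\gtrsim \xi/|s-c|$ (your (B)), and then eliminates $|s-c|$ by \emph{reversing} the escape inequality at time $m(s)-1$: since the orbits are still within $\xi/2$ one step earlier, $(s-c)^2 \lesssim \xi/\bigl(|(f_{a^*}^{m(s)-2})'(f_{a^*}(c))|\,\La\bigr)$, and substituting this upper bound on $|s-c|$ gives the exponent $(m(s)-1)/8$. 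You instead multiply the same $\xi/|s-c|$ bound by a direct chain-rule expansion bound $|(f_{a^*}^{m(s)})'(s)|\geq K_4\La|s-c|(K_5\La^{1/4})^{m(s)-1}$ (using expansion along the forward orbit of $s$ itself rather than of $c$) and take a geometric mean, which cancels $|s-c|$ and yields the marginally cleaner exponent $m(s)/8$. Both exceed $m(s)/16$ for $m(s)\geq 2$. Your route sidesteps the time-$(m(s)-1)$ reversal inequality at the cost of needing the expansion estimate along the $s_i$ (not just the $c_i$); you flag correctly that $s_i$ lies only $\xi/2 - \mcal{O}(\La^{-1})$ from $C(f_{a^*})$ rather than the full $\xi/2$ demanded by Lemma~\ref{l:morse_estm}\pref{li:morse_3}, but since this discrepancy is $\mcal{O}(\La^{-1/4})\cdot\xi$ it is absorbed by loosening the constant $K_5$ — the same constant-level fudge that the paper itself needs in applying Lemma~\ref{l:space_dstor}, whose hypothesis $\pi_i\cap C_{\xi/2}(f_{a^*})=\emptyset$ suffers the identical shift.
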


\begin{proof}[Proof of Proposition~\ref{p:binding_estm}]
We begin with a spatial distortion lemma.

\begin{lemma}[Spatial distortion estimate]\label{l:space_dstor}
  There exists $D_{2} \geqs 1$ such that the following holds for all $a \in \mbb{S}$.  For $s, \hat{s} \in
  \mbb{S}$, let $m \in \mbb{Z}^{+}$ be such that $\pi_{i}$, the segment between
  $f_{a}^{i}(s)$ and $f_{a}^{i}(\hat{s})$, satisfies $\ell (\pi_{i}) < \frac{1}{2}
  \xi$ and $\pi_{i} \cap C_{\frac{1}{2} \xi} (f_{a}) = \emptyset$ for all $0 \leqs i <
  m$.  Then
\begin{equation*}
\left| \frac{(f_{a}^{m})'(s)}{(f_{a}^{m})'(\hat{s})} \right| \leqs D_{2}.
\end{equation*}
\end{lemma}

\begin{proof}[Proof of Lemma~\ref{l:space_dstor}]
Writing $s_{i} = f_{a}^{i} (s)$ and $\hat{s}_{i} = f_{a}^{i} (\hat{s})$ and using
Lemma~\ref{l:morse_estm} and its proof, we have
\begin{align*}
\log \left| \frac{(f_{a}^{m})'(s)}{(f_{a}^{m})'(\hat{s})} \right| &=
\sum_{i=0}^{m-1} \log \left| \frac{f_{a}'(s_{i})}{f_{a}'(\hat{s}_{i})} \right|\\
&\leqs \sum_{i=0}^{m-1} \frac{|f_{a}'(s_{i}) -
  f_{a}'(\hat{s}_{i})|}{|f_{a}'(\hat{s}_{i})|}\\
&\leqs K_{5}^{-1} \La^{-\frac{1}{4}} \left( K_{6} + \frac{\La \norm{\Psi'}_{C^{0}
      (\mbb{S})}}{\min_{w \in \mbb{S}} b_{0} (w)} \right) \sum_{i=0}^{m-1} |s_{i}
- \hat{s}_{i}|\\
&\leqs (K \La^{-\frac{1}{4}} + K \La^{\frac{3}{4}}) |s_{m-1} - \hat{s}_{m-1}|
\sum_{i=0}^{m-1} (K_{5} \La^{\frac{1}{4}})^{-i}\\
&= \mcal{O} (1).
\end{align*}
\end{proof}

Returning to the proof of Proposition~\ref{p:binding_estm}, write $f = f_{a^{*}}$.  We
first show that $m(s) > 1$.  We have
\begin{equation*}
|f(s) - f(c)| = \frac{1}{2} |f''(\ze)| (s-c)^{2}
\end{equation*}
for some $\ze$ satisfying $|\ze - c| \leqs \La^{-\frac{11}{12}}$.  Arguing as in the proof
of Lemma~\ref{l:morse_estm}, $|f''(\ze)| \leqs K \La$.  Therefore
\begin{equation*}
|f(s)-f(c)| \leqs K \La^{-\frac{5}{6}} \leqs \frac{\xi}{2}
\end{equation*}
for $\La$ sufficiently large.

Now assume $m(s) > 1$.  Using Lemma~\ref{l:space_dstor}, we have
\begin{align*}
\frac{\xi}{2} &< |f^{m(s)}(s) - f^{m(s)}(c)|\\
&= |(f^{m(s)-1})'(\ze_{1})| \cdot |f(s)-f(c)| \qquad (\text{for some } \ze_{1} \text{
  between } f(s) \text{ and } f(c))\\
&\leqs D_{2} |(f^{m(s)-1})'(f(c))| \cdot |f(s)-f(c)|
\end{align*}
and therefore
\begin{equation}\label{e:bind_1}
\xi < D_{2} |(f^{m(s)-1})'(f(c))| \cdot |f''(\ze)| \cdot (s-c)^{2}.
\end{equation}
Reversing inequality~\eqref{e:bind_1} at time $m(s)-1$, we have
\begin{equation}\label{e:bind_2}
\xi \geqs D_{2}^{-1} |(f^{m(s)-2})'(f(c))| \cdot |f''(\ze)| \cdot (s-c)^{2}.
\end{equation}
Estimating $|(f^{m(s)-1})'(f(c))|$ from below using~\eqref{e:bind_1} gives
\begin{align}
|(f^{m(s)})'(s)| &= |f'(s)-f'(c)| \cdot |(f^{m(s)-1})'(f(s))|
\notag \\
&\geqs D_{2}^{-1} |(f^{m(s)-1})'(f(c))| \cdot |f''(\ze_{4})| \cdot |s-c| \qquad (\text{for
  some } \ze_{4} \text{ between } s \text{ and } c)
\notag \\
&\geqs \frac{\xi}{D_{2}^{2} |s-c|} \left( \frac{|f''(\ze_{4})|}{|f''(\ze)|} \right).
\label{e:bind_3}
\end{align}
Arguing as in the proof of Lemma~\ref{l:morse_estm}, $\frac{|f''(\ze_{4})|}{|f''(\ze)|}
\geqs K > 0$ since $\ze_{4}$ and $\ze$ are between $s$ and $c$.  Using this fact and
estimating $|s-c|^{-1}$ from below using~\eqref{e:bind_2},~\eqref{e:bind_3} implies
\begin{align*}
|(f^{m(s)})'(s)| &\geqs \frac{K \xi}{D_{2}^{2}} \left( \frac{D_{2}^{-1}
    |(f^{m(s)-2})'(f(c))| \cdot |f''(\ze)|}{\xi} \right)^{\frac{1}{2}}\\
&\geqs (K \La)^{\frac{m(s)}{8} - \frac{1}{8}}\\
&\geqs (K \La)^{\frac{m(s)}{16}}.
\end{align*}
\end{proof}

\subsubsection{Verification of~\bpref{li:g5} and~\bpref{li:g7}}

The following lemma facilitates the verification of~\bpref{li:g5}.

\begin{lemma}[\cite{TpTcYls1992, TpTcYls1994}]\label{l:ptreform}
Let $f = f_{a^{*}}$.  Suppose that for all $x \in C(f_{a^{*}})$, we
have
\begin{equation*}
\sum_{k = 0}^{\infty} \frac{1}{|(f^{k})' (f(x))|} < \infty.
\end{equation*}
Then for each $x \in C(f_{a^{*}})$,
\begin{equation*}
\sum_{k = 0}^{\infty} \frac{[(\pdop{a} f_{a}) (f^{k}
(x))]_{a = a^{*}}}{(f^{k})' (f(x))} = \left[ \frac{d}{da}
f_{a} (x(a)) - \frac{d}{da}
p(a) \right]_{a = a^{*}}.
\end{equation*}
\end{lemma}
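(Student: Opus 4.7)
The plan is to introduce a sequence of finite-itinerary approximations $q_N(a)$ to $p(a)$, compute their $a$-derivatives at $a^{*}$ explicitly by implicit differentiation, and then pass to the limit $N \to \infty$ using the expansion along the orbit of $p$. Write $f = f_{a^{*}}$ and $p = f(x)$. For each $N \geqs 1$, define $q_N(a)$ to be the unique point whose symbolic itinerary under $f_a$ matches that of $p$ under $f$ in the first $N$ symbols and which satisfies $f_a^N(q_N(a)) = f^N(p)$; equivalently, $q_N(a)$ is the preimage of the fixed target $f^N(p)$ under the appropriate branch of $f_a^N$. Since the selected branch avoids critical points of $f_{a^{*}}$, the implicit function theorem yields that $q_N$ depends smoothly on $a$ near $a^{*}$, with $q_N(a^{*}) = p$.

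Next I would implicitly differentiate $f_a^N(q_N(a)) = f^N(p)$ with respect to $a$ at $a^{*}$. Since the right side does not depend on $a$,
\[
\left[ \frac{\partial}{\partial a} f_a^N(z) \right]_{z = p,\; a = a^{*}} + (f^N)'(p) \cdot q_N'(a^{*}) = 0.
\]
Expanding the first term via the chain rule, dividing by $(f^N)'(p)$, using the telescoping identity $(f^{N-1-k})'(f^{k+1}(p))/(f^N)'(p) = 1/(f^{k+1})'(f(x))$ (recall $p = f(x)$), and re-indexing $j = k+1$, I obtain
\[
q_N'(a^{*}) = -\sum_{j=1}^{N} \frac{[(\partial_a f_a)(f^j(x))]_{a = a^{*}}}{(f^j)'(f(x))}.
\]
By the summability hypothesis together with the boundedness of $\partial_a f_a$, this sum converges absolutely as $N \to \infty$.

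The main technical step, and the chief obstacle I anticipate, is to show $C^1$ convergence $q_N \to p$ on a neighborhood of $a^{*}$, which will yield $q_N'(a^{*}) \to p'(a^{*})$. The orbit $\{ f^k(p) \}_{k \geqs 0}$ lies in the uniformly expanding set $\mbb{S} \setminus U$ by Misiurewicz conditions (A) and (B) in Definition~\ref{d:mis}, so hyperbolic continuation of this orbit gives exponential convergence of $q_N(a)$ to the unique point whose full itinerary under $f_a$ matches that of $p$ under $f$, namely $p(a)$, with corresponding control on the first $a$-derivatives. One clean way to implement this is via the graph transform on the inverse branches of $f_a$ along the expanding orbit of $p$, which is a uniform contraction in $C^1$.

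Finally, since $x$ is a critical point, $f'(x) = 0$ yields $[\frac{d}{da} f_a(x(a))]_{a^{*}} = [(\partial_a f_a)(x)]_{a^{*}}$, which is precisely the $k = 0$ term of the series in the lemma. Passing to the limit $N \to \infty$ in the formula for $q_N'(a^{*})$ and adding this $k=0$ term to both sides, I conclude
\[
\left[ \frac{d}{da} f_a(x(a)) - \frac{d}{da} p(a) \right]_{a = a^{*}} = \sum_{k=0}^{\infty} \frac{[(\partial_a f_a)(f^k(x))]_{a = a^{*}}}{(f^k)'(f(x))},
\]
which is the claimed identity.
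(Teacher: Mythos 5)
The paper does not prove Lemma~\ref{l:ptreform}; it is cited to \cite{TpTcYls1992, TpTcYls1994} and used as a black box in verifying parameter transversality, so there is no internal proof to compare against. On its own merits your argument is correct and follows the standard route for this transversality formula. The finite-itinerary approximation $q_N(a)$, the implicit differentiation of $f_a^N(q_N(a)) = f^N(p)$, and the telescoping identity all check out, and peeling off the $k=0$ term via $f'(x)=0$ correctly accounts for the $\frac{d}{da}f_a(x(a))$ contribution. The one step you sketch rather than prove is the convergence $q_N'(a^*)\to p'(a^*)$; a clean way to finish it is to observe that both $p(a)$ and $q_N(a)$ are obtained by applying the same composition of inverse branches $g_{1,a}\circ\cdots\circ g_{N,a}$ to, respectively, the hyperbolic continuation $p_N(a)$ of $f^N(p)$ and the fixed target $f^N(p)$. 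Differentiating at $a^*$ and using $p_N(a^*)=f^N(p)$ gives $p'(a^*)-q_N'(a^*)=p_N'(a^*)/(f^N)'(p)$. Condition (A) of Definition~\ref{d:mis} keeps the forward orbit of $p=f(x)$ in the uniformly expanding region, so $|p_N'(a^*)|$ is bounded uniformly in $N$, while $1/|(f^N)'(p)|\to 0$ by the summability hypothesis. That identifies the limit without needing a full graph-transform construction; your sketch points at the right mechanism, and the proof is sound.
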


Property~\bpref{li:g5} follows from Lemma~\ref{l:ptreform} for $\La$ sufficiently large.
To see this, suppose $f_{a^{*}} \in \mscr{M}$ and let $c \in C(f_{a^{*}})$.  For $k \in
\mbb{Z}^{+}$, we have 
\begin{equation*}
|(f_{a^{*}}^{k})'(f(c))| \geqs \left( K_{5} \La^{\frac{1}{4}} \right)^{k}
\end{equation*}
by Lemma~\ref{l:morse_estm}\pref{li:morse_3}.  Since $K_{6}^{-1} \leqs \pdfop{a} f(s,a)
\leqs K_{6}$, we conclude that if $\La$ is sufficiently large, then
\begin{equation*}
\sum_{k=0}^{\infty} \frac{[\pdop{a} f_{a} (f_{a^{*}}^{k} (c))]_{a =
    a^{*}}}{(f_{a^{*}}^{k})'(f_{a^{*}} (c))} \geqs K_{6}^{-1} - \sum_{k=1}^{\infty}
\frac{K_{6}}{\left( K_{5} \La^{\frac{1}{4}} \right)^{k}} > 0.
\end{equation*}

Property~\bpref{li:g7} follows from Lemma~\ref{l:morse_estm} and
Proposition~\ref{p:binding_estm} provided $\La$ is sufficiently large.

\newcommand{\bz}{{\mbi{z}}}
\newcommand{\bh}{{\mbi{h}}}
\newcommand{\bze}{{\mbi{\zeta}}}
\newcommand{\bzero}{{\mbi{0}}}
\newcommand{\bzone}{{\mbi{z}^{(1)}}}
\newcommand{\bztwo}{{\mbi{z}^{(2)}}}
\newcommand{\bdelta}{{\mbi{\delta}}}

\appendix
\section{Some proofs}\label{s:tech_proof}


We assume throughout Section~\ref{s:tech_proof} that $L=1$.  Notice that if $\mbi{V}$
denotes a vector field, then
\begin{equation}\label{e:radial_ode}
\frac{d \mbi z}{ds}=\mbi{V}
\quad\Longrightarrow \quad 
\frac{d \norm{\mbi z}}{ds}=\frac{1}{2\norm{\mbi z}}\frac{d\norm{\mbi z}^2}{ds}=\frac{\mbi{z}}{\norm{\mbi z}}\cdot\frac{d\mbi{z}}{ds} = \frac{\mbi{z}}{\norm{\mbi z}}\cdot \mbi{V}.
\end{equation}
We will use this fact together with the following Gr\"onwall-type inequality:
\begin{lemma}\label{l:gronwall}
Assume that $\beta$ is a constant, the function $\varphi$ is continuous on the interval $[\hat s,\check s]$, and that the function $u$ is differentiable and satisfies $\tdf{u}{s}\leqs \beta u+\varphi$ on $(\hat s, \check s)$. Then, for all $s\in(\hat s, \check s)$,
\beqn
u(s)\leqs u(\hat s)e^{\beta(s-\hat s)}+\int_{\hat s}^s e^{\beta(s-\tau)}\varphi(\tau)\,d\tau.
\eeqn
\end{lemma}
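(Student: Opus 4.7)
The plan is to use the standard integrating factor trick. Multiplying the differential inequality $u'(s) \leqs \beta u(s) + \varphi(s)$ through by the positive quantity $e^{-\beta s}$ and rearranging, we obtain
\begin{equation*}
\frac{d}{ds}\bigl[e^{-\beta s} u(s)\bigr] = e^{-\beta s}\bigl(u'(s) - \beta u(s)\bigr) \leqs e^{-\beta s} \varphi(s)
\end{equation*}
pointwise on $(\hat s, \check s)$. Since $u$ is differentiable on $(\hat s, \check s)$ and $\varphi$ is continuous on $[\hat s, \check s]$, the right-hand side is continuous and the left-hand side is the derivative of a function that extends continuously to $[\hat s, s]$, so we may integrate from $\hat s$ to any $s \in (\hat s, \check s)$.

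Integrating yields
\begin{equation*}
e^{-\beta s} u(s) - e^{-\beta \hat s} u(\hat s) \leqs \int_{\hat s}^{s} e^{-\beta \tau} \varphi(\tau)\, d\tau.
\end{equation*}
Multiplying through by $e^{\beta s}$ (positive) and absorbing the exponentials gives exactly
\begin{equation*}
u(s) \leqs u(\hat s) e^{\beta(s-\hat s)} + \int_{\hat s}^{s} e^{\beta(s-\tau)} \varphi(\tau)\, d\tau,
\end{equation*}
which is the claim.

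There is essentially no obstacle here: the only point requiring minor care is the justification that one may integrate the differential inequality between a pointwise statement at each $s \in (\hat s, \check s)$ and the integral statement, which follows from the assumed differentiability of $u$ and continuity of $\varphi$ via the fundamental theorem of calculus applied to $e^{-\beta s} u(s)$. If desired, one can avoid even this minor issue by setting $w(s) := u(s) - u(\hat s) e^{\beta(s-\hat s)} - \int_{\hat s}^s e^{\beta(s-\tau)} \varphi(\tau)\, d\tau$, noting $w(\hat s) = 0$ and computing $w'(s) \leqs \beta w(s)$, from which a standard argument gives $w(s) \leqs 0$.
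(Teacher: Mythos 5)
Your proof is correct, and it is essentially the same argument as the paper's, just packaged slightly differently. The paper defines $v$ to be the exact solution of $v' = \beta v + \varphi$ with $v(\hat s) = u(\hat s)$, sets $w = u - v$, observes $w' \leqs \beta w$ with $w(\hat s)=0$, and then invokes the ``standard Gr\"onwall argument'' to conclude $w \leqs 0$ --- which is precisely your closing alternative, and whose justification is in turn the integrating-factor computation you carry out in your main argument. So your direct version simply unwinds the step the paper leaves implicit.
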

\begin{proof}
Suppose $v(s)=u(\hat s)e^{\beta(s-\hat s)}+\int_{\hat s}^s e^{\beta(s-\tau)}\varphi(\tau)\,d\tau$. Then $v$ satisfies the equation $\tdf{v}{s}(s)=\beta v(s)+\varphi(s)$ with $v(\hat s)=u(\hat s)$. Since $u-v$ is differentiable, $\tdf{}{s}(u-v)\leqs \beta (u-v)$, and $(u-v)(\hat s)=0$, a standard Gr\"onwall argument shows that $u\leqs v$.
\end{proof}
We get immediately
\begin{corollary}\label{c:gronwall}
Suppose that in Lemma~\ref{l:gronwall} $\tdf{u}{s}(s)\leqs \frac{\lambda_1}{2} u+C_0e^{\lambda_1(s-\hat s)}$. Then
\begin{equation*}
u(s)\leqs \left(u(\hat s)+\frac{2C_0}{|\lambda_1|}\right)e^{\frac{\lambda_1}{2}(s-\hat s)}.
\end{equation*}
\end{corollary}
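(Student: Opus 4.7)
The statement is an immediate specialization of Lemma~\ref{l:gronwall}, so my plan is simply to apply that lemma with $\beta=\lambda_1/2$ and $\varphi(\tau)=C_0\,e^{\lambda_1(\tau-\hat s)}$, then massage the resulting integral. Keep in mind that $\lambda_1<0$, so $|\lambda_1|=-\lambda_1$ and all exponentials appearing are decaying in $s-\hat s$.

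First I would write down what Lemma~\ref{l:gronwall} gives directly:
\begin{equation*}
u(s)\leqs u(\hat s)\,e^{\lambda_1(s-\hat s)/2}+C_0\int_{\hat s}^{s} e^{\lambda_1(s-\tau)/2}\,e^{\lambda_1(\tau-\hat s)}\,d\tau.
\end{equation*}
Combining the exponentials inside the integrand collapses it to $e^{\lambda_1 s/2-\lambda_1\hat s}\,e^{\lambda_1\tau/2}$, and then an elementary integration in $\tau$ yields
\begin{equation*}
\int_{\hat s}^{s} e^{\lambda_1(s-\tau)/2}\,e^{\lambda_1(\tau-\hat s)}\,d\tau
=\frac{2}{\lambda_1}\bigl(e^{\lambda_1(s-\hat s)}-e^{\lambda_1(s-\hat s)/2}\bigr)
=\frac{2}{|\lambda_1|}\bigl(e^{\lambda_1(s-\hat s)/2}-e^{\lambda_1(s-\hat s)}\bigr).
\end{equation*}

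Next, since $\lambda_1<0$ and $s\geqs\hat s$, the second summand in the last parenthesis is nonnegative, so dropping it gives the bound
\begin{equation*}
\int_{\hat s}^{s} e^{\lambda_1(s-\tau)/2}\,e^{\lambda_1(\tau-\hat s)}\,d\tau
\leqs \frac{2}{|\lambda_1|}\,e^{\lambda_1(s-\hat s)/2}.
\end{equation*}
Substituting this into the Gr\"onwall inequality and factoring out the common exponential produces
\begin{equation*}
u(s)\leqs \left(u(\hat s)+\frac{2C_0}{|\lambda_1|}\right)e^{\lambda_1(s-\hat s)/2},
\end{equation*}
which is exactly the claim. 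The whole argument is a two-line calculation, so there is really no obstacle; the only thing to be careful about is the sign of $\lambda_1$ when discarding the subdominant exponential $e^{\lambda_1(s-\hat s)}$ in favor of $e^{\lambda_1(s-\hat s)/2}$.
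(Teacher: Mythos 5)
Your proof is correct and is precisely the computation the paper suppresses when it says the corollary follows ``immediately'' from Lemma~\ref{l:gronwall}: apply the lemma with $\beta=\lambda_1/2$, evaluate the integral to get $\frac{2}{|\lambda_1|}\bigl(e^{\lambda_1(s-\hat s)/2}-e^{\lambda_1(s-\hat s)}\bigr)$, and discard the subdominant term using $\lambda_1<0$.
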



Our first application of a Gr\"onwall inequality is
\begin{lemma}\label{le:z_kick_der}
  Assume $\bz$ solves the forced equation \eqref{e:nf_full_z} with $\bz(s_0)=\bz_0$ and
  fix a constant $K>0$. If $\ve/|\lambda_1|$ is sufficiently small,
\begin{equation}\label{e:z_kick_der}
\norm{\partial^m_{s}\partial^l_{s_0}\bz(s)}\leqs C \ve \qquad (0\leqs l+m\leqs 3)
\end{equation}
as long as $s-s_0\leqs K$. Moreover, 
\begin{equation}\label{e:z_kick_der_z0}
\norm{\partial_{\bz_0} \bz-\msf{1}} \leqs C|\lambda_1|.
\end{equation}
\end{lemma}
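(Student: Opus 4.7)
The idea is straightforward: treat \eqref{e:nf_full_z} as a small perturbation of the linear decay $\dot{\mathbf{z}} = \mathsf{A}\mathbf{z}$ on the bounded time window $s-s_0 \leq K$, and close every estimate with one of the Gronwall inequalities already set up (Lemma~\ref{l:gronwall}, Corollary~\ref{c:gronwall}). The key scaling facts are that $\|\mathsf{A}\| = \mathcal{O}(|\lambda_1|)$, since the ratios $\mu_i = \lambda_1/\lambda_i$ are fixed with $|\mu_i|\leq 1$, and that the inhomogeneous forcing in \eqref{e:nf_full_z} has size $\varepsilon$; I will take the natural initial condition $\|\mathbf{z}_0\| = \mathcal{O}(\varepsilon)$, since in context the kick acts on the tubular neighborhood of radius $\leq \kappa_0\varepsilon$.

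For the base case $l = m = 0$, \eqref{e:radial_ode} combined with $\mathbf{z}\cdot\mathsf{A}\mathbf{z} \leq \lambda_1\|\mathbf{z}\|^2$ yields a differential inequality of the form $\tfrac{d\|\mathbf{z}\|}{ds} \leq \lambda_1\|\mathbf{z}\| + C\varepsilon + C\varepsilon\|\mathbf{z}\| + C\|\mathbf{z}\|^2$, so a bootstrap (absorbing $\varepsilon\|\mathbf{z}\|$ and $\|\mathbf{z}\|^2$ into the linear part once one knows $\|\mathbf{z}\| = \mathcal{O}(\varepsilon)$) together with Corollary~\ref{c:gronwall} produces $\|\mathbf{z}(s)\| \leq C\varepsilon$ on $[s_0, s_0+K]$. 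Higher $s$-derivatives follow from repeatedly differentiating \eqref{e:nf_full_z}: on the trajectory the right-hand side and all its $C^3$ derivatives in $(s,\mathbf{z})$ are of order $\varepsilon$, so $\|\partial_s^m\mathbf{z}\| = \mathcal{O}(\varepsilon)$ for $m \leq 3$ by induction. For $s_0$-derivatives, I differentiate the defining identity $\mathbf{z}(s_0;s_0) = \mathbf{z}_0$ in $s_0$ to obtain $\partial_{s_0}\mathbf{z}(s_0) = -\partial_s\mathbf{z}(s_0) = \mathcal{O}(\varepsilon)$; then $\partial_{s_0}\mathbf{z}(s)$ solves a linear variational equation whose coefficient matrix is of size $\mathcal{O}(|\lambda_1| + \varepsilon)$, and Lemma~\ref{l:gronwall} closes the estimate. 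Mixed derivatives $\partial_s^m\partial_{s_0}^l\mathbf{z}$ with $l + m \leq 3$ follow by iterating this procedure.

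For \eqref{e:z_kick_der_z0}, the Jacobian $\mathsf{J}(s) = \partial_{\mathbf{z}_0}\mathbf{z}(s)$ satisfies the variational equation $\partial_s\mathsf{J} = (\mathsf{A} + \mathcal{O}(\varepsilon) + \mathcal{O}(\|\mathbf{z}\|))\mathsf{J}$ with $\mathsf{J}(s_0) = \mathsf{1}$. Writing $\mathsf{J} = \mathsf{1} + \mathsf{R}$, using $\|\mathsf{A}\| = \mathcal{O}(|\lambda_1|)$, $\|\mathbf{z}\| = \mathcal{O}(\varepsilon)$, and $\varepsilon = o(|\lambda_1|)$ by hypothesis, Gronwall on the length-$K$ interval yields $\|\mathsf{R}(s)\| = \|\mathsf{J}(s) - \mathsf{1}\| = \mathcal{O}(|\lambda_1|) + \mathcal{O}(\varepsilon) = \mathcal{O}(|\lambda_1|)$. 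The main obstacle I anticipate is purely bookkeeping: organizing the induction over the nine pairs $(m,l)$ with $l+m \leq 3$ so that each step uses only the $C^3$ regularity of the vector field and so that no spurious factors of $|\lambda_1|^{-1}$ creep in from integrating the matrix exponential against $\mathsf{A}$.
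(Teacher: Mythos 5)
Your proposal is essentially the paper's own argument: write \eqref{e:nf_full_z} in normal form as $\msf{A}\bz$ plus an $\mcal{O}(\ve)$ forcing, exploit the tubular-neighborhood radius $\mcal{O}(\ve)$ and the fixed eigenvalue ratios so that $\|\msf{A}\| = \mcal{O}(|\lambda_1|)$, close the zeroth-order bound with \eqref{e:radial_ode} plus a Gr\"onwall bootstrap, propagate to $s$- and $s_0$-derivatives via the variational equations, and read off $\|\partial_{\bz_0}\bz - \msf{1}\| = \mcal{O}(|\lambda_1|)$ from the variational equation for the Jacobian together with $\ve = o(|\lambda_1|)$. The only cosmetic differences are that the paper derives the initial conditions $\partial_{s_0}^l\bz(s_0)$ from the integral representation rather than by differentiating $\bz(s_0;s_0) = \bz_0$, and that it tracks the decaying factor $e^{\lambda_1(s-s_0)/2}$ explicitly (used again in later propositions) rather than a flat constant on the length-$K$ window; neither changes the substance.
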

\begin{proof}
Equation~\eqref{e:nf_full_z} reads
\begin{equation}\label{e:nf_full_z_v2}
  \frac{d\bz}{ds}=\msf{A}\bz+\bh_3(s,\bz) \qquad\text{with}\qquad
  \bh_3(s,\bz)=\mcal{O}_{s} (\ve) 
  +\mcal{O}_{s,\mbi{z}} (\ve \mbi{z}) + \mcal{O}_{s,\mbi{z}} (\ve^{2}) + \mcal{O}_{s,\mbi{z}} (\norm{\mbi{z}}^{2}).
\end{equation}
Assuming $\norm{\bz}/|\lambda_1|$ and $\ve/|\lambda_1|$ are sufficiently small, \eqref{e:radial_ode} implies
\begin{equation*}
\frac{d\norm{\bz}}{ds}\leqs \frac{\lambda_1}{2}\norm{\bz}+C_0\ve.
\end{equation*}
By Lemma~\ref{l:gronwall}, 
\begin{equation*}
\norm{\bz(s)}\leqs \norm{\bz_0}e^{\frac{\lambda_1}{2}(s-s_0)}+ \frac{2C_0\ve}{|\lambda_1|}\left(1-e^{\frac{\lambda_1}{2}(s-s_0)}\right) \leqs \norm{\bz_0}e^{\frac{\lambda_1}{2}(s-s_0)}+ C_0\ve(s-s_0).
\end{equation*}
For $s-s_0\leqs K$ we get $\norm{\bz(s)}/|\lambda_1|\leqs \norm{\bz_0}/|\lambda_1|+C_0 K\ve/|\lambda_1|$, which proves the assumption legitimate. 

Differentiating \eqref{e:nf_full_z_v2} with respect to $s$ up to two times yields an expression for $\partial^m_{s}\bz(s)$. One immediately obtains
\begin{equation*}
\norm{\partial^m_{s}\bz(s)}\leqs C \ve
\end{equation*}
for $0\leqs m\leqs 3$ and $s-s_0\leqs K$. 

Equation~\eqref{e:nf_full_z_v2} implies
\begin{equation}\label{e:int_repr}
\mbi{z} (s) = e^{(s-s_0
    ) \msf{A}}\bz_0+ \int_{s_0}^{s} e^{(s-\ta
    ) \msf{A}} \mbi{h}_{3} (\ta, \mbi{z} (\ta)) \, d \ta.
\end{equation}
Differentiating this with respect to $s_0$ up to three times and evaluating at $s=s_0$ yields
\begin{align*}
\partial_{s_0}\bz(s_0) & =-\msf{A}\bz_0-\bh_3(s_0,\bz_0) \\
\partial^2_{s_0}\bz(s_0) & =  \msf{A}^2\bz_0+\msf{A}\bh_3(s_0, \bz_0) - \partial_s\bh_3(s_0, \bz_0) - D\bh_3(s_0,\bz_0) \partial_{s_0} \bz(s_0)  \\
\partial^3_{s_0}\bz(s_0) & = -\msf{A}^3\bz_0 - \msf{A}^2\bh_3(s_0, \bz_0) + 2\msf{A} \partial_s\bh_3(s_0, \bz_0) - \partial_s^2\bh_3(s_0, \bz_0) + \msf{A} D\bh_3(s_0,\bz_0) \partial_{s_0} \bz(s_0) \\
& \qquad - D(\partial_s\bh_3)(s_0,\bz_0) \partial_{s_0} \bz(s_0)  - D\bh_3(s_0,\bz_0) \tdf{}{s_0}\partial_{s_0}\bz(s_0)   - D^2\bh_3(s_0,\bz_0) (\partial_{s_0} \bz(s_0),\partial_{s_0} \bz(s_0)) \notag \\
& \qquad - D\bh_3(s_0,\bz_0)  \partial^2_{s_0} \bz(s_0). \notag
\end{align*}
Clearly, for $1\leqs l\leqs 3$,
\begin{equation*}
\norm{\partial^l_{s_0}\bz(s_0)}\leqs C \ve.
\end{equation*}
Such initial conditions are needed for analyzing the variational equations
\begin{align} 	\label{e:z_der_s_s0}
\frac{d}{ds}\partial_{s_0}\bz & = \left(\msf{A}+D\bh_3(s,\bz)\right)\partial_{s_0}\bz
\\
\frac{d}{ds}\partial^2_{s_0}\bz & = \left(\msf{A}+D\bh_3(s,\bz)\right)\partial^2_{s_0}\bz + D^2\bh_3(s,\bz)(\partial_{s_0}\bz,\partial_{s_0}\bz)	\label{e:z_der_s_s0s0}
\\
\frac{d}{ds}\partial^3_{s_0}\bz & = \left(\msf{A}+D\bh_3(s,\bz)\right)\partial^3_{s_0}\bz + 3D^2\bh_3(s,\bz)(\partial_{s_0}^2\bz,\partial_{s_0}\bz) + D^3\bh_3(s,\bz)(\partial_{s_0}\bz,\partial_{s_0}\bz,\partial_{s_0}\bz).\notag
\end{align}
One then checks recursively, using \eqref{e:radial_ode} and Corollary~\ref{c:gronwall}, that
\begin{equation*}
\norm{\partial^l_{s_0}\bz(s)}\leqs C \ve e^{\frac{\lambda_1}{2}(s-s_0)} \leqs C \ve
\end{equation*}
hold for $1\leqs l\leqs 3$ and $s-s_0\leqs K$.

Equations \eqref{e:z_der_s_s0} and \eqref{e:z_der_s_s0s0} provide us with an expression for $\partial_{s}\partial^l_{s_0}\bz(s)$ with $l=1$ and $l=2$. Moreover, \eqref{e:z_der_s_s0} can be differentiated with respect to $s$ to yield an expression for $\partial_{s}^2\partial_{s_0}\bz(s)$. The bounds in \eqref{e:z_kick_der} are then readily obtained.

Finally, we will prove \eqref{e:z_kick_der_z0}. To this end, notice that
\beq\label{e:bz_der_bz0_ode}
\frac{d}{ds}\partial_{\bz_0}\bz = \left(\msf{A} + D\bh_3(s,\bz)\right)\partial_{\bz_0}\bz.
\eeq
In particular, each row, $\partial_{\bz_{0,i}}\bz$, of $\partial_{\bz_0}\bz$ satisfies this equation. Hence, by principle \eqref{e:radial_ode},
$
\frac{d}{ds}\norm{\partial_{\bz_{0,i}}\bz} \leqs \frac{\lambda_1}{2} \norm{\partial_{\bz_{0,i}}\bz},
$
so that the matrix $\partial_{\bz_0}\bz$ remains perpetually bounded. Integrating both sides of \eqref{e:bz_der_bz0_ode} from $s_0$ to $s$ and recalling $\partial_{\bz_0}\bz (s_0)=\msf{1}$ gives
\beqn
\norm{\partial_{\bz_0}\bz(s)-\msf{1}}\leqs |s-s_0| \left(|\lambda_{n-1}|+\sup_{s_0\leqs
    s'\leqs s}\norm{D\bh_3(s',\bz)}
\sup_{s_0\leqs s'\leqs s}\norm{\partial_{\bz_0}\bz(s')} \right),
\eeqn
where $\norm{\,\cdot\,}$ now denotes the matrix norm induced by the Euclidean norm. This estimate implies \eqref{e:z_kick_der_z0}.
\end{proof}


\begin{proof}[Proof of Proposition \ref{p:f_per_dif}]
Throughout the proof, $\norm{\,\cdot\,}_{C^3}$ will stand for the $C^3$-norm with respect to $s_0$.

By \eqref{e:nf_full_t} and \eqref{e:int_nf_l}, $\tilde s$ and $\hat s$ have to satisfy
\begin{equation}\label{e:s_rho}
 \int_{s_{0}}^{\tilde{s}} b_{0} (\ta) \, d \ta = \rho =  \int_{s_{0}}^{\hat{s}} b_{0}(\tau) + v(\tau)\, d \ta,
\end{equation}
where $v(s)=\mbi{b}_{1} ^{\msf{T}}(s) \msf{P} (s) \mbi{z}(s) + \mcal{O}_{s,\mbi{z}} (\ve)
+ \mcal{O}_{s,\mbi{z}} 
(\ve \norm{\mbi{z}(s)}) + \mcal{O}_{s,\mbi{z}}(\norm{\mbi{z}(s)}^{2})$ and
$\mbi{z}=\mbi{z}(s)$ 
solves \eqref{e:nf_full_z} with $\mbi{z}(s_0)=\mbi{z}_0$. 

We use the implicit function theorem to find $\tilde s$. Clearly, $F:\mbb{R}\times \mbb{R}\to \mbb{R}:(s_0,s)\mapsto  \int_{s_{0}}^{s} b_{0} (\ta) \, d \ta - \rho $ is $C^3$. Observe that $F(s_0,s_0)=-\rho$ and $\lim_{s\to\infty} F(s_0,s)=\infty$ as $\min b_0=m>0$. By the intermediate value theorem, there exists a number $\tilde s$ such that $F(s_0,\tilde s)=0$. Because $\frac{\partial}{\partial s}F(s_0,s)=b_0(s)\geqs m$, the implicit function theorem implies that $\tilde s$ is a $C^3$-function of $s_0$. Notice that $F(s_0+1,s+1)\equiv F(s_0,s)$, so that $\tilde s(s_0+1)=\tilde s(s_0)+1$ which implies that $s_0\mapsto \tilde s(s_0)-s_0$ is periodic. 

Now that we have $\tilde s$, let us define the function
\begin{equation*}
g(\xi) \defas -\rho+\int_{s_{0}}^{\tilde s+\xi} b_{0} (\ta) + v(\tau)\, d \ta.
\end{equation*}
Notice that, denoting $\xi_1=\hat s-\tilde s$, the right side of \eqref{e:s_rho} is equivalent to $g(\xi_1)=0$. The Taylor expansion $g(\xi)=g(0)+g'(0)\xi+\delta_2g(\xi)$ yields
\begin{equation*}
G(\xi)\defas-\frac{1}{g'(0)}\left(g(0)+\delta_2g(\xi)\right)=\xi,
\end{equation*}
which we regard, for all fixed $\bz_0$, as a fixed point equation on the space of $C^3$ functions $\xi=\xi(s_0)$. Assuming $G$ is a contraction in a closed, origin-centered, ball $\bar B_r\subset C^3$ of radius $r$, there exists a unique solution, $\xi_1$, to $G(\xi)=\xi$ inside the ball. Next, we prove that for a suitably small value of $r$, $G$ is indeed a contraction.

First, notice that 
\begin{align*}
g(0) & = \int_{s_{0}}^{\tilde s} v(\tau)\, d \ta = (\tilde s-s_0) \int_0^1 v((1-\tau)s_0+\tau \tilde s)\, d \ta\\
g'(0) & = b_0(\tilde s)+v(\tilde s) \\
\delta_2 g(\xi) & = \xi^2 \int_0^1 (1-\tau)\,g''(\xi\tau)\,d\tau =  \xi^2 \int_0^1 (1-\tau)\,(b_0'+v')(\tilde s+\xi\tau)\,d\tau
\end{align*}
are smooth functions of $s_0$. Because $\tilde s$ is $C^3$ in $s_0$ and $\inf_{s_0}{g'(0)}>0$, the bounds \eqref{e:z_kick_der} yield 
\begin{equation*}
\left\|\frac{1}{g'(0)}\right\|_{C^3}\leqs C
\quad \text{and}\quad
\left\|\frac{g(0)}{g'(0)}\right\|_{C^3}\leqs C \ve.
\end{equation*}
Moreover,
\begin{equation*}
\norm{\delta_2g(\xi)}_{C^3} \leqs C\norm{\xi}_{C^3}^2 \sup_{\zeta\in \bar B_r}\norm{(b_0'+v')(\tilde s+\zeta)}_{C^3}.
\end{equation*}
Hence, $\norm{G(\xi)}_{C^3}\leqs C_0 (\ve + r^2)$ for some $C_0$. Choosing $r=2C_0 \ve$,
we have 
$G(\bar B_r)\subset \bar B_r$ for $\ve$ small enough. 

Second, let $\xi^1$ and $\xi^2$ be elements of $\bar B_r$. Since the map $\xi\mapsto G(\xi)$ is differentiable and the operator norm of the derivative obeys the bound $ \sup_{\xi\in\bar B_r}\norm{DG(\xi)}_{\mathscr{L}(C^3)}\leqs C \sup_{\xi\in\bar B_r}\norm{D\delta_2g(\xi)}_{\mathscr{L}(C^3)}\leqs Cr$, the mean value theorem yields $\norm{G(\xi^1)-G(\xi^2)}_{C^3}\leqs 
Cr \norm{\xi^1-\xi^2}_{C^3}$. Hence, $G$ is a contraction on $\bar B_r$ if $\ve$ is
sufficiently small.

We will now prove that the fixed point, $\xi_1$, of $G$ is a periodic function of $s_0$. Let us denote $\bz(s,s_0,\bz_0)$ the solution and $v(s)|_{s_0}$ the function $v$ defined above, when the initial condition $\bz(s_0)=\bz_0$ is being used. Because $\bh_3(s+2,\bz)=\bh_3(s,\bz)$ in \eqref{e:nf_full_z_v2}, we have $\bz(s+2,s_0+2,\bz_0)= \bz(s,s_0,\bz_0)$ and $v(s+2)|_{s_0+2}=v(s)|_{s_0}$.
Since $g(\xi_1)=0$ for all values of $s_0$ and $\tilde s(s_0+2)=\tilde s(s_0)+2$, the computation
\begin{align*}
g(\xi_1)(s_0+2) &= -\rho+\int_{s_0+2}^{\tilde s(s_0+2)+\xi_1(s_0+2)} b_{0} (\ta) + v(\tau)|_{s_0+2}\, d \ta =  -\rho+\int_{s_0+2}^{\tilde s(s_0)+2+\xi_1(s_0+2)} b_{0} (\ta) + v(\tau)|_{s_0+2}\, d \ta \\
&= -\rho+\int_{s_0}^{\tilde s(s_0)+\xi_1(s_0+2)} b_{0} (\ta+2) + v(\tau+2)|_{s_0+2}\, d \ta = -\rho+\int_{s_0}^{\tilde s(s_0)+\xi_1(s_0+2)} b_{0} (\ta) + v(\tau)|_{s_0}\, d \ta \\
& = g(\xi_1)(s_0)+ \int_{\tilde s(s_0)+\xi_1(s_0)}^{\tilde s(s_0)+\xi_1(s_0+2)} b_{0} (\ta) + v(\tau)|_{s_0}\, d \ta, 
\end{align*}
implies that the last integral vanishes despite the fact that the integrand is positive,
so we must have $\xi_1(s_0+2)=\xi_1(s_0)$.

As the last step, we will bound the difference $\hat \bz-\tilde \bz$. Let $\bzone$ and $\bztwo$ solve \eqref{e:nf_full_z} and \eqref{e:nf_full_z_l}, respectively, with the initial condition $\bzone(s_0)=\bztwo(s_0)=\bz_0$. Both of these are $C^3$ functions of $(s_0,\bz_0)$ by the smoothness of the vector fields. By definition, $\hat \bz = \bzone(\hat s)$ and $\tilde \bz = \bztwo(\tilde s)$. We need a bound on the $C^3$ norm of the difference $\bsym{\xi}_2 (s_0)=\hat \bz - \tilde \bz$ for fixed $\bz_0$. Notice that $\bsym{\xi}_2(s_0)=(\bzone-\bztwo)(\hat s)+(\bztwo(\hat s)-\bztwo(\tilde s))$.

Observe that the difference $\bdelta=\bzone-\bztwo$ satisfies the differential equation
\begin{equation*}
\frac{d\bdelta}{ds} = \msf{A} \bzone + \mcal{O}_{s, \bzone}
(\ve \bzone) + \mcal{O}_{s,\bzone} (\ve^{2}) + \mcal{O}_{s, \bzone} (\norm{\bzone}^{2}) = \mbi{w}.
\end{equation*}
Here $\bzone$, and hence $\mbi{w}$, is to be regarded as a predetermined function for
which we already have good bounds. Indeed, let $\mathscr{S}=\{ (s_{0}, s) : 0 \leqs s_{0}
< 2, \; s_{0} \leqs s \leqs K \}$ and $\norm{\,\cdot\,}_{C^3_\mathscr{S}}$ stand for the $C^3$ norm on this
set. According to \eqref{e:z_kick_der}, $\norm{\mbi{w}-\msf{A}
  \bzone}_{C^3_\mathscr{S}}\leqs C \ve^{2}$ whereas, recalling that all
eigenvalues of $\msf{A}$ are proportional to $\lambda_1$, $\norm{\msf{A}
  \bzone}_{C^3_\mathscr{S}}\leqs C|\lambda_1| \ve$. In other words,
$\norm{\mbi{w}}_{C^3_\mathscr{S}}\leqs C|\lambda_1| \ve$.  As
$\bdelta(s_0)=0$, we have
\begin{equation*}
(\bzone-\bztwo)(\hat s)=\bdelta(\hat s)=\int_{s_0}^{\hat s} \mbi{w}(\tau)\,d\tau.
\end{equation*}
Because $\hat s$ is $C^3$ in $s_0$, it follows that $\norm{(\bzone-\bztwo)(\hat
  s)}_{C^3}\leqs C|\lambda_1| \ve$. By \eqref{e:nf_full_z_l}, the remaining contribution reads
\begin{equation*}
\bztwo(\hat s)-\bztwo(\tilde s) = \int_{\tilde s}^{\hat s} \frac{\ve \msf{P}^{-1} (\tau) \bsym{\phi} (\mbi{0}, \tau)}{\subpsup{\psi}{n}{0} (\tau)} \,d\tau.
\end{equation*}
We have seen above that $\norm{\hat s-\tilde s}_{C^3}\leqs C \ve$, which implies 
$\norm{\bztwo(\hat s)-\bztwo(\tilde s)}_{C^3}\leqs C \ve^{2}$ and finally 
$\norm{\bsym{\xi}_2(s_0)}_{C^3}\leqs C|\lambda_1| \ve$. 
\end{proof}


\begin{remark}
It follows from the previous proof that, under the conditions of Proposition~\ref{p:f_per_dif},
\begin{equation}\label{e:hat_s_der_z0}
\norm{\partial_{\bz_0}\hat s}\leqs C.
\end{equation}
Indeed, $\partial_{\bz_0}\hat s = \partial_{\bz_0}\xi_1$, as $\partial_{\bz_0}\tilde s=0$. From the fixed point equation $\xi_1=G(\xi_1)$ we get $\partial_{\bz_0}\xi_1=(\msf{1}-DG(\xi_1))^{-1}(\partial_{\bz_0}G)(\xi_1)$ and then the claimed bound. Moreover,
\begin{equation}\label{e:hat_z_der_z0}
\norm{\partial_{\bz_0}\hat \bz-\msf{1}}\leqs C|\lambda_1|.
\end{equation}
For let $\bz(s)=\bz(s;s_0,\bz_0)$ be the solution to  \eqref{e:nf_full_z} with $\bz(s_0;s_0,\bz_0)=\bz_0$ and recall that $\hat s$ depends on $(s_0,\bz_0)$. By definition, $\hat \bz = \bz(\hat s;s_0,\bz_0)$ so that $\partial_{\bz_0}\hat \bz = \partial_{s}\bz(\hat s)\partial_{\bz_0}\hat s+\partial_{\bz_0}\bz(\hat s)=\msf{1}+\mcal{O}(\lambda_1)$ by the bounds in Lemma~\ref{le:z_kick_der}.

\end{remark}


Let us view the solution
\begin{equation}\label{e:3variables}
\bz(s)=\bz(s,\hat s,\hat\bz),\quad \quad\bz(\hat s)\equiv \hat \bz
\end{equation}
to equation \eqref{e:nf_relax_z} as a function of three variables and abbreviate
$\partial_s=\partial/\partial_s$, $\hat\partial_{i}=\partial/\partial {\hat z}_{i}$,
$\hat\partial_{i_1 \cdots i_k}=\hat\partial_{i_1}\cdots\hat\partial_{i_k}$, and
$\partial_{\hat s}=\partial/\partial \hat s$.
\begin{proposition}\label{p:derivative_bounds}
Assuming $\norm{\hat \bz}/|\lambda_1|$ is small enough, we have, for $0\leqs k+l+m\leqs 3$ and $s\geqs \hat s$, the following bounds:
\begin{align*}
\left\|\partial_s^m\partial_{\hat s}^l\bz(s)\right\| & \leqs C\norm{\hat \bz} e^{\frac{\lambda_1}{2}(s-\hat s)}\\
\left\|\partial_s^m\partial_{\hat s}^l\hat\partial_{i_1 \cdots i_k}\bz(s)\right\| & \leqs \frac{C}{|\lambda_1|^{k-1}} e^{\frac{\lambda_1}{2}(s-\hat s)} \qquad (k>0).
\end{align*}
\end{proposition}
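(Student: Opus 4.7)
The plan is to set up variational equations for the derivatives of $\bz(s,\hat s,\hat\bz)$ and chain them together using the same radial-ODE-plus-Gr\"onwall mechanism that was used in Lemma~\ref{le:z_kick_der}. Writing \eqref{e:nf_relax_z} as $\tdf{\bz}{s} = \msf{A}\bz + \bh_1(s,\bz)$ with $\bh_1 = \mcal{O}_{s,\bz}(\|\bz\|^2)$, we have $D\bh_1 = \mcal{O}(\|\bz\|)$ while $D^j\bh_1$ is bounded for $j\geqs 2$. The key observation is that each application of Corollary~\ref{c:gronwall} (to an inequality of the form $\tdf{u}{s}\leqs \frac{\lambda_1}{2}u+C_0 e^{\lambda_1(s-\hat s)}$) costs exactly one factor of $|\lambda_1|^{-1}$, which will account for the $|\lambda_1|^{-(k-1)}$ penalty. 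The base norm bound $\|\bz(s)\|\leqs \|\hat\bz\| e^{\frac{\lambda_1}{2}(s-\hat s)}$ follows from \eqref{e:radial_ode} and Lemma~\ref{l:gronwall} applied to $\tdf{\|\bz\|}{s}\leqs \frac{\lambda_1}{2}\|\bz\|$, self-consistent under the smallness hypothesis on $\|\hat\bz\|/|\lambda_1|$. Pure $\partial_s$-derivatives are not obtained from new ODEs: since $\partial_s\bz = \msf{A}\bz + \bh_1$, each $\partial_s^m$ is expressible algebraically via the equation itself in terms of $\partial_{\hat s}$ and $\hat\partial$-derivatives of the same or lower total order, so the bounds transfer automatically from the other two families.

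Pure $\hat\bz$-derivatives $\hat\partial_{i_1\cdots i_k}\bz$ are then treated by induction on $k$. The case $k=1$ uses the homogeneous variational equation $\frac{d}{ds}\hat\partial_i\bz = (\msf{A}+D\bh_1)\hat\partial_i\bz$ with initial value $\mbi{e}_i$, whence \eqref{e:radial_ode} and Lemma~\ref{l:gronwall} yield $\|\hat\partial_i\bz(s)\|\leqs Ce^{\frac{\lambda_1}{2}(s-\hat s)}$. For $k\geqs 2$, differentiating the ODE $k$ times in $\hat\bz$ produces
\begin{equation*}
\frac{d}{ds}\hat\partial_{i_1\cdots i_k}\bz = (\msf{A}+D\bh_1)\,\hat\partial_{i_1\cdots i_k}\bz + \mbi{R}_k,
\end{equation*}
where $\mbi{R}_k$ is a sum of tensor contractions $D^j\bh_1(\hat\partial^{\alpha_1}\bz,\ldots,\hat\partial^{\alpha_j}\bz)$ with $j\geqs 2$, $|\alpha_1|+\cdots+|\alpha_j|=k$, and each $|\alpha_i|<k$. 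The worst term pairs $\hat\partial^{k-1}\bz$ with $\hat\partial\bz$ through $D^2\bh_1$, giving $\|\mbi{R}_k\|\leqs C|\lambda_1|^{-(k-2)}e^{\lambda_1(s-\hat s)}$ by the inductive hypothesis. Because $\bz(\hat s,\hat s,\hat\bz)=\hat\bz$ is linear in $\hat\bz$, the initial value $\hat\partial_{i_1\cdots i_k}\bz(\hat s)$ vanishes for $k\geqs 2$, so \eqref{e:radial_ode} together with Corollary~\ref{c:gronwall} (with $C_0=C|\lambda_1|^{-(k-2)}$) yields the claimed $C|\lambda_1|^{-(k-1)}e^{\frac{\lambda_1}{2}(s-\hat s)}$ bound, closing the induction.

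The $\hat s$-derivatives and the mixed case are handled similarly. Differentiating the identity $\bz(\hat s,\hat s,\hat\bz)=\hat\bz$ in $\hat s$ gives $\partial_{\hat s}\bz(\hat s) = -\partial_s\bz(\hat s) = -\msf{A}\hat\bz - \bh_1(\hat s,\hat\bz) = \mcal{O}(\|\hat\bz\|)$, which supplies the $\|\hat\bz\|$ prefactor in the $k=0$ bound; iterating this procedure shows that $\partial_{\hat s}^l\hat\partial_{i_1\cdots i_k}\bz(\hat s)$ is $\mcal{O}(\|\hat\bz\|)$ for $k=0$ and $\mcal{O}(1)$ (or smaller) for $k\geqs 1$, with the smallness of $\|\hat\bz\|/|\lambda_1|$ absorbing any harmless higher-order contributions. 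The variational equation for $\partial_{\hat s}^l\hat\partial_{i_1\cdots i_k}\bz$ retains the same linear part $\msf{A}+D\bh_1$ with an inhomogeneity built from strictly lower-order derivatives already bounded by induction, and Corollary~\ref{c:gronwall} closes each case with the correct exponent of $|\lambda_1|$. The principal obstacle is the bookkeeping: one must verify that the worst inhomogeneity at stage $k$ is precisely of size $|\lambda_1|^{-(k-2)}e^{\lambda_1(s-\hat s)}$, so that one Gr\"onwall application promotes it by exactly one power to $|\lambda_1|^{-(k-1)}$ and the induction is sharp; none of the subdominant pairings spoils this budget because each additional $\hat\bz$-derivative on a single factor also brings an extra factor $e^{\frac{\lambda_1}{2}(s-\hat s)}$ of decay that compensates its $|\lambda_1|^{-1}$ cost.
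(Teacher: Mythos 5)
Your proposal follows essentially the same route as the paper: the radial-ODE trick \eqref{e:radial_ode} combined with Lemma~\ref{l:gronwall}/Corollary~\ref{c:gronwall}, applied to the chain of variational equations in $\hat\bz$ and $\hat s$, with initial conditions at $s=\hat s$ read off from the implicit formula and from the identity $\bz(\hat s,\hat s,\hat\bz)=\hat\bz$. The paper proceeds by enumerating the cases $k=1,2,3$ and $l=0,1,2,3$ explicitly rather than wrapping them in an induction, but the substance — one Gr\"onwall application per order of $\hat\bz$-derivative costing one power of $|\lambda_1|^{-1}$, with $\hat s$-derivatives adding only a constant prefactor, and $\partial_s^m$ obtained for free from the equation — is the same. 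Two small points of phrasing: (i) your justification that "each additional $\hat\bz$-derivative on a single factor brings an extra $e^{\lambda_1(s-\hat s)/2}$ compensating its $|\lambda_1|^{-1}$ cost" is loose; the clean accounting is that a term $D^{j}\bh_1(\hat\partial^{\alpha_1}\bz,\ldots,\hat\partial^{\alpha_j}\bz)$ with $\sum\alpha_i=k$, $\alpha_i\geqs 1$, $j\geqs 2$, is bounded by $|\lambda_1|^{-(k-j)}e^{j\lambda_1(s-\hat s)/2}\leqs C|\lambda_1|^{-(k-2)}e^{\lambda_1(s-\hat s)}$, so the $j=2$ pairing dominates and Corollary~\ref{c:gronwall} promotes it by exactly one power, closing the induction; (ii) the claim that $\partial_s^m$-derivatives are "expressible in terms of $\partial_{\hat s}$ and $\hat\partial$-derivatives of the same or lower total order" is a slight misstatement — they are read directly off the right-hand sides of the relevant variational equations, without invoking any $\partial_{\hat s}$ derivatives — but the conclusion you draw from it is correct. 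Overall the argument is sound and matches the paper's.
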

\begin{proof}
The initial conditions $(\partial \bz/\partial{\hat \bz})(\hat s)=\msf{1}$, $\hat\partial_{ij}\mbi{z}(\hat s)=\mbi{0}$, and $\hat\partial_{ijk}\mbi{z}(\hat s)=\mbi{0}$ follow from \eqref{e:3variables}, as the $\hat\bz$-derivatives can be computed \emph{after} evaluating $\bz$ at $s=\hat s$.
Similarly, taking $\hat s$-derivatives of
\begin{equation*}
\mbi{z} (s) = e^{(s - \hat{s}) \msf{A}} \left( \hat{\mbi{z}} + \int_{\hat{s}}^{s} e^{-(\ta
    - \hat{s}) \msf{A}} \mbi{h}_{1} (\ta, \mbi{z} (\ta)) \, d \ta \right)
\end{equation*}
yields first, analogously to how the identities below \eqref{e:int_repr} were obtained,
\begin{align*}
\partial_{\hat s}\bz(\hat s) & =-\msf{A}\hat \bz-\bh_1(\hat s,\hat \bz)  \\
\partial^2_{\hat s}\bz(\hat s) & = \msf{A}^2\hat \bz + \msf{A}\bh_1(\hat s, \hat \bz) - \partial_s\bh_1(\hat s, \hat \bz) - D\bh_1(\hat s,\hat \bz) \partial_{\hat s} \bz(\hat s)  \\
\partial^3_{\hat s}\bz(\hat s) & = -\msf{A}^3\hat \bz - \msf{A}^2\bh_1(\hat s, \hat \bz) + 2\msf{A} \partial_s\bh_1(\hat s, \hat \bz) - \partial_s^2\bh_1(\hat s, \hat \bz) + \msf{A} D\bh_1(\hat s,\hat \bz) \partial_{\hat s} \bz(\hat s) \\
& \qquad - D(\partial_s\bh_1)(\hat s,\hat \bz) \partial_{\hat s} \bz(\hat s)  - D\bh_1(\hat s,\hat \bz) \tdf{}{\hat s}\partial_{\hat s}\bz(\hat s)   - D^2\bh_1(\hat s,\hat \bz) (\partial_{\hat s} \bz(\hat s),\partial_{\hat s} \bz(\hat s)) \notag \\
& \qquad - D\bh_1(\hat s,\hat \bz)  \partial^2_{\hat s} \bz(\hat s). \notag
\end{align*}
These formulas can then be differentiated with respect to $\hat \bz$ in order to find higher-order initial conditions. As $\bh_1(s,\bz)=\mcal{O}(\norm{\bz}^2)$, we obtain the following estimates:
\begin{align*}
\norm{\partial^l_{\hat s} \bz(\hat s)} & \leqs C\norm{\hat \bz} \qquad l=1,2,3\\
\norm{\partial^l_{\hat s}\hat \partial_{i}\bz(\hat s)} & \leqs C\phantom{\norm{\hat \bz}} \qquad l=1,2\\
\norm{\partial_{\hat s}\hat \partial_{ij}\bz(\hat s)} & \leqs C .
\end{align*}

Combining \eqref{e:nf_relax_z} and \eqref{e:radial_ode},
\begin{equation*}
\tdf{}{s}\norm{\mbi{z}} = \frac{\bz\cdot \msf{A}\bz+\bz\cdot \mbi{h}_{1} (s, \mbi{z})}{\norm{\bz}} \leqs \lambda_1\norm{\bz} + \norm{\mbi{h}_1 (s, \mbi{z})} \leqs \frac{\lambda_1}{2} \norm{\bz}
\end{equation*}
if $\norm{\bz}/|\lambda_1|$ is small enough.  Below, $\norm{\hat \bz}/|\lambda_1|$ will always be assumed small enough. Thus, for all $s> \hat s$,
\begin{equation}\label{e:z_relax_decay}
\norm{\bz(s)}\leqs \norm{\hat\bz} e^{\frac{\lambda_1}{2}(s-\hat s)}.
\end{equation}

Differentiating \eqref{e:nf_relax_z} with respect to various components of $\hat\bz$, we obtain the variational equations
\begin{align}
\tdf{}{s} \hat\partial_i\mbi{z} &= \left(\msf{A}+ D\mbi{h}_{1} (s, \mbi{z})\right)\hat \partial_i \mbi{z} \label{e:z_rel_var1} \\
\tdf{}{s} \hat\partial_{ij}\mbi{z} &= \left( \msf{A} + D\mbi{h}_{1} (s, \mbi{z})\right)\hat\partial_{ij} \mbi{z} +  D^2\mbi{h}_{1} (s, \mbi{z})( \hat\partial_i \mbi{z} , \hat\partial_j \mbi{z} ) \label{e:z_rel_var2}\\
\tdf{}{s} \hat\partial_{ijk}\mbi{z} &= \left( \msf{A} + D\mbi{h}_{1} (s, \mbi{z})\right)\hat\partial_{ijk} \mbi{z} +  D^2\mbi{h}_{1} (s, \mbi{z})( \hat\partial_i \mbi{z} , \hat\partial_{jk} \mbi{z} ) \label{e:z_rel_var3}\\
&\qquad  +  D^2\mbi{h}_{1} (s, \mbi{z})( \hat\partial_{k} \mbi{z} , \hat\partial_{ij} \mbi{z} ) +  D^2\mbi{h}_{1} (s, \mbi{z})( \hat\partial_j \mbi{z} , \hat\partial_{ki} \mbi{z} ) + D^3\mbi{h}_{1} (s, \mbi{z})( \hat\partial_i \mbi{z} , \hat\partial_k \mbi{z} , \hat\partial_{ij} \mbi{z} ). \notag
\end{align}

Combining \eqref{e:z_rel_var1}, \eqref{e:radial_ode}, and \eqref{e:z_relax_decay}, we have
\begin{equation}\label{e:dz_relax_decay}
\norm{\hat\partial_i\bz(s)}\leqs e^{\frac{\lambda_1}{2}(s-\hat s)}
\end{equation}
in analogy with \eqref{e:z_relax_decay}. Combining \eqref{e:z_rel_var2}, \eqref{e:radial_ode}, \eqref{e:z_relax_decay}, and \eqref{e:dz_relax_decay},
\begin{equation*}
\tdf{}{s}\norm{\hat\partial_{ij}\mbi{z}}  \leqs \frac{\lambda_1}{2} \norm{\hat\partial_{ij}\bz} + C\norm{\hat\partial_i\bz}\norm{\hat\partial_j\bz} \leqs \frac{\lambda_1}{2} \norm{\hat\partial_{ij}\bz} + Ce^{\lambda_1(s-\hat s)}.
\end{equation*}
Applying Corollary~\ref{c:gronwall},
\begin{equation}\label{e:ddz_relax_decay}
\norm{\hat\partial_{ij}\mbi{z}(s)}  \leqs \frac{C}{|\lambda_1|} e^{\frac{\lambda_1}{2}(s-\hat s)}.
\end{equation}
Similarly, combining \eqref{e:z_rel_var3}, \eqref{e:radial_ode}, \eqref{e:z_relax_decay}, \eqref{e:dz_relax_decay}, and \eqref{e:ddz_relax_decay},
\begin{equation}\label{e:dddz_relax_decay}
\norm{\hat\partial_{ijk}\mbi{z}(s)}  \leqs \frac{C}{|\lambda_1|^2} e^{\frac{\lambda_1}{2}(s-\hat s)}.
\end{equation}

Differentiating equations \eqref{e:nf_relax_z}, \eqref{e:z_rel_var1}, and
\eqref{e:z_rel_var2} with respect to $\hat s$ produces equations for $\partial^l_{\hat s}
\bz$, $ \partial^l_{\hat s}\hat \partial_{i}\bz $, and $\partial_{\hat s}\hat \partial_{ij}\bz$. For example,
\begin{align*}
\frac{d}{ds}\partial_{\hat s}\bz & = \left(\msf{A}+D\bh_1(s,\bz)\right)\partial_{\hat s}\bz
\\
\frac{d}{ds}\partial^2_{\hat s}\bz & = \left(\msf{A}+D\bh_1(s,\bz)\right)\partial^2_{\hat s}\bz + D^2\bh_1(s,\bz)(\partial_{\hat s}\bz,\partial_{\hat s}\bz).
\end{align*}
Such equations can be handled in a similar fashion and it is easy to verify that the
additional $\hat s$-derivatives do not change the bounds by more than a constant
prefactor. The bounds with $m=0$ in the proposition are now clear. The bounds with $m\neq 0$ follow immediately from the appropriate differential equation; for instance, bounding the right-hand side of \eqref{e:z_rel_var1} yields the bound on $\frac{d}{ds}\hat\partial_i\bz$.
\end{proof}


\begin{proof}[Proof of Proposition \ref{p:e_bound}]
We first view the error terms $\mscr{E}_k$ with $1\leqs k\leqs 3$ as smooth functions of $(\hat s, \hat \bz)$ and bound their derivatives with respect to these variables. The bounds on the $C^3$-norms with respect to $s_0$ follow by the chain rule. Bounding the $C^3$-norms of $\mscr{E}_4$ and $\mscr{E}_5$ is trivial and is done at the end of the proof. Throughout the proof, $\norm{\,\cdot\,}_{C^3}$ will stand for the $C^3$-norm with respect to $s_0$.

\subsection*{Terms $\mscr{E}_{1}$ and $\mscr{E}_{2}$}
It is convenient to express $\mscr{E}_{1}$ and $\mscr{E}_{2}$ in the form
\begin{align*}
\mscr{E}_{1} 
 & = \int_{0}^{\infty} \mbi{b}_{1} (\hat s+\ta)^{\msf{T}} \msf{P} (\hat s+\ta) \int_{0}^{\ta} e^{(\tau-\xi) \msf{A}} \mbi{h}_{1} (\hat s+\xi, \mbi{z} (\hat s+\xi)) \, d \xi \, d \ta \\
\mscr{E}_{2} 
& = \int_{0}^{\infty} h_{2} (\hat s + \ta, \mbi{z} (\hat s + \ta)) \, d \ta .
\end{align*}
First of all, $\norm{(\mbi{b}_1^\msf{T} \msf{P})(\hat s+\tau)}_{C^3}\leqs C\norm{\mbi{b}_1^\msf{T} \msf{P}}_{C^3}$ for every $\tau$, because $\norm{\hat s-s_0}_{C^3}\leqs C$, so that
\begin{align*}
\norm{\mscr{E}_1}_{C^3} &\leqs C\norm{\mbi{b}_1^\msf{T} \msf{P}}_{C^3} \int_{0}^{\infty}  \int_{0}^{\ta} e^{\lambda_1(\tau-\xi)} \norm{\mbi{h}_{1} (\hat s+\xi, \mbi{z} (\hat s+\xi))}_{C^3} \, d \xi \, d \ta \\
\norm{\mscr{E}_2}_{C^3} &\leqs \int_{0}^{\infty} \norm{h_{2} (\hat s + \ta, \mbi{z} (\hat s + \ta))}_{C^3} \, d \ta . 
\end{align*}
Since $\bh_1(s,\bz)$ and $h_2(s,\bz)$ are periodic in the variable $s$, their partial
derivatives of any order (less than four) with respect to $s$ are periodic functions of
$s$ and can be bounded exactly as $\bh_1(s,\bz)$ and $h_2(s,\bz)$. To save a considerable
amount of space, we write $\eta=\hat s+\xi$ and $\bze=(\hat s+\xi,\bz(\hat s+\xi))$ below.

Notice that the first three total $\hat s$-derivatives of $\bz(\eta)$ are
\begin{align*}
\frac{d}{d\hat s}\bz(\eta) &= \partial_s\bz(\eta) + \partial_{\hat s}\bz(\eta) \\
\frac{d^2}{d\hat s^2}\bz(\eta) &= \partial_s^2\bz(\eta) + 2\partial_s\partial_{\hat s} \bz(\eta) + \partial^2_{\hat s} \bz(\eta)
\\
\frac{d^3}{d\hat s^3}\bz(\eta) &= \partial_s^3\bz(\eta) + 3\partial_s^2\partial_{\hat s} \bz(\eta) + 3\partial_s\partial^2_{\hat s} \bz(\eta) + \partial^3_{\hat s} \bz(\eta).
\end{align*}
Taking $\hat\bz$-derivatives of the first two formulas above, 
\begin{align*}
\frac{d}{d\hat s}\hat\partial_i\bz(\eta) &=  \partial_s\hat\partial_i\bz(\eta) + \partial_{\hat s}\hat\partial_i\bz(\eta) \\ 
\frac{d}{d\hat s}\hat\partial_{ij}\bz(\eta) &=  \partial_s\hat\partial_{ij}\bz(\eta) + \partial_{\hat s}\hat\partial_{ij}\bz(\eta) \\
\frac{d^2}{d\hat s^2}\hat\partial_i\bz(\eta) &= \partial_s^2\hat\partial_i\bz(\eta) + \partial_s\partial_{\hat s} \hat\partial_i\bz(\eta) + \partial^2_{\hat s} \hat\partial_i\bz(\eta).
\end{align*}
Proposition~\ref{p:derivative_bounds} then implies the bounds for $0\leqs k+l\leqs 3$:
\begin{align*}
\left\|\frac{d^l}{d\hat s^l}\bz(\eta)\right\| & \leqs C\norm{\hat \bz} e^{\frac{\lambda_1}{2}\xi}\\
\left\|\frac{d^l}{d \hat s^l}\hat\partial_{i_1 \dots i_k}\bz(\eta)\right\| & \leqs \frac{C}{|\lambda_1|^{k-1}} e^{\frac{\lambda_1}{2}\xi} \qquad(k>0).
\end{align*}
These will be used to bound the $C^3$-norm of $\bh_1(\bze)$. To this end, we compute
\begin{align*}
\frac{d}{d\hat s}\mbi{h}_{1} (\bze) &= \partial_{s}\mbi{h}_{1} (\bze) + D\mbi{h}_{1} (\bze)\frac{d}{d\hat s}\bz(\eta)
\\
& = \mcal{O}\!\left(\norm{\hat \bz}^2 e^{\lambda_1\xi}\right) 
\\
\frac{d^2}{d\hat s^2}\mbi{h}_{1} (\bze) &= \partial^2_{s}\mbi{h}_{1} (\bze) + 2D(\partial_s\mbi{h}_{1}) (\bze)\frac{d}{d\hat s}\bz(\eta) + D\mbi{h}_{1} (\bze)\frac{d^2}{d\hat s^2}\bz(\eta) + D^2\mbi{h}_{1} (\bze)\left(\frac{d}{d\hat s}\bz(\eta),\frac{d}{d\hat s}\bz(\eta)\right)
\\
& = \mcal{O}\!\left(\norm{\hat \bz}^2 e^{\lambda_1\xi}\right)  
\\
\frac{d^3}{d\hat s^3}\mbi{h}_{1} (\bze) &= 
\partial^3_{s}\mbi{h}_{1} (\bze) + D(\partial^2_s\mbi{h}_{1}) (\bze)\frac{d}{d\hat s}\bz(\eta)
+ 2D(\partial_s^2\mbi{h}_{1}) (\bze)\frac{d}{d\hat s}\bz(\eta) + 3D(\partial_s\mbi{h}_{1}) (\bze)\frac{d^2}{d\hat s^2}\bz(\eta) 
\\ & \quad 
+ 3D^2(\partial_s\mbi{h}_{1}) (\bze)\left(\frac{d}{d\hat s}\bz(\eta),\frac{d}{d\hat s}\bz(\eta)\right) +  D\mbi{h}_{1} (\bze)\frac{d^3}{d\hat s^3}\bz(\eta) 
+ 3D^2\mbi{h}_{1} (\bze)\left(\frac{d}{d\hat s}\bz(\eta),\frac{d^2}{d\hat s^2}\bz(\eta)\right) 
\\ & \quad 
+ D^3\mbi{h}_{1} (\bze)\left(\frac{d}{d\hat s}\bz(\eta),\frac{d}{d\hat s}\bz(\eta),\frac{d}{d\hat s}\bz(\eta)\right)  .
\\
& = \mcal{O}\!\left(\norm{\hat \bz}^2 e^{\lambda_1\xi}\right)  
\end{align*}

\begin{align*}
\frac{d}{d\hat z_i}\mbi{h}_{1} (\bze) &= D\mbi{h}_{1} (\bze) \hat\partial_i \mbi{z} (\eta) 
\\
& = \mcal{O}\!\left(\norm{\hat \bz} e^{\lambda_1\xi}\right)  
\\
\frac{d^2}{d\hat z_id\hat z_j}\mbi{h}_{1} (\bze) &=  D\mbi{h}_{1} (\bze) \hat\partial_{ij} \mbi{z} (\eta) + D^2\mbi{h}_{1} (\bze) (\hat\partial_i \mbi{z} (\eta),\hat\partial_j \mbi{z} (\eta)) 
\\
& = \mcal{O}\!\left(\left(\frac{\norm{\hat \bz}}{|\lambda_1|}+1\right) e^{\lambda_1\xi}\right)  = \mcal{O}\!\left( e^{\lambda_1\xi}\right) 
\\
\frac{d^3}{d\hat z_id\hat z_jd\hat z_k}\mbi{h}_{1} (\bze) &= D\mbi{h}_{1} (\bze) \hat\partial_{ijk} \mbi{z} (\eta) +  D^2\mbi{h}_{1} (\bze)( \hat\partial_i \mbi{z}(\eta) , \hat\partial_{jk} \mbi{z}(\eta) )   +  D^2\mbi{h}_{1} (\bze)( \hat\partial_{k} \mbi{z}(\eta) , \hat\partial_{ij} \mbi{z}(\eta) ) 
\\
&\qquad +  D^2\mbi{h}_{1} (\bze)( \hat\partial_j \mbi{z}(\eta) , \hat\partial_{ki} \mbi{z}(\eta) ) + D^3\mbi{h}_{1} (\bze)( \hat\partial_i \mbi{z}(\eta) , \hat\partial_k \mbi{z}(\eta) , \hat\partial_{ij} \mbi{z}(\eta) ).
\\
& = \mcal{O}\!\left(\left(\frac{\norm{\hat \bz}}{|\lambda_1|^2}+\frac{1}{|\lambda_1|}\right) e^{\lambda_1\xi}\right) =  \mcal{O}\!\left(\frac{1}{|\lambda_1|}e^{\lambda_1\xi}\right)  
\end{align*}
Taking $\hat z$-derivatives of $\frac{d}{d\hat s}\bh_1(\bze)$, $\frac{d^2}{d\hat
  s^2}\bh_1(\bze)$, and the resulting expression for $\frac{d^2}{d\hat z_id\hat
  s}\bh_1(\bze)$, we get
\begin{align*}
\frac{d^2}{d\hat z_id\hat s}\mbi{h}_{1} (\bze) &= D(\partial_s\mbi{h}_{1}) (\bze) \hat\partial_i \mbi{z} (\eta) + D\mbi{h}_{1} (\bze)\frac{d}{d\hat s}\hat\partial_i \mbi{z} (\eta) + D^2\mbi{h}_{1} (\bze)\left(\frac{d}{d\hat s} \mbi{z} (\eta),\hat\partial_i \mbi{z} (\eta)\right)
\\
& =  \mcal{O}\!\left(\norm{\hat\bz}e^{\lambda_1\xi}\right)  
\\
\frac{d^3}{d\hat z_id\hat s^2}\mbi{h}_{1} (\bze) &= D(\partial_s^2\mbi{h}_{1}) (\bze) \hat\partial_i \mbi{z} (\eta) + 2D(\partial_s\mbi{h}_{1}) (\bze)\frac{d}{d\hat s}\hat\partial_i \mbi{z} (\eta) + 2D^2(\partial_s\mbi{h}_{1}) (\bze)\left(\frac{d}{d\hat s} \mbi{z} (\eta),\hat\partial_i \mbi{z} (\eta)\right)
\\
& \quad + D\mbi{h}_{1} (\bze)\frac{d^2}{d\hat s^2}\hat\partial_i \mbi{z} (\eta) + D^2\mbi{h}_{1} (\bze)\left(\frac{d^2}{d\hat s^2} \mbi{z} (\eta),\hat\partial_i \mbi{z} (\eta)\right)
\\
& \quad + 2D^2\mbi{h}_{1} (\bze)\left(\frac{d}{d\hat s} \hat \partial_i\bz(\eta),\frac{d}{d\hat s}\bz(\eta)\right)+  D^3\mbi{h}_{1} (\bze)\left(\hat \partial_i \bz(\eta),\frac{d}{d\hat s}\bz(\eta),\frac{d}{d\hat s}\bz(\eta)\right)
\\
& =  \mcal{O}\!\left(\norm{\hat\bz}e^{\lambda_1\xi}\right)  
\\
\frac{d^3}{d\hat z_id\hat z_j d\hat s}\mbi{h}_{1} (\bze) &=  D(\partial_s\mbi{h}_{1}) (\bze) \hat\partial_{ij} \mbi{z} (\eta) + D^2(\partial_s \mbi{h}_{1}) (\bze)\left(\hat\partial_i \mbi{z} (\eta),\hat\partial_j \mbi{z} (\eta)\right) + D\mbi{h}_{1} (\bze)\frac{d}{d\hat s}\hat\partial_{ij} \mbi{z} (\eta)
\\
& \quad + D^2\mbi{h}_{1} (\bze)\left(\frac{d}{d\hat s} \hat \partial_i\bz(\eta),\hat\partial_j\bz(\eta)\right) + D^2\mbi{h}_{1} (\bze)\left(\frac{d}{d\hat s} \hat \partial_j\bz(\eta),\hat\partial_i\bz(\eta)\right)
\\
& \quad + D^2\mbi{h}_{1} (\bze)\left(\frac{d}{d\hat s} \bz(\eta),\hat\partial_{ij}\bz(\eta)\right) +  D^3\mbi{h}_{1} (\bze)\left(\frac{d}{d\hat s}\bz(\eta),\hat \partial_i \bz(\eta),\hat \partial_j \bz(\eta)\right).
\\
& = \mcal{O}\!\left(\left(\frac{\norm{\hat \bz}}{|\lambda_1|}+1\right) e^{\lambda_1\xi}\right) =  \mcal{O}\!\left(e^{\lambda_1\xi}\right)  
\end{align*}

We bound the derivatives of $h_{2} (\hat s + \ta, \mbi{z} (\hat s + \ta))$ in exactly the same way.

\subsection*{Term $\mscr{E}_{3}$}
Setting $\mbi{v}(\tau)=\mbi{b}_{1} (\ta)^{\msf{T}} \msf{P} (\ta) - \bsym{\Si}$,  we have $\mscr{E}_{3} = \hat{\mbi{z}} \cdot \int_{\hat{s}}^{\infty} \mbi{v}(\tau) e^{(\ta - \hat{s}) \msf{A}} \, d \ta$.
Using the facts that $\mbi{v}$ is 2-periodic, that its integral vanishes, and that $\msf{A}$ is negative definite,
\begin{equation*}
\begin{split}
\int_{\hat{s}}^{\infty} \mbi{v}(\tau) e^{(\ta - \hat{s}) \msf{A}} \, d \ta & =
\int_{0}^{\infty} \mbi{v}(\hat s + \tau) e^{\tau \msf{A}} \, d \ta = \sum_{k=0}^\infty
\left( \int_{0}^{2} \mbi{v}(\hat s + \tau) e^{\tau \msf{A}} \, d \ta \right) e^{2k\msf{A}}\\
&=  \left( \int_{0}^{2} \mbi{v}(\hat s + \tau) e^{\tau \msf{A}} \, d \ta \right) \left(\msf{1}-e^{2\msf{A}}\right)^{-1} 
\\
&=  \left( \int_{0}^{2} \mbi{v}(\hat s + \tau) \left(e^{\tau \msf{A}} -\msf{1}\right) \,
  d \ta \right) \left(\msf{1}-e^{2\msf{A}}\right)^{-1} .
\end{split}
\end{equation*}
Hence,
\begin{equation*}
\frac{d^k}{d\hat s^k} \int_{\hat{s}}^{\infty} \mbi{v}(\tau) e^{(\ta - \hat{s}) \msf{A}} \,
d \ta  = \left( \int_{0}^{2} \mbi{v}^{(k)}(\hat s + \tau) \left(e^{\tau \msf{A}}
    -\msf{1}\right)\, d \ta \right) \left(\msf{1}-e^{2\msf{A}}\right)^{-1} .
\end{equation*}
Recalling that $\msf{A}$ is diagonal, we obtain for each value of $k$ the upper bound
\begin{equation} \label{e:per_int_bound} \left| \frac{d^k}{d\hat
      s^k}\int_{\hat{s}}^{\infty}\left( \mbi{v}(\tau) e^{(\ta - \hat{s}) \msf{A}}
    \right)_i \, d \ta \right| = \left\|v_i^{(k)}\right\|_\infty \left( \int_0^2
  \left|e^{\tau\lambda_i}-1 \right|\,d\tau \right) (1-e^{2\lambda_i})^{-1} \leqs
  C\left\|\mbi{v}^{(k)}\right\|_\infty.
\end{equation}

\subsection*{Incorporating $(s_0,\bz_0=\mbi{0})\mapsto (\hat s,\hat \bz)$}
We set  $\bz_0 = \mbi{0}$ and denote $(\hat s,\hat \bz)=H_{k} (s_{0}, \mbi{0})$.

As $\hat\bz=\bz(\hat s)=\bz(\hat s(s_0),s_0,\bzero)$, we have $\frac{d^k\hat \bz}{ds_0^k} = \frac{d^k}{ds_0^k}\bz(\hat s(s_0),s_0,\bzero)$. The bounds
\begin{equation*}
\left\|\frac{d^k\hat \bz}{ds_0^k}\right\|\ \leqs C\ve \qquad (1\leqs k\leqs 3)
\end{equation*}
follow from the fact  that $\hat s$ is a $C^3$-function of $s_0$ and the bounds in \eqref{e:z_kick_der}.

Since $\hat s$ and $\hat \bz$ are functions of $s_0$, for any function $u=u(\hat s,\hat\bz)$,
\begin{align*}
\frac{d}{ds_0} u &= \frac{d\hat s}{ds_0}\partial_{\hat s}u + \frac{d\hat z_i}{ds_0} \partial_{\hat z_i}u
\\
\frac{d^2}{ds_0^2} u &=  \frac{d^2\hat s}{ds_0^2}\partial_{\hat s}u + \frac{d^2\hat z_i}{ds_0^2} \partial_{\hat z_i}u + \left(\frac{d\hat s}{ds_0}\right)^2\partial_{\hat s \hat s}u + 2\frac{d\hat s}{ds_0} \frac{d\hat z_i}{ds_0}\partial_{\hat s \hat z_i}u+ \frac{d\hat z_i}{ds_0}\frac{d\hat z_j}{ds_0} \partial_{\hat z_i \hat z_j}u
\\ 
\frac{d^3}{ds_0^3} u &=   \frac{d^3\hat s}{ds_0^3}\partial_{\hat s}u + \frac{d^3\hat z_i}{ds_0^3} \partial_{\hat z_i}u + 2\frac{d\hat s}{ds_0}\frac{d^2\hat s}{ds_0^2}\partial_{\hat s \hat s}u + 2\left(\frac{d^2\hat s}{ds_0^2} \frac{d\hat z_i}{ds_0}+\frac{d\hat s}{ds_0} \frac{d^2\hat z_i}{ds_0^2}\right)\partial_{\hat s \hat z_i}u+ 2\frac{d^2\hat z_i}{ds_0^2}\frac{d\hat z_j}{ds_0} \partial_{\hat z_i \hat z_j}u 
\\ 
& \quad + \frac{d^2\hat s}{ds_0^2} \frac{d}{ds_0}\partial_{\hat s}u + \frac{d^2\hat z_i}{ds_0^2} \frac{d}{ds_0}\partial_{\hat z_i}u + \left(\frac{d\hat s}{ds_0}\right)^2\frac{d}{ds_0}\partial_{\hat s \hat s}u + 2\frac{d\hat s}{ds_0} \frac{d\hat z_i}{ds_0}\frac{d}{ds_0}\partial_{\hat s \hat z_i}u+ \frac{d\hat z_i}{ds_0}\frac{d\hat z_j}{ds_0}\frac{d}{ds_0} \partial_{\hat z_i \hat z_j}u.
\end{align*}
Here summation over repeated indices is understood and we leave it to the reader to expand the remaining $s_0$-derivatives on the last line. Using the bounds derived earlier, we then get
\begin{align*}
\left\|\bh_1(\bze)\right\| & \leqs C\norm{\hat \bz}^2 e^{\lambda_1\xi} \\
\left\|\frac{d}{ds_0}\bh_1(\bze)\right\| & \leqs C\left(\norm{\hat \bz}^2 + \ve\norm{\hat \bz}\right)e^{\lambda_1\xi} \\
\left\|\frac{d^2}{ds_0^2}\bh_1(\bze)\right\| & \leqs C\left(\norm{\hat \bz}^2 + \ve\norm{\hat \bz} + \ve^2\right) e^{\lambda_1\xi}  \\
\left\|\frac{d^3}{ds_0^3}\bh_1(\bze)\right\| & \leqs C\left(\norm{\hat \bz}^2 + \ve\norm{\hat \bz} + \ve^2+\frac{\ve^3}{|\lambda_1|}\right) e^{\lambda_1\xi}.
\end{align*}
Similar bounds are obtained for $h_2(\bze)$.  We conclude that
\begin{align*}
\norm{\mscr{E}_1}_{C^3} &\leqs C\norm{\mbi{b}_1^\msf{T}
  \msf{P}}_{C^3}\frac{\ve^2}{|\lambda_1|^2} \leqs C \si \frac{\ve^2}{|\lambda_1|^2} \\
\norm{\mscr{E}_2}_{C^3} &\leqs C\frac{\ve^2}{|\lambda_1|} \\
\norm{\mscr{E}_3}_{C^3} &\leqs C\ve \si.
\end{align*}
The final inequality involving $\mscr{E}_{1}$ holds because $\frac{\mbi{b}_{1}^{\msf{T}}
  \msf{P}}{\si}$ is independent of $\si$.

\subsection*{Terms $\mscr{E}_{4}$ and  $\mscr{E}_{5}$}
Writing $\mscr{E}_{4}$ in the form
\begin{equation*}
\mscr{E}_{4} = (\tilde s-\hat s)\int_{0}^{1} b_{0} ((1-\ta)\tilde s+\tau \hat s) \, d \ta
\end{equation*}
and recalling that $\tilde s$ and $\hat s$ are both $C^3$ functions of $s_0$ allows us to estimate
\begin{equation*}
\norm{\mscr{E}_{4}}_{C^3} \leqs C\norm{\tilde s-\hat s}_{C^3} \leqs C\ve.
\end{equation*}
Proposition~\ref{p:f_per_dif} was used here. Finally, by the same proposition,
\begin{equation*}
\norm{\mscr{E}_{5}}_{C^3} =
\norm{\inproa{\bsym{\xi}_{2} (s_{0}, \mbi{0})}{\bar{\mbi{d}}}}_{C^3} \leqs C\ve{\| \bsym{\Si} \|},
\end{equation*}
which finishes the proof.
\end{proof}


\begin{lemma}
For all $s_0$ and $a$, we have $\norm{\partial_{\bz_0}s_{\infty} (s_0, \bzero, a)} > 0$.
\end{lemma}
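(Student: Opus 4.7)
The plan is to differentiate the implicit relation~\eqref{e:s_infty_1} defining $s_\infty$ with respect to $\bz_0$ and show that one specific term dominates the resulting expression. Since $\hat t(a)$ is independent of $\bz_0$ and $b_0$ is bounded below by a positive constant, implicit differentiation gives
\begin{equation*}
b_0(s_\infty)\,\partial_{\bz_0}s_\infty = b_0(\hat s)\,\partial_{\bz_0}\hat s - \partial_{\bz_0}\bigl[\inproa{\hat{\mbi{z}}}{\mbi{I}(\hat s)}\bigr] - \partial_{\bz_0}\bigl[E_1(\infty)+E_2(\infty)\bigr],
\end{equation*}
where $\mbi{I}(\hat s) = \int_{\hat s}^\infty \mbi{b}_1(\tau)^\msf{T}\msf{P}(\tau)\,e^{(\tau-\hat s)\msf{A}}\,d\tau$. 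The goal is to isolate the dominant contribution and bound the rest using the derivative and decay estimates already established.

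First I would identify the leading piece using the decomposition $\mbi{I}(\hat s) = \bar{\mbi{d}} + \mbi{R}(\hat s)$ from~\eqref{e:mean_ext}: the key term is $(\partial_{\bz_0}\hat{\mbi{z}})^\msf{T}\bar{\mbi{d}}$, and by~\eqref{e:hat_z_der_z0} this equals $\bar{\mbi{d}} + \mcal{O}(|\la_1|\,\|\bar{\mbi{d}}\|)$. Since $\bar{\mbi{d}} = (\si/|\la_1|)\,\mbi{d}$ with components $\mbi{d}_i = \Si_i\mu_i/\si$ and $\mu_1 = 1$, we have $\|\mbi{d}\| \geqs |\mu_{n-1}| > 0$, so $\|\bar{\mbi{d}}\| \geqs |\mu_{n-1}|\,\si/|\la_1|$, which is large under the hyperbolicity hypotheses of Theorem~\ref{mt:main_1}.

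Next I would bound the remaining terms. The contribution $b_0(\hat s)\,\partial_{\bz_0}\hat s$ is $\mcal{O}(1)$ by~\eqref{e:hat_s_der_z0}. Evaluated at $\bz_0 = \bzero$, $\hat{\mbi{z}} = \mcal{O}(\ve)$ from~\eqref{e:int_nf_l} and Proposition~\ref{p:f_per_dif}, so the cross terms $(\partial_{\bz_0}\hat{\mbi{z}})^\msf{T}\mbi{R}(\hat s)$ and $\inproa{\hat{\mbi{z}}}{\partial_{\bz_0}\mbi{R}(\hat s)}$ are of order $\si$ and $\ve\si$ respectively. The remaining contributions $\partial_{\bz_0}E_k(\infty)$ for $k=1,2$ I would estimate by differentiating under the integral sign and invoking Proposition~\ref{p:derivative_bounds} together with a chain-rule step through $\partial_{\bz_0}\hat s$ and $\partial_{\bz_0}\hat{\mbi{z}}$: this replays the argument of Proposition~\ref{p:e_bound} with one extra differentiation, yielding errors of size $\si\ve/|\la_1|$ times small factors.

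Assembling everything, $\|\partial_{\bz_0}s_\infty(s_0,\bzero,a)\| \geqs c\,\si/|\la_1|$ for a positive constant $c$ uniform in $(s_0,a)$, which proves the lemma. The hard part will be the careful $\bz_0$-differentiation of $E_1(\infty)$ and $E_2(\infty)$, since it forces a repetition of the lengthy derivative estimate in the proof of Proposition~\ref{p:e_bound} with one additional differentiation through $\partial_{\bz_0}\hat s$ and $\partial_{\bz_0}\hat{\mbi{z}}$; however, no new ideas beyond those estimates and the bounds~\eqref{e:hat_s_der_z0}--\eqref{e:hat_z_der_z0} are required.
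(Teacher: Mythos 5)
Your proposal is correct and follows essentially the same route as the paper's own proof: differentiate the implicit relation~\eqref{e:s_infty_1} in $\bz_0$, isolate the dominant term $\inproa{\bar{\mbi{d}}}{\partial_{\bz_0}\hat{\mbi{z}}} = \bar{\mbi{d}} + \mcal{O}(|\la_1| \|\bar{\mbi{d}}\|)$ via~\eqref{e:hat_z_der_z0}, and show the remaining contributions ($b_0(\hat s)\partial_{\bz_0}\hat s$, the cross terms with $\mbi{R}(\hat s)$, and $\partial_{\bz_0}\mscr{E}_1$, $\partial_{\bz_0}\mscr{E}_2$) are of lower order $\mcal{O}(\si) + \mcal{O}(\si\ve/|\la_1|^2)$ relative to $\|\bar{\mbi{d}}\| \geqs \mu_{n-1}\si/|\la_1|$. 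The explicit lower bound $\|\mbi{d}\| \geqs \mu_{n-1}$ you supply is a small but welcome sharpening over the paper's terser statement that the leading term dominates.
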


\begin{proof}
Differentiating both sides of \eqref{e:s_infty_1} with respect to $\bz_0$, we get
\begin{equation}\label{e:s_infty_init_der_1}
0=b_0(s_\infty) \partial_{\bz_0}s_\infty+ \vinproa{\bsym{\Sigma} \left( \int_{0}^{\infty}
    e^{\ta \msf{A}} \, d \ta \right)}{\partial_{\bz_0}\hat\bz} + \mbi{R},
\end{equation}
where
\begin{equation*}
\begin{split}
  \mbi{R} &= -b_0(\hat s)\partial_{\bz_0}\hat s+ \vinproa{\left(
      \int_{0}^{\infty}(\mbi{b}_{1}^{\msf{T}} \msf{P}-\bsym{\Sigma}) (\hat s+\ta) e^{\ta
        \msf{A}} \, d \ta \right)}{\partial_{\bz_0}\hat\bz}\\
  &\qquad {}+ \vinproa{\hat{\mbi{z}}}{\left( \int_{0}^{\infty} (\mbi{b}_{1}^{\msf{T}}\msf{P})' (\hat
      s+\ta) e^{\ta \msf{A}} \, d \ta \right) (\partial_{\bz_0}\hat s)} +
  \sum_{k=1}^{2} \partial_{\bz_0} \mscr{E}_{k}.
\end{split}
\end{equation*}
Because \eqref{e:per_int_bound} holds for any periodic, zero-integral function, the two
integrals appearing in $\mbi{R}$ are $\mcal{O} (\si)$ in the limit $\lambda_1\to 0$.
Terms $\partial_{\bz_0}\hat s$ and $\partial_{\bz_0}\hat\bz$ are bounded by
\eqref{e:hat_s_der_z0} and \eqref{e:hat_z_der_z0}, respectively.  Estimating
$\pdop{\mbi{z}_{0}} \mscr{E}_{1}$ and $\pdop{\mbi{z}_{0}} \mscr{E}_{2}$, we conclude that
\begin{equation*}
\norm{\mbi{R}} = \mcal{O} (\si) + \mcal{O} \left( \frac{\si \ve}{\abs{\la_{1}}^{2}}
\right).
\end{equation*}
>From \eqref{e:s_infty_init_der_1} and
\eqref{e:hat_z_der_z0}, we have
\begin{equation}\label{e:s_infty_init_der_2} 
\partial_{\bz_0}s_\infty = \frac{1}{b_0(s_\infty)} \left[ \bsym{\Sigma} 
\msf{A}^{-1}(\msf{1}+\mcal{O}(\lambda_1)) +\mcal{O}(\si) + \mcal{O} \left( \frac{\si
    \ve}{\abs{\la_{1}}^{2}} \right) \right]
\end{equation}
as $\lambda_1\to 0$. Since $\bsym{\Sigma} \msf{A}^{-1} =
(-\Sigma_i\lambda_i^{-1})_{i=1}^{n-1}$, if $\frac{\ve}{\abs{\la_{1}}}$ is sufficiently
small, then the first term on the right side of~\eqref{e:s_infty_init_der_2} dominates and
thus $\norm{\pdop{\mbi{z}_{0}} s_{\infty}} > 0$.

\end{proof}

\bibliographystyle{amsplain}
\bibliography{shear_induced_chaos}

\end{document}